\theoremstyle{definition}
\theoremstyle{remark}
\numberwithin{equation}{section}
\def \H{\mathbf{H}}
\def \T{\mathbb{T}}
\def\my_c{c_\infty}
\def \gM{{\mathbf{M}}}
\def \z{{\mathbf{z}}}
\def \k{{\mathbf{k}}}
 \def\bxi{{\boldsymbol{\xi}}}
 \def\btheta{{\boldsymbol{\theta}}}
\def \cR{{\mathcal{R}}}
\def \cG{{\mathcal{G}}}
\def\leftB{[\![}
\def\rightB{]\!]}
\def\I{{\rm{I\!\!I}}}
\def\Z{{{\mathbb Z}}}
\newcommand \A[1]{{\bf (#1)}}
\def\F{{\mathcal F}}
\def\R{{\mathbb{R}} }
\def\N{{\mathbb{N}} }
\def\E{{\mathbb{E}}  }
\def\P{{\mathbb{P}}  }
\def\Q{{\mathbb{Q}}  }
\def\I{{\mathbb{I}}}
\def\det{{\rm{det}}}
\def\tr{{\rm{tr}}}
\def\bint#1^#2{\displaystyle{\int_{#1}^{#2}}}
\def\bsum#1^#2{\displaystyle{\sum_{#1}^{#2}}}
\def\xdt_#1{X_#1(\Delta t)}
\newtheorem{THM}{Theorem}[section]
\newtheorem{PROP}{Proposition}[section]
\newtheorem{LEMME}{Lemma}[section]
\newtheorem{REM}{Remark}[section]
\newcommand{\mysection}{\setcounter{equation}{0} \section}
\def\diag{{{\rm diag}}}
\def\Tr{{{\rm Tr}}}
\def\0{{\mathbf{0}}}
\def\X{{\mathbf{X}}}
\def\bx{{\mathbf{x}}}
\def\x{{\mathbf{x}}}
\def\by{{\mathbf{y}}}
\def\y{{\mathbf{y}}}
\def\m{{\mathbf{m}}}
\def\gF{{\mathbf{F}}}
\def\K{{\mathbf{K}}}
\def\gR{{\mathbf{R}}}
\def\gF{{\mathbf{F}}}
\def\gD{{\mathbf{D}}}
 \def\bphi{{\boldsymbol{\phi}}}
\begin{document}

\title{
Martingale problems for some degenerate Kolmogorov equations}

\author{St\'ephane Menozzi}
\address{Laboratoire de Mod\'elisation Math\'ematique d'Evry (LaMME), Universit\'e d'Evry Val d'Essonne, 23 Boulevard de France 91037 Evry, France and Laboratory of Stochastic Analysis, HSE, Moscow.}
\email{stephane.menozzi@univ-evry.fr}

\subjclass[2000]{Primary 60H10, 60G46; Secondary 60H30, 35K65}

\date{\today}

\keywords{Degenerate SDEs, martingale problem, Calder\'on-Zygmund estimates}

\begin{abstract}
We obtain Calder\'on-Zygmund estimates for some  degenerate equations of Kolmogorov type with inhomogeneous nonlinear coefficients. We then derive the well-posedness of the martingale problem associated with related degenerate operators, and therefore uniqueness in law for the corresponding stochastic differential equations. Some density estimates are established as well. 
\end{abstract}

\maketitle

\mysection{Introduction}
\label{INTRO}
\subsection{Statement of the problem}

 
Consider the following system of Stochastic Differential Equations (SDEs in short)
\begin{equation}
\label{SYST}
\begin{array}{l}
\displaystyle dX_t^1 = F_1(t,X_t^1,\dots,X_t^n) dt + \sigma(t,X_t^1,\dots,X_t^n) dW_t,
\\
\displaystyle dX_t^2 = F_2(t,X_t^1,\dots,X_t^n) dt,
\\
\displaystyle dX_t^3 = F_3(t,X_t^2,\dots,X_t^n) dt,
\\
\displaystyle \cdots
\\
\displaystyle dX_t^n = F_n(t,X_t^{n-1},X_t^n) dt,
\end{array}
\quad t \geq 0,
\end{equation}
$(W_t)_{t \geq 0}$ standing for a $d$-dimensional Brownian motion, and each $(X_t^i)_{t \geq 0}$, $ i \in\leftB 1, n\rightB$, being $\R^d$-valued as well. 

From the applicative viewpoint, systems of type \eqref{SYST} appear in many fields. Let us for instance mention for $n=2$ stochastic Hamiltonian systems (see e.g. Soize \cite{soiz:94} for a general overview or Talay \cite{tala:02} and H\'erau and Nier \cite{hera:nier:04} for convergence to equilibrium). 
Again for $n=2$,  the above dynamics is used in mathematical finance to price Asian options (see for example 
\cite{baru:poli:vesp:01}). For $n \geq 2$, it appears in heat conduction models (see e.g. Eckmann et al. 
\cite{eckm:99} and Rey-Bellet and Thomas \cite{reyb:thom:00} when the chain of differential equations is forced by two heat baths).

Assume first that  the coefficients $(F_i)_{i\in \leftB 1,n\rightB}$ are Lipschitz continuous in space and that the diffusion matrix $a(t,.):=\sigma\sigma^*(t,.) $ is bounded.
If we additionally suppose that 
$a(t,.) $ and $(D_{x_{i-1}}F_i(t,.))_{i\in \leftB 2,n\rightB} $ are non-degenerate (weak H\"ormander condition) and H\"older continuous in space, with respective H\"older exponents in $(1/2,1] $ and $(0,1] $, some multi-scale Gaussian Aronson like estimates have been proved in \cite{dela:meno:10} for the density of \eqref{SYST}  uniformly on the time set $(0,T]$, for fixed $T>0$ (see Example 2 and Theorem 1.1 of that reference). Those results extend to the case of an arbitrary  H\"older exponent in $(0,1] $ for  $a(t,.)$  thanks to uniqueness in law arguments that have been investigated  in \cite{meno:10} through the well posedness of the martingale problem. This result is established exploiting specifically some regularizing effect of an underlying \textit{parametrix} kernel.

Anyhow, when studying the martingale problem, the natural framework is to consider non-degenerate continuous coefficients\footnote{This assumption yields even in the non-degenerate case, estimates in $L^q$ spaces for the density (see e.g. Chapter 9 in \cite{stro:vara:79}), whereas the H\"older continuity gives, still in a weak solution framework, pointwise controls (see e.g. Sheu \cite{sheu:91}).}.
In the special case $n=1$ if $a(t,.)$ is bounded and uniformly elliptic, i.e. \eqref{SYST} corresponds to a non-degenerate SDE, it is well known that the martingale problem associated with the generator $(L_t)_{t\ge 0} $ of \eqref{SYST}  is well posed as soon as the coefficient $F_1$ is bounded measurable 
and that $a(t,.) $ is continuous in space, see e.g. 
Stroock and Varadhan \cite{stro:vara:79}. 

The key ingredient consists in proving Calder\'on-Zygmund estimates.
In that framework, those estimates write as controls in $L^p$ norms of suitable singular integrals related to the Gaussian density of \eqref{SYST} when the diffusion coefficient is constant in space and $F_1=0$. These controls then allow, through an operator inversion, to derive the well posedness of the martingale problem, or from a PDE viewpoint of the Cauchy problem with $L^p$ source term, when the diffusion coefficient does not vary much. The case of a bounded drift can then be handled through a Girsanov transform. Eventually, the well posedness of the martingale problem is established under the sole continuity assumption on $a$ and boundedness on $b$ thanks to a localization procedure.


We refer to the monographs of Stein \cite{stei:70} or Gilbarg and Trudinger \cite{gilb:trud:83} for a presentation of the Calder\'on-Zygmund theory for non-degenerate elliptic equations. In that framework a more probabilistic approach is proposed in Bass \cite{bass:95}. We also mention the monograph of Coifman and Weiss \cite{coif:weis:71}, from which some Calder\'on-Zygmund estimates can be derived in some degenerate frameworks when there is an underlying \textit{homogeneous space}.


In this work, for $n>1$,  under the previous assumptions of non-degeneracy and continuity on $a$ and a weak H\"ormander condition of the $(D_{x_{i-1}}F_i(t,.)_{i\in \leftB 2,n\rightB}) $, we are interested in proving the well-posedness of the martingale problem for the generator $(L_t)_{t\ge 0} $ of \eqref{SYST}.

To achieve this goal, we will establish Calder\'on-Zygmund estimates for a singular Gaussian kernel derived from a suitable linearization of the degenerate system \eqref{SYST}, already used in \cite{dela:meno:10}, \cite{meno:10}. The linearization is here crucial since it provides the \textit{proxy} density on which some good controls can be established. Observe indeed that when  $\sigma $ is constant in space and the coefficients $F$ are linear and  s.t. $(D_{x_{i-1}}F_i)_{i\in\leftB 2,n \rightB} $ satisfy a weak H\"ormander condition, then the SDE has a multi-scale Gaussian density (see the seminal paper of Kolmogorov \cite{kolm:33}, Di Francesco and Polidoro \cite{difr:poli:06}, \cite{dela:meno:10}). Roughly speaking, the non degeneracy of $a$ and the H\"ormander assumption on the drift allow to say that the $i^{{\rm th}}$ component of the SDE \textit{feels} a noise whose typical scale corresponds to the one of the $(i-1)^{{\rm th}} $ iterated integral of the Brownian motion which is $t^{1/2+(i-1)}=t^{(2i-1)/2} $ at time $t$. On the other hand, the Gaussian density exhibits deviations w.r.t. the transport of the initial condition by the deterministic  differential system\footnote{corresponding to \eqref{SYST} when $\sigma=0 $} having unbounded coefficients. The multi-scale Gaussian densities of Kolmogorov type will play here the same role as the standard Gaussian one in the non-degenerate setting of \cite{stro:vara:79}.
%
%
%
%
%
%
%
%
%
%
%
%
%
%

Let us now mention that in the linear case, Calder\'on-Zygmund estimates have been obtained by Bramanti \textit{et al.} in \cite{bram:cupi:lanc:prio:09}, \cite{bram:cupi:lanc:prio:13}. Precisely, they consider an operator 
$${\mathcal A}:=\sum_{i,j=1}^{p_0}a_{ij}\partial_{x_i,x_j}+\bsum{i,j=1}^{N} b_{ij}x_i\partial_{x_j}, $$
where the  matrices $(a_{ij})_{(i,j)\in \leftB 1,p_0\rightB^2} $ are symmetric positive definite, constant in \cite{bram:cupi:lanc:prio:09}, with continuous variable homogeneous coefficients which do not vary much in \cite{bram:cupi:lanc:prio:13}, and the $(b_{ij})_{(i,j)\in \leftB 1,N\rightB^2}$ are s.t. ${\mathcal A} $ is hypoelliptic.
The authors then establish global $L^p$ estimates $p\in (1,+\infty) $ of the following type: $\exists c:=c(a,b,p_0,N,p), \ \forall u\in C_0^2(\R^N) $, 
$$\|\partial_{x_ix_j} u\|_{L^p(\R^N)}\le c\{\|{\mathcal A} u\|_{L^p(\R^N)}+\|u\|_{L^p(\R^N)} \}, \ (i,j)\in \leftB 1,p_0\rightB^2.$$
Weak (1-1) estimates are also obtained. 
It has been proved in \cite{lanc:poli:94} that for some suitable basis, the matrices $b$ have the same form as in \eqref{SYST}, i.e. in our setting the coefficient $F$ would write $F(x)=Bx $ where $B_{i,j}=0_{d\times d}$ for $j<i-1,\ (i,j)\in \leftB 1,n\rightB$.  
Hence, the operator ${\mathcal A}$ can be seen as a particular case of generator associated with \eqref{SYST}. 

The strategy in those works still consists in estimating suitable singular integrals\footnote{precisely second order derivatives w.r.t. to the non degenerate components} related to the Gaussian fundamental solution of $L={\mathcal A}-\partial_t$ that enjoys the previously described properties (multi-scale and unbounded transport). Even in the linear framework, when the matrix $B$ has strictly upper diagonal entries it is not possible to enter the Coifman and Weiss \cite{coif:weis:71} framework of homogeneous spaces. Here, the underlying homogeneous norm would be the one derived from the various time scales of the components, corresponding once again to those of the iterated integrals of the Brownian motion. Namely,
\begin{equation}
\label{NAT_METR}
\forall (s,x_1,\cdots,x_n)\in \R\times \R^{nd},\ \bar \rho(s,x_1,\cdots,x_n):=|s|^{1/2}+\sum_{i=1}^n |x_i|^{1/(2i-1)}.
\end{equation}
Now, for a given $i\in \leftB 1,n-1\rightB$, the entries $B_{i,j}\in \R^{d}\otimes \R^d,\ j\in \leftB i+1, n\rightB $ are associated with components that have negligible time scale, namely $t^{(2j-1)/2} $, w.r.t. to the current one of order $t^{(2i-1)/2}$ in small time. This property has been exploited thoroughly in \cite{dela:meno:10}, \cite{meno:10} and Section \ref{CZ_FORMAL} to derive pointwise estimates, but in the current framework it breaks the global homogeneity in 
\eqref{NAT_METR} when considering, for $(s,x),(t,y)\in \R\times \R^{nd} $,
$\bar {\mathbf d}((s,x),(t,y))=\bar \rho(t-s, x-\exp(-B(t-s)) y) $ which appears as a natural candidate to be a quasi-distance taking into account the transport. It can be shown that $\bar {\mathbf d}((s,x),(t,y)) $ and $\bar {\mathbf d}((t,y),(s,x)) $ are equivalent on quasi metric balls only (whereas they are actually globally equivalent when there are no strictly upper-diagonal contributions in $B$, see also Section \ref{CZ_FORMAL}).
This observation could lead to consider the associated metric balls as homogeneous spaces to be in the Coifman-Weiss setting. The problem with this choice is that it is not clear anymore that the (sub)-balls enjoy the doubling  property.
This is why the authors in \cite{bram:cupi:lanc:prio:09}, \cite{bram:cupi:lanc:prio:13} rely on  some specific estimates established by Bramanti \cite{bram:10} on possibly non doubling spaces (see  Section \ref{CZ_FORMAL} for details). Also, their analysis strongly relies on some underlying Lie group structure.  

Let us mention that for variable homogeneous coefficients (namely in $VMO_{loc}$ (resp. $C^\alpha $) w.r.t. the distance induced by the vector fields), local $L^p$ (resp. Schauder) estimates have been obtained by Bramanti and Zhu \cite{bram:zhu:13} following the same lines. 
Concerning the link between the $L^p$ estimates of \cite{bram:cupi:lanc:prio:13} and the well-posedness of the martingale problem, we can refer to the recent work of Priola \cite{prio:15} who introduces a rather general localization procedure that allows to extend the well posedness of the martingale problem from the case of \textit{almost constant} coefficients to the natural one of continuous coefficients. The contribution of \cite{prio:15}, w.r.t. to the classic localization results of Stroock and Varadhan (see e.g. Chapter 6.6 in \cite{stro:vara:79}), being the handling of rather general unbounded coefficients.

The main novelty in our approach consists in considering inhomogeneous coefficients and rather general non-linear drifts for the degenerate part of equation \eqref{SYST}. To this end we introduce a suitable kernel, for which we establish Calder\'on-Zygmund estimates, and do not exploit some underlying Lie group properties appearing in the quoted works and which fail in our setting. The key idea consists in viewing \eqref{SYST} as an ODE \textit{perturbed} by a noise. This naturally yields to consider balls 
that are build around the characteristic lines of the ODE and reflect the multi-scale behavior of the process, where the various scales are once again those of the Brownian motion and its iterated integrals. This approach also allows, through a suitable localization procedure, to establish density estimates in $L^q$ (see equation \eqref{krylov}), which is the natural framework for diffusion coefficients that are just continuous. This is to our best knowledge the first result of this kind in the weak H\"ormander setting, even for a linear drift.

Let us also mention that, as a byproduct of our Cald\'eron-Zygmund estimates, it should be possible to get the well posedness of a decoupled degenerate BSDE having a H\"older continuous in space driver (or from the analytical viewpoint to develop a strong theory for semi-linear degenerate PDEs with H\"older in space source term) following the lines of Delarue and Guatteri \cite{dela:guat:06}. 

A challenging open problem would consist in extending the density estimates of equation \eqref{krylov} to degenerate It\^o processes of the form \eqref{SYST} where the diffusion coefficient would simply be measurable, bounded from above and from below. This would indeed give degenerate Krylov like estimates (see Sections 2 and 3 of Chapter 2 in \cite{kryl:87} in the uniformly elliptic setting) which would be the crux to get existence and uniqueness results for fully coupled degenerate Backward SDEs (or again to get a strong theory of quasilinear degenerate PDEs) with the previous type of drift (see Delarue \cite{dela:02}, \cite{dela:03} for an exposition of the strategy in the non-degenerate case). 


The article is organized as follows.  We state our assumptions and main results in Section \ref{ASS_AND_RES}. We then introduce in Section \ref{CZ_FORMAL} the degenerate Gaussian kernel for which we establish Calder\'on-Zygmund estimates, recalling formally how uniqueness can be derived from these controls when the coefficients do not vary much.
 In Section \ref{THE_CZ_SEC} we specify the various steps that lead to the Calder\'on-Zygmund estimates of Theorem  \ref{EST_CZ}. 
We then perform in Section \ref{APP_MT} a localization procedure  and give some local and global controls on the density from the previous estimates. This requires some careful extensions of the arguments of the non-degenerate framework,  see e.g. Sections 7.1, 7.2 and 9.1 in \cite{stro:vara:79}), exploiting again the characteristic lines of the underlying ODE. Section \ref{SEC_TEC} is the technical core of the paper and is devoted to the proof of the technical results of Section \ref{THE_CZ_SEC}.

\mysection{Assumptions and Main Results}
\label{ASS_AND_RES}

\subsection{Notations and Assumptions}

In what follows, we denote a quantity in $\R^{nd}$ by a bold letter: i.e. $\0$, stands for zero in $\R^{nd}$ and the solution $(X_t^1,\dots,X_t^n)_{t \geq 0}$ 
to \eqref{SYST} is denoted by $({\mathbf X}_t)_{t \geq 0}$. Introducing the embedding matrix $B$ from $\R^d$ into $\R^{nd} $, i.e. $B= (I_d , 0, \dots, 0)^*$, where ``$*$'' stands for the transpose, we rewrite \eqref{SYST} in the shortened form 
\begin{equation*}
d{\mathbf X}_t = {\mathbf F}(t,{\mathbf X}_t) dt+ B \sigma(t,{\mathbf X}_t) dW_t,
\end{equation*}
where ${\mathbf F}=(F_1,\dots,F_n)$ is an $\R^{nd}$-valued function.

With these notations the generator of \eqref{SYST} writes for all $ t\ge 0$:
\begin{equation}
\label{GEN}
\forall \varphi \in C_0^2(\R^{nd}),\ \forall \x\in \R^{nd},\ L_t \varphi(\x)= \langle \gF(t,\x) , \gD_\x\varphi(\x)\rangle +\frac 12 \tr(a(t,\x) D_{\x_1}^2 \varphi(\x)).
\end{equation}
Also, for a point $\x:=(\x_1,\cdots,\x_n)\in \R^{nd} $, we will often denote for all $i\in \leftB 1,n-1\rightB,\ \x^{i,n}:=(\x_i,\cdots,\x_n) $. The notation $|\cdot | $ stands for the (Euclidean) norm on $\R^m $ or $\R^m \otimes \R^m,\ m\in \{d,nd\}$.

Let us now introduce some assumptions concerning the coefficients of \eqref{SYST}. 
\begin{trivlist}
\item[\A{${\mathbf C}$}]   The diffusion coefficient $(a(t,.))_{t\ge 0} $  is bounded measurable and continuous in space, i.e.
$$\lim_{\y\rightarrow \x}\sup_{0\le s\le T}|a(s,\y)-a(s,\x)|=0 $$
for all $T>0$ and $\x\in \R^{nd}$.
\item[\A{UE}] There exists  $\Lambda \ge 1,\ \forall t\ge 0, \x\in \R^{nd},\ \xi \in \R^d, \ \Lambda^{-1}|\xi|^2\le \langle a(t,\x) \xi,\xi\rangle \le \Lambda |\xi|^2 $. 
\begin{trivlist}
\item[\A{S}] The $(F_i)_{i\in \leftB 1,n\rightB}$ are bounded measurable in time, globally Lipschitz continuous 
in space. 
Also the $(D_{\x_{i-1}} F_i)_{i\in \leftB 2,n\rightB} $ are $\eta $-H\"older continuous in space.
\end{trivlist}

\item[\A{ND}]  
There exists a closed convex subset ${\mathcal E}_{i-1} \subset GL_{\mathbf d}(\R)$
(set of invertible $d \times d$  matrices)
 s.t., for all
$t \geq 0$ and $(\x_{i-1},\dots,\x_n) \in \R^{(n-i+2)d}$, $D_{\x_{i-1}} F_i(t,\x_{i-1},\dots,\x_n)
\in {\mathcal E}_{i-1}$.
For example, ${\mathcal E}_i$, $ i \in \leftB 1, n-1\rightB$, may be a closed ball
included in $GL_{\mathbf d}(\R)$, which is an open set.
 \end{trivlist}
Assumptions \A{UE}, \A{ND} can be seen as a kind of (weak) H\"ormander condition. They allow to transmit the non degenerate noise of the first component to the other ones. Let us also recall that the last part of Assumption \A{ND} and the particular structure of $\gF(t,.)=(F_1(t,.),\cdots, F_n(t,.))$ yield that the $i^{{\rm th}} $ component of the system \eqref{SYST} has intrinsic time scale $(2i-1)/2, i\in \leftB 1, n\rightB $. This fact will be thoroughly used in our analysis (see Section \ref{CZ_FORMAL} for details). 
We notice that the coefficients may be irregular in time, see \A{S}. 
We say that assumption \A{A} is in force when \A{C}, \A{UE}, \A{S}, \A{ND} hold.


\subsection{Main Results}
\label{main_results}


Our main result is the following theorem.
\begin{THM}
\label{THM_M}
Under \A{A} the martingale problem associated with $(L_t)_{t\ge 0}$ in \eqref{GEN} is well-posed. That is, for every $\x\in \R^{nd}$, there exists a unique probability measure $\P $ on $C(\R^+, \R^{nd}) $ s.t. denoting by $(\X_t)_{t\ge 0} $ the canonical process, $\P[\X_0=\x]=1 $ and for all $\varphi\in C_0^{1,2}(\R^+\times \R^{nd},\R),\ \varphi(t,\X_t)-\varphi(0,\x)-\int_0^t (\partial_s+L_s)\varphi(s,\X_s)ds  $ is a $\P $-martingale. In particular, weak uniqueness in law holds for the SDE \eqref{SYST}.

Also, if the diffusion coefficient $a$ is uniformly continuous, the unique weak solution of \eqref{SYST} admits a density in the following sense. Letting  $P(s,t,\x,.)$ be the transition probability determined by $(L_t)_{t\ge 0}$, then for a given $T>0$,  almost all $t\in (s,T] $ and all $\Gamma \in {\mathcal B}(\R^{nd})$,   $P(s,t,\x,\Gamma) =\int_\Gamma p(s,t,\x,\y) d\y$.

More specifically, for any $f\in L^p([0,T]\times \R^{nd}),\ p>\frac{(n^2d+2)}{2} $, there exists $C_{\ref{krylov}}:=C_{\ref{krylov}}(T,p,\A{A}) $ s.t. for all $(s,\x)\in [0,T)\times \R^{nd} $:
\begin{equation}
\label{krylov}
|\E^{\P_{s,\x}}[\int_s^T f(t,\X_t)dt]|\le  C_{\ref{krylov}}(1+|\x|)\|f \|_{L^p([0,T]\times \R^{nd})},
\end{equation}
where $\E^{\P_{s,\x}}$ denotes the expectation w.r.t. $\P_{s,\x}[\cdot]:=\P[\cdot |\X_s=\x] $.
\end{THM}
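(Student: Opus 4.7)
The plan is to follow the classical Stroock--Varadhan programme for the martingale problem (see Chapters 6--7 of \cite{stro:vara:79}), transported to the degenerate Kolmogorov setting by replacing the standard Gaussian kernel with the anisotropic proxy density $\tilde p$ obtained by freezing $a$ at a base point $\x_0$ and linearizing $\gF$ along a characteristic curve of $\dot\bphi = \gF(t,\bphi)$. Existence for the martingale problem is standard: under \A{A} the coefficients are continuous in space with at most linear growth, so a Skorohod tightness argument provides a weak solution. The heart of the proof is uniqueness, which I would derive from the Calder\'on-Zygmund bound of Theorem \ref{EST_CZ} for the operator $f \mapsto D_{\x_1}^2 (\tilde p \ast f)$ in $L^p(\R^+\times \R^{nd})$, $p \in (1,\infty)$, combined with a local perturbation of the coefficients and a localization procedure.

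The first substantive step is an $L^p$ solvability result for the Cauchy problem $(\partial_t + L_t) u = f$, $u(T,\cdot) = 0$, when $a$ oscillates only slightly around $a(\x_0)$ and $\gF$ deviates only slightly from its linearization at $\x_0$, on a space-time window measured in the quasi-metric $\bar{\mathbf d}((s,\x),(t,\y)) = \bar\rho(t-s,\x - \exp(-B(t-s))\y)$ adapted to the transport. Writing $L_t = \tilde L_t + (L_t - \tilde L_t)$, one inverts $\partial_t + \tilde L_t$ through convolution with $\tilde p$ and absorbs the remainder by a Neumann series, the crucial ingredient being the $L^p$ control of $D_{\x_1}^2(\tilde p \ast f)$ furnished by Theorem \ref{EST_CZ}. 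Any solution $\P$ of the corresponding martingale problem then satisfies $|\E^{\P}\int_0^T f(s,\X_s)\, ds| \leq C\|f\|_{L^p}$ via It\^o's formula applied to $u$, so its time-marginals are uniquely determined, and the standard regular-conditional-probability argument of \cite{stro:vara:79} yields uniqueness of $\P$ itself.

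The second substantive step is the localization procedure that lifts the almost-constant-coefficients result to the continuous case. Around a generic point $(t_0,\x_0)$, one builds the localization window around the deterministic characteristic $t \mapsto \bphi^{t_0,\x_0}(t)$ associated with $\sigma \equiv 0$, and measures distances to this curve in $\bar{\mathbf d}$ rather than in the Euclidean metric; the Lipschitz first-component drift $F_1$ is handled by a Girsanov transform after a preliminary stopping-time truncation. Patching is then performed along the lines of Sections 7.1, 7.2 and 9.1 of \cite{stro:vara:79} using exit times from the anisotropic windows. I expect this to be the main obstacle: the quasi-metric $\bar{\mathbf d}$ is not symmetric and the associated balls need not be doubling, so the homogeneous-space framework of \cite{coif:weis:71} is unavailable, and one has to work directly with the characteristic curves in order to obtain the uniform oscillation control of $a$ along each localization window required by \A{C} for the perturbation argument to apply.

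For the density statement and \eqref{krylov} I would appeal once more to the $L^p$ solvability. Given $f \in L^p([0,T]\times\R^{nd})$ with $p > (n^2 d + 2)/2$, one solves the dual Cauchy problem with source $f$ and zero terminal data; the threshold $n^2 d + 2$ is the intrinsic homogeneous dimension of the multiscale metric (namely $2$ for time plus $\sum_{i=1}^n (2i-1)d = n^2 d$ for the spatial components), and the restriction $p > (n^2 d + 2)/2$ ensures via anisotropic Sobolev embedding that the Calder\'on-Zygmund solution $u$ is continuous with $|u(s,\x)| \leq C(1+|\x|)\|f\|_{L^p}$, the linear growth arising from the unbounded transport of the proxy along the characteristic flow. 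Applying It\^o's formula to $u(t,\X_t)$ between $s$ and $T$ under $\P_{s,\x}$ then yields \eqref{krylov}, and the resulting absolute continuity of the time-space occupation measure of $\X$ with respect to Lebesgue measure gives the existence of the transition density $p(s,t,\x,\y)$ when $a$ is uniformly continuous.
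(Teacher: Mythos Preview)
Your overall architecture is the right one and matches the paper: existence by tightness, uniqueness by a perturbative $L^p$ argument around a frozen/linearized proxy using the Calder\'on--Zygmund estimate of Theorem~\ref{EST_CZ}, then localization. However there are two genuine gaps.

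\textbf{The drift remainder is not handled by the Calder\'on--Zygmund estimate.} You write that after inverting $\partial_t+\tilde L_t$ one ``absorbs the remainder by a Neumann series, the crucial ingredient being the $L^p$ control of $D_{\x_1}^2(\tilde p\ast f)$''. But $L_t-\tilde L_t$ has, besides the second-order piece $\tfrac12(a-\varsigma)D_{\x_1}^2$, a first-order piece $\langle \gF(s,\x)-\gF^{\rm lin}(s,\x),{\mathbf D}_\x\rangle$ which hits the degenerate directions $\x_2,\dots,\x_n$. A derivative $D_{\x_i}\tilde q$ carries a singularity $(t-s)^{-(2i-1)/2}$, so this is \emph{not} controlled by Theorem~\ref{EST_CZ}. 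The paper splits this drift remainder into a term $Nf$ (equation~\eqref{DEF_N}) exploiting the triangular structure of $\gF$ to cancel the singularity exactly, and terms $(D_{\x_i}R_if)_{i\ge 2}$ (equation~\eqref{DEF_R}) for which the $\eta$-H\"older continuity of $D_{\x_{i-1}}F_i$ in \A{S} yields an integrable singularity $(t-s)^{-1+\eta(i-3/2)}$; see \eqref{CTR_NF}--\eqref{CTR_DR}. Without this mechanism the Neumann series does not close. Relatedly, your proposal to remove $F_1$ by Girsanov is neither used nor needed (and Girsanov cannot touch $F_2,\dots,F_n$ anyway, since they lie outside the range of $B\sigma$); in the paper all of $\gF$ is absorbed into the linearized flow $\tilde\btheta^{t,\y}$, with freezing along the \emph{backward} characteristic $\btheta_{\cdot,t}(\y)$.

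\textbf{The origin of the factor $(1+|\x|)$ in \eqref{krylov}.} Your claim that ``$|u(s,\x)|\le C(1+|\x|)\|f\|_{L^p}$, the linear growth arising from the unbounded transport of the proxy'' is not how the estimate arises. Under the local condition the Green operator $\tilde G\circ(I-R)^{-1}$ is uniformly bounded pointwise (equation~\eqref{POINT_W_CTR}); there is no growth in $\x$ at that stage. The $(1+|\x|)$ appears only after the \emph{global} localization: one covers $[s,T]\times\R^{nd}$ by tubes $\mathcal C_{t_i^\k,t_{i+1}^\k,R}(s_0,\x_0)$ around characteristics (equation~\eqref{DEF_CROWN}) on which the diffusion coefficient is nearly constant, and because $|\btheta_{t,s}(\x)-\x|\lesssim (t-s)|\x|$ the number of time subintervals needed scales like $1+|\x|$; summing the local bounds over these $O(1+|\x|)$ pieces produces the factor (see the Remark after Theorem~\ref{THM_M} and the derivation of~\eqref{PRELIM_ESTIM_DES}). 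For the global well-posedness itself the paper invokes Priola's localization \cite{prio:15} for unbounded drifts rather than the bare Stroock--Varadhan patching you cite, precisely because the standard argument in Chapter~6 of \cite{stro:vara:79} assumes bounded coefficients.
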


\begin{REM}
Let us first emphasize that by duality, the previous control gives a bound for the density in $L^q([0,T]\times \R^{nd})$ where $q^{-1}+p^{-1}=1 $. Also,
the contribution in $\x $ in the r.h.s. of  \eqref{krylov} is specifically linked to the unboundedness of the drift term in \eqref{SYST}.
It derives from the localization procedure needed for the analysis. Namely, we are led to consider a suitable partition of $[0,T]\times \R^{nd} $ on which the coefficients of \eqref{SYST} satisfy a same given continuity constraint. Consider for instance a given point $(s,\x)\in [0,T]\times \R^{nd} $, and a given threshold $\varepsilon>0 $. It is then clear that, 
for the deterministic differential system deriving from \eqref{SYST} with dynamics
\begin{equation}
\label{FORWARD_FLOW}
\dot \btheta_{t,s}(\x)=\gF(t,\btheta_{t,s}(\x) ), t\ge s, \btheta_{s,s}(\x)=\x,
\end{equation}
which can somehow be seen as the \textit{mean} of the system, one has
$|\btheta_{t,s}(\x)-\x|\le  \kappa \int_s^t |\btheta_{u,s}(\x)|du\le \kappa (t-s)|\x|\exp(\kappa(t-s)),\ t\ge s $, where $ \kappa$ stands for the Lipschitz constant of $\gF $. Hence, one has $ |\btheta_{t,s}(\x)-\x|\le \varepsilon $ for $|t-s|\le C/|\x|, C:=C(\kappa,\varepsilon),$ which corresponds to the time step of the partition for a given $|\x|$. For a fixed $T>0$, $\lceil |\x|C^{-1}T \rceil$ is then an upper bound for the total number of time-steps.


\end{REM}

\mysection{``Frozen" Kernel and Formal derivation of uniqueness from Calder\'on-Zygmund estimates}
\label{CZ_FORMAL}

Assume \A{A} is in force. One of the main differences between the uniform H\"older continuity assumed in \cite{bass:perk:09} in the non degenerate case or in \cite{meno:10} for the current framework and the continuity statement of \A{C} is that in the first two cases no localization is needed. Indeed, the global H\"older continuity allows to remove globally the time singularities coming from the second order spatial derivatives of suitable Gaussian kernels arising in a parametrix like expansion of the density. In the current framework we first focus on the ``local case". As in the non-degenerate case, we assume the diffusion coefficient $a(t,.):=\sigma\sigma^*(t,.) $ of  \eqref{SYST} ``does not vary much" (see e.g. Chapter 7 of \cite{stro:vara:79}). 
Precisely, we first assume that  there exists  a measurable function $\varsigma:[0,T]\rightarrow {\mathcal S}_d $ (symmetric matrices of dimension $d$)
satisfying \A{UE} and such that
\begin{eqnarray}
\label{LOC_COND}
\varepsilon_a&:=&\sup_{0\le t \le T}\sup_{\x\in \R^{nd} } |a(t,\x)-\varsigma(t)|,
\end{eqnarray}
is small. In particular, we do not assume any \textit{a priori} continuity of $a$. The continuity assumption \A{C}
will actually allow, through a suitable localization procedure described in Section \ref{APP_MT}, to have  \eqref{LOC_COND} for all $\x_0\in \R^{nd} $ with  $\varsigma(t)=a(t,\x_0)$ on some neighborhood of $\x_0$.

To define the Gaussian kernel needed for the analysis we first introduce the backward deterministic differential system associated with \eqref{SYST}. For fixed $T>0,\ \y\in \R^{nd}$ and $t\in [0,T]$, we define:
\begin{equation}
\label{DET_SYST}
\overset{.}{\btheta}_{t,T}(\y)=\gF(t,\btheta_{t,T}(\y)),\ \btheta_{T,T}(\y)=\y .
\end{equation} 
\begin{REM}
Observe from \eqref{DET_SYST}, \eqref{FORWARD_FLOW} that $\btheta_{s,t}(\x) $ is well defined for all $s,t\in [0,T], \x\in \R^{nd} $. The associated differential dynamics in $s$ runs forward in time if $s\ge t $ and backward otherwise. 
\end{REM}

Consider now the deterministic ODE  
\begin{equation}
\label{eq:F:240409:3} 
\frac{d}{dt} \tilde{\bphi}_t = {\mathbf F}(t,\btheta_{t,T}(\by))
+ D {\mathbf F}(t,\btheta_{t,T}(\by))[\tilde{\bphi}_t
- \btheta_{t,T}(\by)], \quad t \geq 0,
\end{equation}
where for all $\x\in \R^{nd}$,\\ 
$$D\gF(t,\x)=\left (\begin{array}{ccccc}0 & \cdots & \cdots &\cdots  & 0\\
D_{\x_1}\gF_2(t,\x) & 0 &\cdots &\cdots &0\\
0 & D_{\x_2} \gF_3(t,\x)& 0& 0 &\vdots\\
\vdots &  0                      & \ddots & \vdots\\
0 &\cdots &     0      & D_{\x_{n-1}}\gF_n(t,\x) & 0
\end{array}\right) 
 $$ 
denotes the subdiagonal of the Jacobian matrix ${\mathbf D_\x \gF} $ at point $\x$.

Introduce now for a given $(T,\y)\in \R^{+*}\times \R^{nd}$, the resolvent $(\tilde \gR^{T,\y}(t,s))_{s,t\ge 0} $ associated with the partial gradients $(D {\mathbf F}(t,\btheta_{t,T}(\by)))_{t \ge0} $ which satisfies for $(s,t)\in (\R^{+})^2 $:
\begin{equation}
\begin{split}
\partial_{t}\tilde \gR^{T,\y}(t,s)&={ D \gF}(t,\btheta_{t,T}(\y))\tilde \gR^{T,\y}(t,s) , 
\ \tilde \gR^{T,\y}(s,s)=I_{nd\times nd},\\
\partial_{s}\tilde \gR^{T,\y}(t,s)&=-\tilde \gR^{T,\y}(t,s){ D \gF}(s,\btheta_{s,T}(\y)), 
\ \tilde \gR^{T,\y}(t,t)=I_{nd\times nd}.
\end{split}
\label{DYN_RES}
\end{equation}
Note in particular that since the partial gradients are subdiagonal $ {\rm det}(\tilde \gR^{T,\y}(t,s))=1$.

Setting as well for $0\le s, t\le T $,
\begin{eqnarray*}
\tilde \m^{T,\y}(s,t)&:=&\int_{s}^t \tilde \gR^{T,\y}(t,u)\bigl({\mathbf F}(u,\btheta_{u,T}(\by))
- D {\mathbf F}(u,\btheta_{u,T}(\by)) \btheta_{u,T}(\by)\bigr) du,
\end{eqnarray*}
and denoting by $(\tilde{\btheta}_{t,s}^{T,\by})_{t, s  \geq 0}$
the flow associated with \eqref{eq:F:240409:3}, i.e. $\tilde{\btheta}_{t,s}^{T,\by}(\bx)$ is the value of $\tilde{\bphi}_t$ when $\tilde{\bphi}_s = \bx$, we thus derive:
\begin{equation}
\label{AFFINE}
\begin{split}
\tilde{\btheta}_{t,s}^{T,\by}(\bx)&=\tilde \gR^{T,\y}(t,s)\x
 \\
 &\hspace{5pt}
+\int_{s}^t \tilde \gR^{T,\y}(t,u)\bigl({\mathbf F}(u,\btheta_{u,T}(\by))
- D {\mathbf F}(u,\btheta_{u,T}(\by)) \btheta_{u,T}(\by)\bigr) du\\
 &
=\tilde \gR^{T,\y}(t,s)\x+\tilde \m^{T,\y}(s,t).
\end{split}
\end{equation}
Note that the flow is affine.

We now introduce for all $0\le s<t,\ (\x,\y)\in (\R^{nd})^2 $ the kernel: 
\begin{eqnarray}
\label{DEF_KERN}
\tilde q(s,t,\x,\y):=\frac{1}{(2\pi)^{nd/2} \det(\tilde \K^\y(s,t))^{1/2}}\nonumber\\
\times\exp\left(-\frac 12\langle  \tilde \K^\y(s,t)^{-1}(\tilde \btheta^{t,\y}_{t,s}(\x)-\y),\tilde \btheta^{t,\y}_{t,s}(\x)-\y \rangle \right),
\end{eqnarray}
where $\tilde \K^\y(s,t):=\int_s^t \tilde \gR^{t,\y}(t,u)B \varsigma(u)B^* \tilde \gR^{t,\y}(t,u)^* du$. In other words, under \A{A}, denoting by $ \varsigma(u)^{1/2}$ the only subdiagonal matrix s.t. $\varsigma(u)^{1/2}[\varsigma(u)^{1/2}]^*=\varsigma(u)$,
$\tilde q(s,t,\x,\y) $ is the density at time $t$ and point $\y$ of the diffusion $(\tilde \X_u^{t,\y})_{u\in [s,t]} $ with dynamics:
\begin{eqnarray}
&&\hspace*{-.5cm}d\tilde \X_u^{t,\y}=[\gF(u,\btheta_{u,t}(\y))+ D\gF(u,\btheta_{u,t}(\y))(\tilde \X_u^{t,\y}-\btheta_{u,t}(\y))]du +B\varsigma(u)^{1/2} dW_u,\nonumber\\
&& \hspace*{.75cm}\forall u\in  [s,t],\
 \tilde \X_s^{t,\y}=\x. \label{FROZ}
 \end{eqnarray} 

Assumption \A{A} also guarantees that the covariance matrix $(\tilde \K^\y(s,t))_{0\le s<t} $ satisfies \textit{uniformly} in $\y\in \R^{nd} $ a \textit{good scaling property} in the sense of Definition 3.2 in \cite{dela:meno:10} (see also Proposition 3.4 of that reference). That is: for all fixed $T>0$, there exists $C_{\ref{GSP}}:=C_{\ref{GSP}}(T,\A{A})\ge 1$ s.t. for all $0\le s<t\le T $, for all $\y \in \R^{nd} $:
\begin{equation}
\label{GSP}
\forall \bxi \in \R^{nd},\   C_{\ref{GSP}}^{-1} (t-s)^{-1}|\T_{t-s} \bxi|^2\le \langle  \tilde \K^\y(s,t)\bxi,\bxi\rangle \le C_{\ref{GSP}} (t-s)^{-1}|\T_{t-s} \bxi|^2,
\end{equation}
where for all $t>0,\ \T_t={\rm diag}((t^i I_d)_{i\in \leftB 1,n\rightB} ) $ is a scale matrix. As pointed out in the introduction, equation \eqref{GSP} indicates that the $i^{{\rm th}} $  component of \eqref{FROZ} has characteristic time scale of order $(2i-1)/2$.

From \eqref{DEF_KERN} and \eqref{GSP}, we directly derive that for all $T>0$ there exists $C_{\ref{equiv_dens}}:= C_{\ref{equiv_dens}}(T,\A{A})\ge 1$ s.t. for all $0\le s<t\le T, \ (\x,\y) \in (\R^{nd})^2 $:
\begin{eqnarray}
\label{equiv_dens}
C_{\ref{equiv_dens}}^{-1}(t-s)^{-n^2d/2}\exp(-C_{\ref{equiv_dens}} (t-s)|\T_{t-s}^{-1}(\tilde \btheta^{t,\y}_{t,s}(\x)-\y)| ^2) \le  \tilde q(s,t,\x,\y)\nonumber\\
\le C_{\ref{equiv_dens}}(t-s)^{-n^2d/2}\exp(-C_{\ref{equiv_dens}}^{-1} (t-s)|\T_{t-s}^{-1}(\tilde \btheta^{t,\y}_{t,s}(\x)-\y)| ^2).
\end{eqnarray}


Now, Lemma 5.3 and Equation (5.11) from the proof of Lemma 5.5   in \cite{dela:meno:10} (see also Sections \ref{PROOF_CZ} and \ref{SEC_FLOW} below for details) give that there exists $C_{\ref{EQUIV_FL}}:=C_{\ref{EQUIV_FL}}(T,\A{A})\ge 1$ s.t.:
\begin{eqnarray}
C_{\ref{EQUIV_FL}}^{-1} |\T_{t-s}^{-1}(\x-\btheta_{s,t}(\y))|\le |\T_{t-s}^{-1}(\btheta_{t,s}(\x)-\y)| \le C_{\ref{EQUIV_FL}} |\T_{t-s}^{-1}(\x-\btheta_{s,t}(\y))|,\nonumber\\
C_{\ref{EQUIV_FL}}^{-1} |\T_{t-s}^{-1}(\x-\btheta_{s,t}(\y))|\le |\T_{t-s}^{-1}(\tilde \btheta_{t,s}^{t,\y}(\x)-\y)| \le C_{\ref{EQUIV_FL}} |\T_{t-s}^{-1}(\x-\btheta_{s,t}(\y))|,\nonumber\\
|D_{\x_j}\tilde q(s,t,\x,\y)|\le C_{\ref{EQUIV_FL}}(t-s)^{-j+1} |\T_{t-s}^{-1}(\tilde \btheta_{t,s}^{t,\y}(\x)-\y)|\tilde q(s,t,\x,\y),\ j\in \leftB 1,n \rightB. \nonumber \\
\label{EQUIV_FL}
\end{eqnarray}

On the other hand it is crucial to observe that $\tilde q(s,t,\x,\y)$ satisfies the following Backward Kolmogorov equation for all $(t,\y)\in \R^{+*}\times \R^{nd}$ :
\begin{equation}
\left(\partial_s +\tilde L_s^{t,\y}\right)\tilde q(s,t,\x,\y)=0 ,\ (s,\x) \in [0,t)\times \R^{nd},\ \tilde q(s,t,.,\y)\underset{s\uparrow t}{\longrightarrow }\delta_\y(.). \label{KOLM_BK}
\end{equation}
In the above equation we wrote:
\begin{eqnarray*}
\tilde L_s^{t,\y}\tilde q(s,t,\x,\y)&:=&\langle \gF(s,\btheta_{s,t}(\y))+ D \gF(s,\btheta_{s,t}(\y)) (\x-\btheta_{s,t}(\y)), \gD_\x \tilde q(s,t,\x,\y)\rangle\nonumber \\
&&+ \frac 12 \tr(\varsigma(s) D_{\x_1}^2 \tilde q(s,t,\x,\y) ).
\end{eqnarray*}
For the rest of the section we assume w.l.o.g. that $T\le 1$.
For $0\le s<T $ and a function $f\in C_0^\infty ([0,T)\times \R^{nd})$ we now define for all $\x\in \R^{nd} $:
\begin{equation}
\label{green}
\tilde G f(s,\x):=\int_s^T dt \int_{\R^{nd}}^{} \tilde q(s,t,\x,\y) f(t,\y) d\y .
\end{equation}

From \eqref{KOLM_BK} one easily gets that
\begin{eqnarray*}
\partial_s \tilde Gf(s,\x)+\tilde Mf(s,\x)=-f(s,\x), \ (s,\x)\in [0,T)\times \R^{nd},\ \tilde Gf(s,\cdot)\underset{s\uparrow T}{\longrightarrow } 0,
\end{eqnarray*}
with $\tilde Mf(s,\x):=\int_s^T dt \int_{\R^{nd}} d\y \tilde L_s^{t,\y} \tilde q(s,t,\x,\y) f(t,\y) $.

Hence,
\begin{eqnarray*}
\partial_s \tilde Gf(s,\x)+L_s \tilde Gf(s,\x)=(-f+Rf)(s,\x),\ (s,\x)\in  [0,T)\times \R^{nd},
\end{eqnarray*}
where $Rf(s,\x):=(L_s \tilde Gf-\tilde Mf)(s,\x)=\int_{s}^T dt \int_{\R^{nd}}d\y(L_s -\tilde L_s^{t,\y})\tilde q(s,t,\x,\y) f(t,\y)$.

Now, the local condition \eqref{LOC_COND} yields:
\begin{eqnarray}
|Rf(s,\x)|&\le & |N f(s,\x)|+\bsum{i=2}^n|D_{\x_i}R_i  f(s,\x) |+\frac {\varepsilon_a} 2 |D_{\x_1}^2 \tilde Gf(s,\x)|,\label{CTR_PREAL_INV}
\end{eqnarray}
where setting
\begin{eqnarray}
\gF^{t,\y}(s,\x)&:=&\bigl( \gF_1(s,\btheta_{s,t}(\y)),\gF_2(s,\x_1,(\btheta_{s,t}(\y))^{2,n}),\gF_3(s,\x_2,(\btheta_{s,t}(\y))^{3,n}),\cdots,\nonumber \\
 && \gF_n(s,\x_{n-1},(\btheta_{s,t}(\y))_{n})\bigr),\nonumber\\
Nf(s,\x)&:=&\bsum{i=1}^n \int_s^T dt \int_{\R^{nd}}d\y  \langle \bigl((\gF-\gF^{t,\y})(s,\x)\bigr)_i , \gD_{\x_i} \tilde q(s,t,\x,\y) \rangle  f(t,\y), \nonumber\\ 
\label{DEF_N}
 \end{eqnarray}
and for all $i\in \leftB 2,n \rightB$,\
\begin{eqnarray}
 R_i f(s,\x) &:=&\int_s^T dt \int_{\R^{nd}} d\y \tilde q(s,t,\x,\y) f(t,\y) \bigl\{\gF_i^{t,\y}(s,\x)\nonumber\\
 &&-\left[\gF_i(s,\btheta_{s,t}(\y))+D_{\x_{i-1}}\gF_i(s,\btheta_{s,t}(\y))(\x -\btheta_{s,t}(\y))_{i-1}\right] \bigr\}.
 \label{DEF_R} 
 \end{eqnarray}
\begin{REM}
\label{REM_LIN} Observe from the above equation that if, for all $i\in \leftB 2,n\rightB $, the function $\gF_i$ is linear w.r.t. to the $(i-1)^{{\rm th}} $ variable (component that transmits the noise), then for all $(s,\x)\in [0,T)\times \R^{nd},\ R_i f(s,\x)=0$.
\end{REM}

The terms $Nf$ in \eqref{DEF_N} and $(D_{\x_i}R_i f)_{i\in \leftB 2,n\rightB} $ in \eqref{DEF_R} do not have time singularities. Let us justify this point.

Using \eqref{equiv_dens}, \eqref{EQUIV_FL}, we derive from \eqref{DEF_N} that: 
\begin{eqnarray*}
|Nf(s,\x)|\le C \bsum{i=1}^n \int_s^T dt \int_{\R^{nd}}^{}d\y |(\x-\btheta_{s,t}(\y))^{i,n}| |D_{\x_i} \tilde q(s,t,\x,\y)||f(t,\y)|\nonumber \\
\le C\bsum{i=1}^n \int_s^T dt \int_{\R^{nd}}^{}d\y \left\{ \frac{|(\x-\btheta_{s,t}(\y))^{i,n}|}{(t-s)^{(2i-1)/2}}\right\} (t-s)^{1/2}|\T_{t-s}^{-1}(\tilde \btheta_{t,s}^{t,\y}(\x)-\y)|\tilde q(s,t,\x,\y)|f(t,\y)|\\
             \le C\int_s^Tdt \int_{\R^{nd}} \frac{d\y}{(t-s)^{n^2d/2}}\exp(-C^{-1}(t-s)|\T_{t-s}^{-1}(\tilde \btheta_{t,s}^{t,\y}(\x)-\y)|^2)|f(t,\y)|,\ C:=C(\A{A}).\nonumber\\
\end{eqnarray*}
 Now,  as a consequence of H\"older's inequality we derive that for all $p>1,\ p^{-1}+q^{-1}=1$:
\begin{eqnarray*}
|Nf(s,\x)|^p\le C(p,\A{A})T^{p/q} \int_s^T dt \int_{\R^{nd}} \frac{d\y}{(t-s)^{n^2d/2}}\nonumber\\
\exp(-C^{-1}(t-s)|\T_{t-s}^{-1}(\tilde \btheta_{t,s}^{t,\y}(\x)-\y)|^2)|f(t,\y)|^p.
\end{eqnarray*}
 The Fubini Theorem and \eqref{EQUIV_FL} then yields:
\begin{equation}
\|Nf\|_{L^p([0,T)\times \R^{nd})}\le C(p,\A{A}) T \|f\|_{L^p([0,T)\times \R^{nd})}.
\label{CTR_NF}
\end{equation}

From \eqref{equiv_dens}, \eqref{EQUIV_FL} we also derive from \eqref{DEF_R} that for all $i\in \leftB 2,n\rightB $:
\begin{eqnarray*}
|D_{\x_i}R_if(s,\x)|&\le& C \int_s^T dt \bint{\R^{nd}}^{} \frac{d\y }{(t-s)^{n^2d/2}} (t-s)^{-i+1/2}|(\x-\btheta_{s,t}(\y))_{i-1}|^{1+\eta}\nonumber\\
&&\times \exp\left(-C^{-1}(t-s)|\T_{t-s}^{-1}(\tilde \btheta_{t,s}^{t,\y}(\x)-\y)|^2\right) |f(t,\y)| \nonumber\\
&\le & C\int_s^T dt (t-s)^{-1+(i-\frac32 )\eta }\bint{\R^{nd}}^{}\frac{d\y }{(t-s)^{n^2d/2}} |f(t,\y)|\nonumber\\
&&\times \exp\left(-C^{-1}(t-s)|\T_{t-s}^{-1}(\tilde \btheta_{t,s}^{t,\y}(\x)-\y)|^2\right).
\end{eqnarray*}
Hence, by H\"older's inequality and for $p>2,\ p^{-1}+q^{-1}=1 $:
\begin{eqnarray}
|D_{\x_i}R_if(s,\x)|^p
\le C(p,\A{A})\left( \int_s^T dt(t-s)^{-1+\frac \eta 2}\right)^{p/q}\nonumber \\
\times \left( \int_s^T dt (t-s)^{-1+\frac \eta 2}\int_{\R^{nd}} \frac{d\y}{(t-s)^{n^2 d/2}}\exp\left(-C^{-1}(t-s)|\T_{t-s}^{-1}(\tilde \btheta_{t,s}^{t,\y}(\x)-\y)|^2\right) |f(t,\y)|^p\right),\nonumber \\
\|D_{\x_i}R_if\|_{L^p([0,T)\times \R^{nd})}\le C(p,\A{A}) T^{\eta/2}  \|f\|_{L^p([0,T)\times \R^{nd})},
\label{CTR_DR}
\end{eqnarray}
where the last control again follows from Fubini's theorem.

Now, the key tool to prove uniqueness for the martingale problem derives from the following Calder\'on and Zygmund type estimate for the Green function $\tilde G f$. Namely, we have the following theorem which is proved in Section \ref{SUBS_EST_CZ}.
\begin{THM}
\label{EST_CZ}
Assume that Assumption \A{A} is in force. Suppose also that 
that $T\in (0,T_0],\ T_0:=T_0(\A{A})\le 1 $. Then,  for all $p\in (1,+\infty)$ there exists $C_{\ref{EQ_EST_CZ}}:=C_{\ref{EQ_EST_CZ}}(T_0,p,\A{A})$ s.t. for all $
f\in L^p([0,T)\times \R^{nd}) $,
\begin{eqnarray}
\label{EQ_EST_CZ}
\|D_{\x_1}^2\tilde Gf\|_{L^p([0,T)\times \R^{nd})}&\le &C_{\ref{EQ_EST_CZ}} \|f\|_{L^p([0,T)\times \R^{nd})},  
\end{eqnarray}
where the Green function $\tilde Gf  $ is defined in \eqref{green} with the kernel $\tilde  q$ introduced in \eqref{DEF_KERN}. 
\end{THM}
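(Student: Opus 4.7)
The plan is to follow the classical Calder\'on-Zygmund program, adapted to the degenerate multi-scale setting. First I would establish an $L^2$ bound for the operator $f\mapsto D_{\x_1}^2 \tilde Gf$; then I would prove pointwise size and cancellation estimates on the associated kernel with respect to a quasi-metric adapted to the deterministic flow \eqref{DET_SYST} and the anisotropic scale $\T_t$; and finally I would invoke a Calder\'on-Zygmund extrapolation theorem valid on possibly non-doubling quasi-metric spaces (in the spirit of Bramanti's work used in \cite{bram:cupi:lanc:prio:09}) to extend to $L^p$ for every $p\in(1,\infty)$.

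\textbf{$L^2$ base estimate.} For fixed $\y\in\R^{nd}$, the kernel $\tilde q(s,t,\cdot,\y)$ is the Gaussian density of the linear diffusion \eqref{FROZ}, whose coefficients depend only on time once $\y$ is frozen. The affine structure of the flow in \eqref{AFFINE} and the explicit Gaussian form \eqref{DEF_KERN} allow one, after a change of variable along $\tilde\btheta^{t,\y}_{t,s}$, to recast $f\mapsto D_{\x_1}^2 \tilde G f$ (for fixed $\y$) as an operator whose symbol is a bounded Fourier multiplier, the boundedness being a consequence of the good scaling property \eqref{GSP} of $\tilde\K^\y$. Plancherel's identity, followed by Fubini in the $\y$ variable, then yields $\|D_{\x_1}^2 \tilde Gf\|_{L^2}\le C\|f\|_{L^2}$ with $C$ depending only on $T_0$ and \A{A}.

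\textbf{Kernel estimates.} Next I would introduce the quasi-metric
\[
\mathbf{d}((s,\x),(t,\y)) := |t-s|^{1/2} + \sum_{i=1}^n |(\btheta_{t,s}(\x)-\y)_i|^{1/(2i-1)},
\]
which combines transport along the characteristic lines of \eqref{DET_SYST} with the multi-scale behavior encoded by \eqref{GSP}. A $\mathbf{d}$-ball of radius $r$ has volume $\asymp r^{Q}$ with homogeneous dimension $Q=n^2d+2$, consistent with the exponent $n^2d/2$ in \eqref{equiv_dens} and with the threshold $(n^2d+2)/2$ appearing in \eqref{krylov}. Differentiating \eqref{DEF_KERN} twice and combining with \eqref{equiv_dens} and \eqref{EQUIV_FL} yields a size bound of the form $|D_{\x_1}^2\tilde q(s,t,\x,\y)|\le C(t-s)^{-1}\mathbf{d}((s,\x),(t,\y))^{-Q}$, together with H\"ormander-type regularity in both $\x$ and $\y$ on $\mathbf{d}$-balls. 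The gradient controls in \eqref{EQUIV_FL} let the two negative powers of $t-s$ produced by $D_{\x_1}^2$ be absorbed by the anisotropic Gaussian decay, while the Lipschitz and H\"older assumptions \A{S} control the differentiation of $\btheta_{t,s}$ with respect to its arguments.

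\textbf{Non-doubling extrapolation and main obstacle.} As emphasized in Section \ref{CZ_FORMAL}, the $\mathbf{d}$-balls are not necessarily doubling once the drift has genuine strictly upper-diagonal contributions, so the Coifman-Weiss machinery does not apply directly. Instead I would appeal to the abstract Calder\'on-Zygmund theorem on possibly non-doubling quasi-metric spaces established by Bramanti, whose hypotheses (an $L^2$ bound together with size and H\"ormander conditions) are exactly those produced by Steps 1--2. The restriction $T\le T_0(\A{A})\le 1$ keeps the bilipschitz constants of $\btheta_{t,s}$ close to one, ensuring that $\mathbf{d}$ is a quasi-distance with uniform constant and that the exit from $\mathbf{d}$-balls can be quantitatively controlled. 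The main obstacle is the verification of the H\"ormander cancellations uniformly in the freezing point $\y$: unlike the linear translation-invariant setting of \cite{bram:cupi:lanc:prio:09}, here both the drift linearization $D\gF(\cdot,\btheta_{\cdot,T}(\y))$ and the covariance $\tilde\K^\y$ depend on $\y$, and one must therefore carefully separate the contributions of the flow from those of the Gaussian exponent, using \eqref{EQUIV_FL} to absorb the resulting error terms --- this is what makes Section \ref{SEC_TEC} the technical core of the paper.
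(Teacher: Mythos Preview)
Your overall architecture---quasi-metric adapted to the flow, kernel size/regularity estimates, and then Bramanti's non-doubling theory---matches the paper's. The gap is in your $L^2$ step. The kernel $\tilde q(s,t,\x,\y)$ depends on $\y$ not only through the argument $\tilde\btheta_{t,s}^{t,\y}(\x)-\y$ but also through the covariance $\tilde\K^\y(s,t)$ and the resolvent $\tilde\gR^{t,\y}$: the freezing is \emph{at the integration variable} $\y$. No change of variable along $\tilde\btheta^{t,\y}_{t,s}$ reduces $f\mapsto D_{\x_1}^2\tilde Gf$ to a Fourier multiplier, because the covariance you would then have to invert still varies with $\y$. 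The Plancherel-then-Fubini argument you sketch does not give an $L^2$ bound for this operator.

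The paper replaces this missing $L^2$ input by a \emph{cancellation property} (point \textit{iv)} of Proposition~\ref{CZ_KER_SING}): for the near-diagonal part $k_{ij}^d$ of the kernel and its adjoint, the truncated integrals $\int_{\mathbf{d}^*>\epsilon}k_{ij}^d\,dtd\y$ are uniformly bounded and converge as $\epsilon\to0$. This, together with the size estimate \textit{i)} and the regularity \textit{ii)--iii)}, is exactly the hypothesis package for Theorem~3 of \cite{bram:10}, which is a $T(1)$-type result rather than an extrapolation from $L^2$. Proving \textit{iv)} is in fact the main technical difficulty (Section~\ref{SEC_GROS_CALCULS}): one must change variables $\z=\x-\btheta_{s,t}(\y)$, mollify the drift gradients $D\gF$ at scale $(t-s)^{1/8}$ to be able to differentiate the resolvent, and then use the divergence theorem on the level sets $\rho(t-s,\z)=\beta$ to exhibit the cancellation. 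The paper also needs a localization layer you omit: Bramanti's theorem is applied on individual $\mathbf{d}$-balls (Proposition~\ref{LOC_CANC}), and the covering Theorem~\ref{THM_COV} is then used to glue the local $L^p$ bounds into a global one. Your size estimate is also slightly off: point \textit{i)} is $|k_{ij}^d|\le C\,\mathbf{d}((s,\x),(t,\y))^{-(n^2d+2)}$, without an extra $(t-s)^{-1}$ factor.
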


\begin{REM}
Let us specify that the \textit{small time} condition appearing here is due to the fact that we are led to compare the flow $\btheta$ and its \textit{linearization}. It is clear that this procedure can be a good approximation in small time only.
\end{REM}

Hence, plugging \eqref{EQ_EST_CZ}, \eqref{CTR_DR} and \eqref{CTR_NF}  into \eqref{CTR_PREAL_INV} we derive that under \A{A}
, with the notations of \eqref{LOC_COND}, for $p>2$, 
\begin{equation}
\label{CTR_R_OPINV}
\|Rf\|_{L^p([0,T)\times \R^{nd})}\le  (\frac {\varepsilon_a}  2 C_{\ref{EQ_EST_CZ}}+C(p,\A{A}) T^{\frac \eta 2})\|f\|_{L^p([0,T)\times \R^{nd})}. 
\end{equation}
Thus, for $\varepsilon_a<C_{\ref{EQ_EST_CZ}}^{-1} $ and $T<(4C(p,\A{A}))^{-2/\eta}$, the operator $I-R $ admits a bounded inverse on $L^p([0,T)\times \R^{nd}) $, and formally $Gf(s,\x):=\tilde G\circ (I-R)^{-1} f (s,\x), \ (s,\x)\in [0,T]\times \R^{nd} $ solves the Cauchy problem:
\begin{equation*}
\begin{cases}
(\partial_t +L_t)u(t,\x)=-f(t,\x), (t,\x)\in [0,T)\times \R^{nd},\\
u(T,\x)=0,
\end{cases}
\end{equation*}
for $f\in L^p([0,T)\times \R^{nd}),\ p>(n^2d+2)/2 $. This last condition on $p$ is needed to give a pointwise sense to $Gf $. Observe indeed from H\"older's inequality and the upper-bound in \eqref{equiv_dens} that, for all $p>(n^2d+2)/2 $, there exists $C_{\ref{POINT_W_CTR}}:=C_{\ref{POINT_W_CTR}}(p,\A{A}) $, s.t. for all  $f\in L^p([0,T)\times \R^{nd})$, $(s,\x)\in [0,T)\times \R^{nd} $,
\begin{equation} 
\label{POINT_W_CTR}
|\tilde Gf(s,\x)|\le C_{\ref{POINT_W_CTR}}T^{1-(2+n^2d)/(2p)}\|f\|_{L^p([s,T)\times \R^{nd})}.
\end{equation}
The above control is an extension of Lemma 7.1.1. in \cite{stro:vara:79} in the non-degenerate case.
From the probabilistic viewpoint we will prove that there is only one probability $\P$ on $C([0,T],\R^{nd}) $ solving the martingale problem and therefore derive $Gf(s,\x)=\E^{\P_{s,\x}}[\int_s^T f(t,\X_t)dt] $, $(\X_t)_{t\in [0,T]} $ being the canonical process. 
A localization argument similar to the one 
in Priola \cite{prio:15}
then allows to extend the well posedness of the martingale problem under the sole continuity assumption \A{C} in \A{A}, i.e. without the local condition \eqref{LOC_COND}, see Section \ref{APP_MT}. 

\begin{REM}[Some Points about the Drift]
One can wonder if the assumptions on the drift $\gF$ can be weakened in order to conserve the global well-posedness of the martingale problem.
To answer this query one needs to consider separately $\gF_1$, associated with the non degenerate component, and the $(\gF_i)_{i\in \leftB 2,N\rightB}$, associated with the degenerate ones. 
\begin{trivlist}
\item[-] For $\gF_1$ the assumptions in \A{S} can be considerably weakened. Indeed if $\gF_1\in L^p([0,T]\times \R^{nd}), p>n^2d+2 $ then the martingale problem is still well posed provided the $(\gF_i)_{i\in \leftB 2,n\rightB}$ satisfy \A{S}. To see this, instead of \eqref{DET_SYST}, \eqref{eq:F:240409:3},
we consider the following dynamics to define the underlying Gaussian kernel:
\begin{equation*}
\overset{.}{\bar \btheta}_{t,T}(\y)=\bar \gF(t,\bar \btheta_{t,T}(\y)),\ \bar \btheta_{T,T}(\y)=\y,\ 
\end{equation*} 
where for all $\x\in  \R^{nd},\ \bar \gF(t,\x):=({\mathbf 0}, \gF_2(t,\x),\cdots,\gF_n(t,\x))$ and 
\begin{equation*}
\frac{d}{dt} \tilde{\bar \bphi}_t = \bar {\mathbf F}(t,\bar \btheta_{t,T}(\by))
+ D {\mathbf F}(t,\bar \btheta_{t,T}(\by))[\tilde{\bar \bphi}_t
- \bar \btheta_{t,T}(\by)], \quad t \geq 0,
\end{equation*}
i.e. we put the non degenerate drift to 0 in our proxy model. This would yield  in \eqref{CTR_PREAL_INV}, \eqref{DEF_R} the additional contribution
\begin{equation}
\begin{split}
|D_{\x_1}R_1f(s,\x)|:=|\int_{s}^T dt \int_{\R^{nd}}  d\y \langle \gF_1(s,\x),D_{\x_1}\tilde q(s,t,\x,\y) \rangle  f(t,\y)|\\
\le C |\gF_1(s,\x)| \int_s^T dt \int_{\R^{nd}} \frac{d\y}{(t-s)^{(n^2d+1)/2}}\exp\left(-C^{-1}(t-s)|\T_{t-s}^{-1}(\tilde \btheta_{t,s}^{t,\y}(\x)-\y)|^2\right) |f(t,\y)|,\label{THE_DRIFT}
\end{split}
\end{equation}
exploiting \eqref{EQUIV_FL} for the last inequality. We get the same time-singularity as in the non-degenerate case.
Thus, if $f\in L^p([0,T)\times \R^{nd}) $:
\begin{eqnarray*}
\|D_{\x_1}R_1f\|_{L^p([0,T)\times \R^{nd})}^p\\
\le \int_0^T ds\int_{\R^{nd}} d\x |\gF_1(s,\x)|^p \|f\|_{L^p([0,T)\times \R^{nd})}^p (\int_s^T dt (t-s)^{-(n^2d/2(q-1)+q/2)})^{p/q},\\
\|D_{\x_1}R_1f\|_{L^p([0,T)\times \R^{nd})} \le \|\gF_1\|_{L^p([0,T)\times \R^{nd})}\|f\|_{L^p([0,T)\times \R^{nd})}T^{\beta},
\end{eqnarray*}
where $\beta:=\frac12 (1-\frac{2+n^2d}{p})>0$ under the previous condition on $p$. Adding this contribution in \eqref{CTR_R_OPINV}, we derive that the operator inversion can still be performed provided $T $ is small enough. In the non-degenerate case, we refer to the work of Krylov and R\"ockner \cite{kryl:rock:05}, or Fedrizzi and Flandoli \cite{fedr:flan:11} for an alternative proof, for additional results concerning strong solvability for drifts in $L^p $-spaces.
Let us also mention that, as in the non-degenerate case, a bounded measurable drift $\gF_1$ does not alter the well posedness of the martingale problem. This can be seen from \eqref{THE_DRIFT} similarly to the previous computations for the terms $(D_{\x_{i-1}}R_i f)_{i\in \leftB 2,n \rightB} $.
\item[-] For $(\gF_i)_{i\in \leftB 2,n \rightB}$ the situation is more complicated. To have as ``proxy" a Gaussian process that satisfies in the whole space the \textit{good scaling property}  \eqref{GSP}, it seems rather natural to impose that $(D_{\x_{i-1}}\gF_i)_{i\in \leftB 2,n\rightB} $ are pointwise defined and non degenerate (H\"ormander like assumption).
In the current framework, a natural question consists in relaxing the H\"older continuity of the $(D_{\x_{i-1}}\gF_i)_{i\in \leftB 2,n\rightB} $. Assuming simply continuity on those functions would again lead to consider a singular integral operator. Observe indeed from the previous computations that for all  $i\in \leftB 2,n \rightB$,
\begin{eqnarray*}
|\gF_i^{t,\y}(s,\x)-\{\gF_i(s,\btheta_{s,t}(\y))+D_{\x_{i-1}}\gF_i(s,\x)(\x-\btheta_{s,t}(\y))_{i-1}\}||D_{\x_i}\tilde q(s,t,\x,\y)|\\
\le \frac{C}{(t-s)^{n^2d/2+1}}\exp\left(C^{-1}(t-s)|\T_{t-s}^{-1}(\tilde \btheta_{t,s}^{t,\y}(\x)-\y)|^2\right),
\end{eqnarray*}
which is the expected time singularity for the convolution kernel of a singular integral operator. Anyhow, it would be in this case rather delicate to establish the \textit{cancellation} property needed to complete the analysis, see also Proposition \ref{CZ_KER_SING} for the properties required on a Calder\'on-Zygmund kernel. The $\eta $-H\"older continuity of the $(\gF_i)_{i\in \leftB 2,n \rightB}$ is a sufficient condition to \textit{globally} get rid of the time singularity (see again \eqref{CTR_DR}).


Let us eventually mention that for $n=2$, under \A{UE}, \A{ND}, when $\sigma$ is Lipschitz continuous, $\gF$ is Lipschitz in $\x_1 $ and $D_{\x_1}\gF_2 $ is H\"older continuous, strong uniqueness has been established for \eqref{SYST} by Chaudru de Raynal \cite{chau:14} provided $ \gF_1, \gF_2$ are $\eta $-H\"older continuous in $\x_2 $ with $\eta>2/3 $.

\end{trivlist}
\end{REM}


\mysection{Derivation of the Calder\'on-Zygmund estimates}
\label{THE_CZ_SEC}
We assume \A{A} is in force and that $T\le T_0(\A{A})\le 1$. 

\subsection{Quasi Metric Structure and Covering}
To derive Theorem \ref{EST_CZ}, a crucial step  consists in considering a ``good" parabolic metric and in taking into account 
the unbounded transport term in \eqref{SYST}.
 In order to take into consideration our various time-scales, associated with the propagation of the noise into the system, we introduce the following metric:
\begin{equation}
\label{metric}
\forall (t,\x)\in \R\times \R^{nd},\ \rho(t,\x):=|t|^{1/2}+\bsum{i=1}^n |\x_i|^{1/(2i-1)}.
\end{equation}
\begin{REM}
\label{Homogeneity}
Recalling the definition of the scale matrix $\T_t:=\diag( (t^i I_d)_{i\in \leftB 1,n \rightB }) $, $t\ge 0 $, we can now observe that 
$\x\in \R^{nd} \mapsto \rho(t,t^{-1/2}\T_t \x)$ is $1/2$ homogeneous in the time variable, i.e. $\rho(t,t^{-1/2}\T_t \x)=t^{1/2}\rho (1,\x)$.
\end{REM}

The metric introduced in \eqref{metric} is similar to the one appearing 
 in \cite{bram:cupi:lanc:prio:09}, \cite{bram:cupi:lanc:prio:13} for $L^p $ regularity.  
 
 Introducing now the strip $S:=[-T,T]\times \R^{nd} $, we then define for $\big( (s,\x),(t,\y)\big)\in S^2$ the quasi-distances:
 \begin{eqnarray}
\label{distance}
  {\mathbf d}((s,\x),(t,\y)):=\rho(t-s,\btheta_{t,s}(\x)-\y), \nonumber\\
  {\mathbf d}^*((s,\x),(t,\y)):={\mathbf d}((t,\y),(s,\x))=\rho(t-s,\x-\btheta_{s,t}(\y)),
 \end{eqnarray}
  with $\rho$ as in \eqref{metric}.
 We now define the ``balls" associated with the quasi-metric ${\mathbf d}$ (${\mathbf d}$-balls) in the following way:
 \begin{eqnarray}
\label{DEF_MAIN_BALLS}
 \forall (s,\x)\in S,\ \forall \delta>0,\nonumber\\
  B((s,\x),\delta):= \{(t,\y)\in S:  {\mathbf d}((s,\x),(t,\y)) 
 \le \delta \}.
 \end{eqnarray}
We mention that the natural extension of the balls considered in \cite{bram:cupi:lanc:prio:09}, \cite{bram:cupi:lanc:prio:13} would have been to consider ${\mathbf d}^*$ in the above definition. For this choice, in the linear, homogeneous case  $\btheta_{s,t}(\y):=\gR_{s-t}\y$, $\gR $ standing for the resolvent of the linear differential system deriving from \eqref{SYST}, which can indeed be seen as a group action. We choose here to follow the characteristic associated with the center of the ball, considering a \textit{metric tube} around it. Anyhow those choices are very close and locally equivalent, see Proposition \ref{PROP_QM}.

There is now, as in the previously mentioned works, a double difficulty, first the quasi-distance used to define the balls satisfies the quasi-triangle inequality only locally. A natural choice would then consist in considering singular integrals for the ``homogeneous space" associated to the balls of the above form, but in such case it is not clear that such balls, seen as homogeneous spaces, enjoy the doubling property, which is however satisfied on the whole strip $S=[-T,T]\times \R^{nd} $.



%

The first key-point is the following result.
\begin{PROP}
\label{PROP_QM}
Let $S:=[-T,T]\times \R^{nd}$. The space $(S,d,dtd\x) $ is a locally invariant quasi-metric space in the following sense: for a given $\Lambda \in (0,1] $
there exists a constant $C_{\ref{PROP_QM}}:=C_{\ref{PROP_QM}}(\A{A},T,\Lambda)>0$ s.t.
\begin{trivlist}
\item[a)] For all $(s,\x), (t,\y) \in S$,  if ${\mathbf d}((s,\x),(t,\y))\le \Lambda$ then 
$${\mathbf d}((t,\y),(s,\x))\le C_{\ref{PROP_QM}} {\mathbf d}((s,\x),(t,\y))=\rho(|t-s|,\btheta_{t,s}(\x)-\y)),$$ 
and for  $(\sigma,\bxi) \in S$ s.t. ${\mathbf d}((s,\x),(\sigma,\bxi))\le \Lambda $ and ${\mathbf d}((t,\y),(\sigma,\bxi))\le \Lambda $ then
$${\mathbf d}((s,\x),(t,\y))\le C_{\ref{PROP_QM}}({\mathbf d}((s,\x),(\sigma,\bxi))+{\mathbf d}((\sigma,\bxi),(t,\y))).$$
\item[b)] Every ${\mathbf d}$-ball in the sense of \eqref{DEF_MAIN_BALLS}
has positive and finite measure and every non-empty intersection of two balls has positive measure. 
\item[c)] There exists $R>0$ s.t. for $0<R_1<R_2\le R $ there exists $C:=C(R_1,R_2)$ s.t. for all $(s,\x)\in S $,
$$|B((s,\x),R_2)|\le C|B((s,\x),R_1)|, $$
where $|.| $ stands here for the Lebesgue measure of the balls. 
 \end{trivlist}
\end{PROP}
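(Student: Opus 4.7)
The plan is to reduce the three claims to two ingredients: the scale-adapted norm equivalence (\ref{EQUIV_FL}), together with the lower triangular structure of the flow's Jacobian that it encodes, and the multi-scale homogeneity of $\rho$ noted in Remark \ref{Homogeneity}. The flow identities $\btheta_{s,t}\circ\btheta_{t,s}=\mathrm{Id}$ and $\btheta_{t,s}=\btheta_{t,\sigma}\circ\btheta_{\sigma,s}$ will provide the link between $\mathbf{d}$ and $\mathbf{d}^*$ and the cocycle needed for the quasi-triangle inequality.

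For the local quasi-symmetry in (a), I would set $\mathbf{z}=\btheta_{t,s}(\x)-\y$ and observe that $\x-\btheta_{s,t}(\y)=\btheta_{s,t}(\btheta_{t,s}(\x))-\btheta_{s,t}(\y)$. A mean-value expansion gives $\x-\btheta_{s,t}(\y)=\bigl(\int_0^1 D\btheta_{s,t}(\y+\lambda \mathbf{z})\,d\lambda\bigr)\mathbf{z}$; the lower triangular structure of $D\btheta_{s,t}$, inherited from the coupling of the system (\ref{SYST}) and made quantitative by (\ref{EQUIV_FL}), together with the polynomial time-growth $|[D\btheta_{s,t}]_{ij}|\le C|t-s|^{i-j}$ for $i\ge j$, yields $|(\x-\btheta_{s,t}(\y))_i|\le C\sum_{j\le i}|t-s|^{i-j}|\mathbf{z}_j|$. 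Under the local constraint $\rho(|t-s|,\mathbf{z})\le \Lambda\le 1$, which forces both $|t-s|^{1/2}\le \rho$ and $|\mathbf{z}_j|\le \rho^{2j-1}$, each term in the sum is controlled by $\rho^{2(i-j)}\rho^{2j-1}=\rho^{2i-1}$, so $|(\x-\btheta_{s,t}(\y))_i|^{1/(2i-1)}\le C'\rho$ and hence $\rho(|t-s|,\x-\btheta_{s,t}(\y))\le C''\rho$ as required.

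For the quasi-triangle inequality, the flow composition yields
\begin{equation*}
\btheta_{t,s}(\x)-\y = \bigl(\btheta_{t,\sigma}(\btheta_{\sigma,s}(\x))-\btheta_{t,\sigma}(\bxi)\bigr)+(\btheta_{t,\sigma}(\bxi)-\y).
\end{equation*}
Using the subadditivity of $\rho$ in both arguments (a consequence of the subadditivity of $|\cdot|^{1/p}$ for $p\ge 1$), one gets $\rho(t-s,\btheta_{t,s}(\x)-\y)\le \rho(\sigma-s,\btheta_{t,\sigma}(\btheta_{\sigma,s}(\x))-\btheta_{t,\sigma}(\bxi))+\rho(t-\sigma,\btheta_{t,\sigma}(\bxi)-\y)$; the second summand is $\mathbf{d}((\sigma,\bxi),(t,\y))$ by definition, while the first is bounded by $C\mathbf{d}((s,\x),(\sigma,\bxi))$ via the same mean-value/triangular argument applied to $\btheta_{t,\sigma}$, using the local bound $|t-\sigma|^{1/2}\le \Lambda$. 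For (b), any $(t,\y)\in B((s,\x),\delta)$ satisfies $|t-s|\le \delta^2$ and $|(\btheta_{t,s}(\x))_i-\y_i|\le \delta^{2i-1}$; the translation $\y\mapsto \btheta_{t,s}(\x)-\y$ has unit Jacobian at fixed $t$, giving
\begin{equation*}
|B((s,\x),\delta)|\le \int_{(s-\delta^2)\vee(-T)}^{(s+\delta^2)\wedge T}\prod_{i=1}^n c_i\delta^{(2i-1)d}\,dt\le C\delta^{n^2d+2},
\end{equation*}
together with a matching lower bound $|B((s,\x),\delta)|\ge c\delta^{n^2d+2}$ by restricting to the half of the time interval that always lies in $[-T,T]$, valid uniformly in $(s,\x)$ for $\delta\le R$ with $R$ small enough. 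Finiteness, strict positivity, and (c) (namely $|B((s,\x),R_2)|/|B((s,\x),R_1)|\le (C/c)(R_2/R_1)^{n^2d+2}$ for $0<R_1<R_2\le R$) follow immediately. Positive measure of nonempty intersections comes from the continuity of $(t,\x)\mapsto \btheta_{t,s}(\x)$, which makes $\mathbf{d}$ continuous so that any interior point of the intersection admits an open neighborhood inside it.

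The main obstacle is part (a): passing from the global norm equivalence (\ref{EQUIV_FL}) to the coordinate-wise bounds required by $\rho$, whose $(2i-1)$-th-root structure does not match the quadratic structure of the scaled Euclidean norm. The scale matrix $\T_{t-s}$ weighs all components through a single inner product, whereas $\rho$ treats each one separately; the lower triangular structure of the flow's Jacobian (inherited from the subdiagonal coupling of $\gF$) and the local restriction $\mathbf{d}\le \Lambda\le 1$ are precisely what is needed to absorb the powers of $|t-s|^{1/2}$ into powers of $\rho$. This is also the reason why the quasi-symmetry and the quasi-triangle inequality are intrinsically local, as anticipated in the discussion preceding the proposition concerning the failure of global equivalence when $B$ has strictly upper-diagonal contributions.
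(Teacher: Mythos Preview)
Your treatment of (b) and (c) via the two-sided measure estimate $c\,\delta^{n^2d+2}\le |B((s,\x),\delta)|\le C\,\delta^{n^2d+2}$ matches the paper's argument. The genuine gap is in part (a). You assert that $D\btheta_{s,t}$ is lower triangular, with $|[D\btheta_{s,t}]_{ij}|\le C|t-s|^{i-j}$ for $i\ge j$, and hence obtain $|(\x-\btheta_{s,t}(\y))_i|\le C\sum_{j\le i}|t-s|^{i-j}|\mathbf{z}_j|$. This is false for the system under consideration. From \eqref{SYST}, $\gF_i$ depends on $\x_{i-1},\dots,\x_n$, so the \emph{full} Jacobian $\gD_\x\gF$ has nonzero entries in row $i$ for every column $j\ge i-1$; the flow derivative $D\btheta_{s,t}$ therefore carries upper-triangular contributions as well. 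Only the matrix $D\gF$ defined just after \eqref{eq:F:240409:3}, namely the \emph{subdiagonal} of the Jacobian, is strictly lower triangular, and it governs the linearized flow $\tilde\btheta^{t,\y}$, not $\btheta$. Your appeal to \eqref{EQUIV_FL} concerns precisely that linearized object and does not give triangularity of $D\btheta_{s,t}$; your phrase ``the subdiagonal coupling of $\gF$'' is a misreading of the structure of \eqref{SYST}.

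The paper (Section \ref{APP_IS}) obtains instead, via an integral identity for $\btheta_{v,s}(\x)-\btheta_{v,t}(\y)$ and Gronwall, the correct componentwise bound
\[
|(\x-\btheta_{s,t}(\y))_i|\le C\Bigl(\sum_{j=1}^{i}|(\btheta_{t,s}(\x)-\y)_j|\,|t-s|^{i-j}+\sum_{j=i+1}^{n}|(\btheta_{t,s}(\x)-\y)_j|\,|t-s|\Bigr),
\]
and the extra terms $j>i$ are exactly what breaks the global equivalence of $\mathbf d$ and $\mathbf d^*$ (cf.\ Remark \ref{SUB_D_STRUCT}). Their control genuinely requires the local hypothesis $\rho\le\Lambda\le 1$: it forces $|(\btheta_{t,s}(\x)-\y)_j|\le 1$, which allows trading the ``wrong'' exponent $1/(2i-1)$ for the ``right'' one $1/(2j-1)$ via $a^{1/(2i-1)}\le a^{1/(2j-1)}$ for $0\le a\le 1$ and $j>i$. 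Your argument, handling only the $j\le i$ contributions, is precisely the subdiagonal special case singled out in Remark \ref{REM_SUBD}; for the general drift $\gF$ of the paper it is incomplete. The same omission propagates to your quasi-triangle argument, since $\btheta_{t,\sigma}(\btheta_{\sigma,s}(\x))-\btheta_{t,\sigma}(\bxi)$ again picks up the upper-triangular Jacobian entries of $\btheta_{t,\sigma}$.
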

 
 \begin{REM}[General and Subdiagonal structure]
 \label{SUB_D_STRUCT}
Let us stress, as it will appear from the proof of Proposition \ref{PROP_QM} in Section \ref{APP_IS}, that for the general form of $\gF$ in the dynamics of $\btheta $, the constant $C_{\ref{PROP_QM}} $ appearing here depends on the specific radius, here $\Lambda \le 1$, chosen for the balls. In the following, we assume $\Lambda $ ``small enough" and refer to Section \ref{SEC_GROS_CALCULS} for a specific discussion on the choice of $\Lambda :=\Lambda(\A{A})$.

However, the proof also emphasizes that when the function $\gF $  has the following structure, $\gF_1(t,\x)=\gF_1(t,\x_1), \ \forall i\in \leftB 2,n\rightB,\  \gF_i(t,\x^{i-1,n})=\gF_i(t,\x_{i-1},\x_i)$ (subdiagonal case), then the constant $C_{\ref{PROP_QM}} $ does not depend on the radius (see Remark \ref{REM_SUBD}).
Hence, in this latter case, point a) of the proposition gives that the quasi-distances ${\mathbf d}$ and ${\mathbf d}^*$ involving respectively the forward and backward transport are actually equivalent. In such a case ${\mathbf d}$ is a usual quasi-distance in the sense of Coifman and Weiss \cite{coif:weis:71} and the strip $S$ can be seen as a homogeneous space.
\end{REM}

From Proposition \ref{PROP_QM} we can use Theorem 25 in \cite{bram:cupi:lanc:prio:09} that we now state in our specific case.
\begin{THM}[Covering Theorem]
\label{THM_COV}
For every $\delta_0>0$ and $K>1$ there exists $\delta\in (0,\delta_0)$, a positive integer $M$ and a countable set $\bigl( (s_i,\x_i)\bigr)_{i\in A}\subset S $ s.t.
\begin{trivlist}
\item[1.] $S=\bigcup_{i\in A} B((s_i,\x_i),\delta)$.
\item[2.] $\sum_{i\in A}\I_{B((s_i,\x_i),K\delta)}\le M^2$.
\end{trivlist}
\end{THM}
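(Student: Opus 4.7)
The plan is to adapt the classical Vitali-type covering construction (maximal packing of half-size balls, then dilation to get the cover) to the locally invariant quasi-metric space $(S, \mathbf{d}, dt\,d\x)$ furnished by Proposition \ref{PROP_QM}. The main subtlety is that the quasi-triangle inequality, the forward/backward equivalence, and the doubling property from that proposition are all \emph{local}, so every intermediate radius that appears during the argument must remain within the admissible regime controlled by $\Lambda$ and $R$.

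First I would fix $\delta \in (0,\delta_0)$ small enough (depending on $K$, on $C_{\ref{PROP_QM}}$, and on the thresholds $\Lambda$, $R$ of Proposition \ref{PROP_QM}) so that $K\delta$ together with a bounded number of iterated applications of $C_{\ref{PROP_QM}}$ stays below $\min(\Lambda,R)$. Using Zorn's lemma, I select a maximal family $\{(s_i,\x_i)\}_{i\in A}\subset S$ such that the smaller balls $B((s_i,\x_i),\delta/(2C_{\ref{PROP_QM}}))$ are pairwise disjoint. Each such small ball has positive and finite Lebesgue measure by part b) of the proposition, and since $S$ is $\sigma$-finite, the index set $A$ is at most countable. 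The covering property 1 is then standard: given any $(t,\y)\in S$, maximality forces $B((t,\y),\delta/(2C_{\ref{PROP_QM}}))$ to meet some $B((s_i,\x_i),\delta/(2C_{\ref{PROP_QM}}))$, and one application of the local forward/backward equivalence followed by the local quasi-triangle inequality places $(t,\y)$ inside $B((s_i,\x_i),\delta)$.

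For property 2, I would fix $(t,\y)\in S$ and count the indices $i\in A$ with $(t,\y)\in B((s_i,\x_i),K\delta)$. For each such $i$, iterating the local quasi-triangle inequality (together with the forward/backward equivalence) shows that the disjoint smaller balls $B((s_i,\x_i),\delta/(2C_{\ref{PROP_QM}}))$ are all contained in a single ball $B((t,\y), C^{*}K\delta)$ with $C^{*}$ depending only on $C_{\ref{PROP_QM}}$. The local doubling property of part c) then yields $|B((t,\y),C^{*}K\delta)| \le M_1 |B((t,\y),\delta/(2C_{\ref{PROP_QM}}))|$, and one more application of doubling (together with the local comparison of $\mathbf{d}$-balls centered at comparable points) implies that each disjoint small ball $B((s_i,\x_i),\delta/(2C_{\ref{PROP_QM}}))$ has volume bounded below by $M_2^{-1} |B((t,\y),\delta/(2C_{\ref{PROP_QM}}))|$. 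Summing volumes gives a uniform cardinality bound $M^2 := M_1 M_2$, which is the claimed overlap estimate.

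The hard part is really the bookkeeping: unlike in the translation-invariant, subdiagonal case pointed out in Remark \ref{SUB_D_STRUCT}, the constant $C_{\ref{PROP_QM}}$ itself depends on the chosen radius, so one has to verify that a single $\delta$ can be chosen small enough to make all local applications of quasi-triangle, forward/backward equivalence and doubling close up consistently. Once the dependence of $C_{\ref{PROP_QM}}$ on the radius is frozen by shrinking $\delta$ appropriately, the rest of the argument is the classical Vitali-type proof, and the constant $M$ in the overlap bound is then explicit in terms of $K$, $C_{\ref{PROP_QM}}$ and the doubling constant from Proposition \ref{PROP_QM}.
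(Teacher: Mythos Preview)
The paper does not give an independent proof of this covering theorem: it simply invokes Theorem~25 of \cite{bram:cupi:lanc:prio:09}, having verified in Proposition~\ref{PROP_QM} that $(S,\mathbf{d},dt\,d\x)$ satisfies the axioms of a locally invariant quasi-metric space required there. Your proposal, by contrast, reconstructs the Vitali-type argument that underlies that cited result, so you are effectively supplying the proof the paper outsources.

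Your outline is correct in spirit and matches the standard argument. One point deserves tightening: in the overlap estimate you write that ``one more application of doubling'' yields a uniform lower bound $|B((s_i,\x_i),\delta/(2C_{\ref{PROP_QM}}))|\ge M_2^{-1}|B((t,\y),\delta/(2C_{\ref{PROP_QM}}))|$. Doubling at a \emph{fixed} center, which is all that part~c) of Proposition~\ref{PROP_QM} literally asserts, does not by itself compare balls with \emph{different} centers. What is actually available (and what the paper establishes in the proof of Proposition~\ref{PROP_QM} in Section~\ref{APP_IS}) is the stronger uniform two-sided volume bound $C_1^{-1}\delta^{2+n^2d}\le |B((s,\x),\delta)|\le C_1\delta^{2+n^2d}$, valid for all centers. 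With this in hand your volume-summation step goes through cleanly, and the rest of your bookkeeping (shrinking $\delta$ so that all iterated applications of the local quasi-triangle and forward/backward equivalence stay within the admissible range $\min(\Lambda,R)$) is exactly the care required in the non-globally-homogeneous setting.
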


  \subsection{Singular kernel and associated estimates}   
\label{PROOF_CZ}

Let us first define for $s>0$, $\varsigma(-s):=\varsigma(s) $, i.e. we symmetrize the diffusion coefficient.
Fix now $T>0$ and introduce: $$\forall 
(s,t,\x,\y) \in \R^2 \times (\R^{nd})^2, k\bigl(s,t,\x,\y  \bigr) :=
\I_{t>s} D_{\x_1}^2 \tilde q(s,t,\x,\y).$$ 
From \eqref{AFFINE} and \eqref{DEF_KERN} a direct computation yields (see also the proof of Lemma 5.5 in \cite{dela:meno:10}): 
\begin{eqnarray}
\label{reg_kernel}
k(s,t,\x,\y)&=&
\I_{t>s} \left(- [\tilde \gR^{t,\y}(t,s)^*\tilde \K^\y(s,t)^{-1}\tilde \gR^{t,\y}(t,s)]_{11}\right. \nonumber\\
&&\left.+[\tilde \gR^{t,\y}(t,s)^*\tilde \K^\y(s,t)^{-1}(\tilde \btheta_{t,s}^{t,\y}(\x)-\y)  ]_1^{\otimes 2}\right) \tilde q(s,t,\x,\y).
\end{eqnarray}
In the above equation, for a matrix $\gM \in \R^{nd}\otimes \R^{nd}$ (resp. a vector $\z\in \R^{nd}$), the notation $[\gM]_{11}$ stands for the $d\times d $ submatrix $(\gM_{ij})_{(i,j)\in \leftB 1,d\rightB} $ (resp. $[\z]_1$ stands for the subvector of $\R^d, (\z_i)_{i\in\leftB 1,d \rightB} $). 

From \eqref{DEF_KERN}, \eqref{GSP} and the scaling Lemma \ref{scaling_lemma} (see also equations (5.10), (5.11) in \cite{dela:meno:10}), we  have that there exists $C:=C(T,\A{A})$ s.t.:
\begin{eqnarray*}
|[\tilde \gR^{t,\y}(t,s)^*\tilde \K^\y(s,t)^{-1}(\tilde \btheta_{t,s}^{t,\y}(\x)-\y)  ]_i|\le C\left(  (t-s)^{-i+1}|\T_{t-s}^{-1}(\tilde \btheta_{t,s}^{t,\y}(\x)-\y)|  \right),\\
|[\tilde \gR^{t,\y}(t,s)^*\tilde \K^\y(s,t)^{-1}\tilde \gR^{t,\y}(t,s)]_{11}+[\tilde \gR^{t,\y}(t,s)^*\tilde \K^\y(s,t)^{-1}(\tilde \btheta_{t,s}^{t,\y}(\x)-\y)  ]_1^{\otimes 2}|\\
\le C((t-s)^{-1}|I_d|+|\T_{t-s}^{-1}(\tilde \btheta_{t,s}^{t,\y}(\x)-\y)|^2),
\end{eqnarray*}
so that \eqref{GSP}, \eqref{reg_kernel} yield that  $\exists (c_{\ref{CTR_SING}},C_{\ref{CTR_SING}}):=(c_{\ref{CTR_SING}},C_{\ref{CTR_SING}})(T,\A{A}) $ s.t.
\begin{eqnarray}
\label{CTR_SING}
 |k(s,t,\x,\y)|&\le& C_{\ref{CTR_SING}}
 \I_{t>s} (t-s)^{-1} q_{c_{\ref{CTR_SING}}}(s,t,\x,\y),
 \end{eqnarray}
where for all $c>0$,
$$q_{c}(s,t,\x,\y):=\frac{c^{nd/2}}{(2\pi)^{nd/2}(t-s)^{n^2d/2}} \exp\left(-\frac{c}2 (t-s)|\T_{t-s}^{-1}(\tilde \btheta^{t,\y}_{t,s}(\x)-\y)|^2 \right).$$

Observe that this is the same order of singularity than in the non-degenerate case. This is anyhow expectable since we are considering the derivatives w.r.t. the non-degenerate variables. 
From equation \eqref{CTR_SING} we get that for all $\epsilon \in (0,1),\ \forall (i,j)\in \leftB 1,d\rightB^2,\ f\in L^\infty(\R\times \R^{nd})$, $(s,\x)\in \R^{1+nd} $,
\begin{equation}
\label{DEF_KIJ_EPS}
K_{ij}^\epsilon f(s,\x):=\int_{S\cap {\mathbf d}((s,\x),(t,\y))>\epsilon} dt d\y k_{ij}(s,t,\x,\y) f(t,\y),\ \forall (i,j)\in \leftB 1,d\rightB^2,
\end{equation}
is well defined.

\subsection{Proof of Theorem \ref{EST_CZ}}
\label{SUBS_EST_CZ}

The first thing to do consists in splitting the kernel into a singular and a non singular part observing that the singularity is \textit{diagonal}. Specifically, for a given fixed $\delta>0$ there exist $(c,C):=(c,C)(T,\A{A},\delta)$  s.t. if for $\big( (s,\x),(t,\y)\big)\in S^2, \ {\mathbf d}((s,\x),(t,\y))=\rho(t-s,\btheta_{t,s}(\x)-\y)\ge \delta  $, then from \eqref{CTR_SING} we have:
\begin{equation}
\label{CTR_HD}
|k(s,t,\x,\y)|\I_{\rho(t-s,\btheta_{t,s}(\x)-\y)\ge \delta}\le C q_c(s,t,\x,\y).
\end{equation}
Indeed, from the definitions in \eqref{distance}, we have either $|t-s|^{1/2}\ge \delta/ (n+1) $ or that there exists 
$i\in \leftB 1,n\rightB$ s.t. $|(\btheta_{t,s}(\x)-\y)_i|^{1/(2i-1)}\ge \delta/(n+1) $. Thus:
\begin{trivlist}
\item[-] If $|t-s|^{1/2}\ge \delta/ (n+1)  $, there is no singularity in \eqref{CTR_SING} and \eqref{CTR_HD} holds.
\item[-] If there exists 
$i\in \leftB 1,n\rightB, \ |(\btheta_{t,s}(\x)-\y)_i|^{1/(2i-1)}\ge \delta/(n+1) $ we have from \eqref{CTR_SING}:
\begin{eqnarray*}
|k(s,t,\x,\y)|\I_{\rho(t-s,\btheta_{t,s}(\x)-\y)\ge \delta}\\
\le \frac{C_{\ref{CTR_SING}}(n+1)^2}{\delta^2} \frac{|(\btheta_{t,s}(\x)-\y)_i|^{2/(2i-1)}}{t-s}q_{c_{\ref{CTR_SING}}}(s,t,\x,\y)\\
\le\frac{C_{\ref{CTR_SING}}(n+1)^2}{\delta^2}((t-s)|\T_{t-s}^{-1}(\btheta_{t,s}(\x)-\y)|^2)^{1/(2i-1)}q_{c_{\ref{CTR_SING}}}(s,t,\x,\y)\\
\overset{\eqref{EQUIV_FL}}{\le} \frac{C_{\ref{CTR_SING}} C_{\ref{EQUIV_FL}} (n+1)^2}{\delta^2}((t-s)|\T_{t-s}^{-1}(\x-\btheta_{s,t}(\y))|^2)^{1/(2i-1)}q_{c_{\ref{CTR_SING}}}(s,t,\x,\y)\\
\overset{\eqref{EQUIV_FL}}{\le} \frac{C_{\ref{CTR_SING}} C_{\ref{EQUIV_FL}}^2(n+1)^2}{\delta^2}((t-s)|\T_{t-s}^{-1}(\tilde \btheta_{t,s}^{t,\y}(\x)-\y)|^2)^{1/(2i-1)}q_{c_{\ref{CTR_SING}}}(s,t,\x,\y),
\end{eqnarray*}
which again yields \eqref{CTR_HD} from the definition of $q_{c_{\ref{CTR_SING}}} $ after \eqref{CTR_SING}.
\end{trivlist}

Let us now write:
\begin{eqnarray}
K_{ij}^\epsilon f(s,\x)=\int_{S\cap {\mathbf d}((s,\x),(t,\y))>\epsilon} dt d\y k_{ij}(s,t,\x,\y)f(t,\y)\eta_{\delta}(t-s,\btheta_{t,s}(\x)-\y)\notag\\
+\int_{S\cap {\mathbf d}((s,\x),(t,\y))>\epsilon} dt d\y k_{ij}(s,t,\x,\y)f(t,\y)(1-\eta_{\delta})(t-s,\btheta_{t,s}(\x)-\y)\notag \\
:=K_{ij}^{\epsilon,d} f(s,\x)+K_{ij}^{\epsilon,\infty} f(s,\x),\label{THE_DEC_PRES_LOIN}
 \end{eqnarray}
where $\eta_{\delta} $ is a smooth non-negative cut-off function s.t. for all $(u,z)\in \R\times \R^{nd},\ \eta_\delta(u,z)=1 $ if $\rho(u,z)\le \delta $ and $\eta_\delta(u,z)=0 $ if $\rho(u,z)\ge 2\delta$.  
It is then easily seen from \eqref{CTR_HD}, 
that for all $f\in L^p(S),\ p\in [1,+\infty]$, 
\begin{equation}
\label{CTR_KER_HD_LP}
\|K_{ij}^{\epsilon,\infty}f\|_{L^p(S)}\le C_{\ref{CTR_KER_HD_LP}} \|f\|_{L^p(S)},\ C_{\ref{CTR_KER_HD_LP}}:=C_{\ref{CTR_KER_HD_LP}}(T,\A{A},\delta,p). 
\end{equation}
The singular part of the kernel requires a much more subtle handling. Setting $k_{ij}^{d}(s,t,\x,\y)=\eta_\delta(t-s,\btheta_{t,s}(\x)-\y)k_{ij}(s,t,\x,\y)$ we will prove the following proposition.
\begin{PROP}[Calder\'on-Zygmund Kernel]
\hspace*{.2cm}

\vspace*{.2cm}
\label{CZ_KER_SING}
\begin{trivlist}
\item[\textit{i)}] $\exists C_{\ref{CZ_KER_SING}}:=C_{\ref{CZ_KER_SING}}(T,\A{A},\delta), \ \forall \big( (s,\x),(t,\y) \big)\in S^2,\  |k_{ij}^{d}(s,t,\x,\y)|\le \frac{C_{\ref{CZ_KER_SING}}}{{\mathbf d}((s,\x),(t,\y))^{n^2 d+2}}.$
\item[\textit{ii)}] There exists a constant $c_{\ref{CZ_KER_SING}} $ s.t.  
\begin{eqnarray*}
|k_{ij}^{d}(s,t,\x,\y)-k_{ij}^{d}(\sigma,t,\bxi,\y)|\le C_{\ref{CZ_KER_SING}}
\{\frac{{\mathbf d}((s,\x),(\sigma,\bxi))^\eta}{{\mathbf d}((s,\x),(t,\y))^{n^2 d+2+\eta}}+\frac{1}{{\mathbf d}((s,\x),(t,\y))^{n^2d+2-\eta}}\}
, 
\end{eqnarray*}
$\forall  \big( (s,\x),(\sigma,\bxi) \big)\in S^2,\ c_{\ref{CZ_KER_SING}}{\mathbf d}((s,\x),(\sigma,\bxi))\le {\mathbf d}((s,\x),(t,\y))\le \Lambda$ for some $\Lambda \le 1 $ that will be specified later on.
\item[\textit{iii)}] The two previous ``standard estimates" hold for the adjoint kernel $$k_{ij}^{d,*}(s,t,\x,\y):=k_{ij}^{d}(t,s,\y,\x). $$
\item[\textit{iv)}] Cancellation Property:
\begin{eqnarray*}
\sup_{\epsilon>0}|\int_{{\mathbf d}^*( (s,\x),(t,\y))> \epsilon}k_{ij}^{d}(s,t,\x,\y)dtd\y|\\
+\sup_{\epsilon>0}|\int_{ {\mathbf d}^*( (s,\x),(t,\y))> \epsilon}k_{ij}^{d,*}(s,t,\x,\y)dtd\y|<+\infty.
\end{eqnarray*}
Also the limits: 
$$\lim_{\epsilon\rightarrow 0}\int_{ {\mathbf d}^*( (s,\x),(t,\y))> \epsilon}k_{ij}^{d}(s,t,\x,\y)dtd\y,\ \lim_{\epsilon\rightarrow 0}\int_{ {\mathbf d}^*( (s,\x),(t,\y))> \epsilon}k_{ij}^{d,*}(s,t,\x,\y)dtd\y $$
exist and are finite for almost every $(s,\x)\in S$.
\end{trivlist}
\end{PROP}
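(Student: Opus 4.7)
The strategy is to verify the four Calderón–Zygmund kernel properties by reducing everything to the pointwise Gaussian bound \eqref{CTR_SING} for $k$, the equivalences \eqref{EQUIV_FL} between the various flows, and the scaling homogeneity of the metric $\rho$ pointed out in Remark \ref{Homogeneity}. The underlying principle throughout is that a time singularity of order $(t-s)^{-1-n^2d/2}$ translates into a metric singularity of order ${\mathbf d}^{-(n^2d+2)}$, and that the cutoff $\eta_\delta$ restricts every argument to the small regime ${\mathbf d}((s,\x),(t,\y))\le 2\delta\le 2\Lambda$ where all the locally invariant structures of Proposition \ref{PROP_QM} are available.

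For $(i)$, I split into cases according to which term in $\rho(t-s,\btheta_{t,s}(\x)-\y)=|t-s|^{1/2}+\sum_i|(\btheta_{t,s}(\x)-\y)_i|^{1/(2i-1)}$ dominates. If $|t-s|^{1/2}$ dominates then \eqref{CTR_SING} directly gives the desired estimate. If instead the $i$-th spatial component dominates, I use \eqref{EQUIV_FL} to pass from $\tilde\btheta^{t,\y}_{t,s}(\x)-\y$ to $\btheta_{t,s}(\x)-\y$ inside the Gaussian exponent of $q_{c_{\ref{CTR_SING}}}$ and absorb the extra powers of $(t-s)^{-1}$ coming from the spatial component into the Gaussian factor by the elementary bound $r^a e^{-c r^2}\le C_a$.

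For $(ii)$, I write $k_{ij}^d(s,t,\x,\y)-k_{ij}^d(\sigma,t,\bxi,\y)$ as an integral of partial derivatives along a path from $(s,\x)$ to $(\sigma,\bxi)$, chosen to follow a characteristic of the ODE $\dot\btheta=\gF(\cdot,\btheta)$ so that the $\mathbf d$-distance to $(t,\y)$ varies tamely and the quasi-triangle inequality of Proposition \ref{PROP_QM}(a) is available throughout. Differentiating each ingredient of $k_{ij}^d$ — the resolvent $\tilde\gR^{t,\y}$, the covariance $\tilde\K^\y$, the affine flow $\tilde\btheta^{t,\y}_{t,s}$ via \eqref{AFFINE}, the exponential factor of $\tilde q$, and the cutoff $\eta_\delta$ — and applying the scaling controls \eqref{EQUIV_FL} produces two types of contributions. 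The smooth ones, after interpolation with the size bound from $(i)$, yield the standard Hölder factor ${\mathbf d}((s,\x),(\sigma,\bxi))^\eta/{\mathbf d}((s,\x),(t,\y))^{n^2d+2+\eta}$. The remaining contributions carry the $\eta$-Hölder modulus coming ultimately from the regularity of $D_{\x_{i-1}}F_i$ in \A{S} propagated through the flow, and together with the metric equivalence ${\mathbf d}\sim{\mathbf d}^*$ on the ball of radius $\Lambda$ they account for the weaker term $1/{\mathbf d}((s,\x),(t,\y))^{n^2d+2-\eta}$. The careful bookkeeping of these derivatives — and in particular ensuring that the chosen path keeps ${\mathbf d}((s,\x),(t,\y))$ and ${\mathbf d}^*((s,\x),(t,\y))$ comparable via Proposition \ref{PROP_QM}(a) for the whole parametrization — is the main technical obstacle and is where the constant $c_{\ref{CZ_KER_SING}}$ in the separation hypothesis is calibrated.

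Item $(iii)$ then follows from $(i)$ and $(ii)$ combined with the local equivalence ${\mathbf d}\sim{\mathbf d}^*$ on balls of radius $\Lambda$ and a symmetric differentiation with the roles of $(s,\x)$ and $(t,\y)$ exchanged, which can be carried out by the same characteristic-following argument. For the cancellation property $(iv)$, the starting observation is that $\int_{\R^{nd}} D^2_{\x_1,ij}\tilde q(s,t,\x,\y)\,d\y=0$ for each fixed $s<t$, since $\tilde q(s,t,\x,\cdot)$ is a probability density by \eqref{FROZ}; equivalently, after integration by parts, $\int_{\R^{nd}} \eta_\delta\, D^2_{\x_1,ij}\tilde q\,d\y = \int_{\R^{nd}} (D^2_{\x_1,ij}\eta_\delta)\,\tilde q\,d\y$, which is bounded uniformly in $s<t$ since $\eta_\delta$ is smooth. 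I then split the truncation $\{\mathbf d^*((s,\x),(t,\y))>\epsilon\}$ into the full spatial slices $\{|t-s|>\epsilon^2\}$, where the full spatial integral is available and the cancellation applies, and the residual thin slab $\{|t-s|\le\epsilon^2\}$, where the size bound from $(i)$ together with a volume estimate of the annulus $\{{\mathbf d}^*\le\epsilon\}$ of order $\epsilon^{n^2d+2}$ gives a negligible contribution. Uniformity in $\epsilon$ and the existence of the limit as $\epsilon\to 0$ then follow by dominated convergence. The adjoint statement is obtained analogously after exchanging the roles of the variables. I expect reconciling the non-spatial truncation $\mathbf d^*>\epsilon$ with the purely spatial cancellation in $\y$ — and in particular controlling the residual slab estimate uniformly in $(s,\x)$ — to be the most delicate step in $(iv)$.
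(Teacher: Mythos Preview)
Your outline for point \textit{(i)} matches the paper's argument closely and is fine.

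There is, however, a genuine error in your treatment of \textit{(iv)}. You claim that $\int_{\R^{nd}} D^2_{\x_1,ij}\tilde q(s,t,\x,\y)\,d\y=0$ because ``$\tilde q(s,t,\x,\cdot)$ is a probability density by \eqref{FROZ}''. This is false: the frozen process $\tilde\X^{t,\y}$ in \eqref{FROZ} has its drift and covariance built from the linearization around $\btheta_{\cdot,t}(\y)$, so the coefficients themselves depend on the \emph{final} point $\y$. Consequently $\tilde q(s,t,\x,\y)$ is the diagonal evaluation $p^{t,\y}(s,t,\x,\y)$ of a family of densities indexed by $\y$, not a single transition density, and $\int \tilde q(s,t,\x,\y)\,d\y$ is neither $1$ nor constant in $\x$. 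Your splitting into $\{|t-s|>\epsilon^2\}$ versus $\{|t-s|\le\epsilon^2\}$ therefore does not produce a cancelling integral on the large slab. The paper's route is substantially different: it changes variables to $\z=\x-\btheta_{s,t}(\y)$, then \emph{mollifies} $D\gF$ at scale $(t-s)^{1/8}$ so that the covariance $\tilde\H^{t,\btheta_{t,s}(\x-\z),(t-s)*}(s,t)$ becomes differentiable in $\z$; the regularization error and the extra terms from differentiating the mollified covariance both carry integrable time singularities, and only after this can the divergence theorem be applied on the level sets $\{\rho(t-s,\z)=\epsilon\}$ and $\{\rho(t-s,\z)=\delta\}$, with the boundary integrals controlled by homogeneity.

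Your sketch of \textit{(iii)} also misses a step that is not symmetric to \textit{(ii)}. For the adjoint kernel one must compare $k_{ij}^d(t,s,\y,\x)$ with $k_{ij}^d(t,\sigma,\y,\bxi)$, and now the two Gaussian kernels involve covariance matrices $\tilde\K^\x(t,s)$ and $\tilde\K^\bxi(t,\sigma)$ frozen at \emph{different} spatial points $\x$ and $\bxi$. This forces an additional term in the decomposition (the paper's $\{D^d_{i,j}\}_2$) in which the freezing parameter in the covariance jumps; it is handled by rescaling the covariance as in Lemma \ref{scaling_lemma} and invoking the $\eta$-H\"older regularity of $D\gF$ on the rescaled resolvents, yielding a bound of the type \eqref{CT_RESOLV}. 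Simple symmetry with the local equivalence ${\mathbf d}\sim{\mathbf d}^*$ does not produce this.

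Finally, for \textit{(ii)} your description is in the right spirit (mean-value along a path separating time and space increments) but omits the mechanism that actually drives the estimate. The paper chooses the specific intermediate point $\tilde\btheta^{t,\y}_{\sigma,s}(\x)$, rescales onto the unit level set $\Sigma_1$ of $\rho$, and relies on a dedicated linearization-error lemma (Lemma \ref{LEMME_STAB}) to show that when the time component of the rescaled coordinates is small, some spatial component $|\tilde\x_{j_0}|$ is bounded below and the Gaussian factor absorbs the excess singularity. The second term $1/{\mathbf d}^{n^2d+2-\eta}$ in the statement comes precisely from this linearization error $\cR^\rho_{t,s}(\x,\y)$, not from ``the regularity of $D_{\x_{i-1}}F_i$ propagated through the flow'' in the way you describe.
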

\begin{REM}
\label{REM_LAMBDA}
Let us emphasize that the constant $\Lambda $ in point \textit{ii)} is rather arbitrary since it is only needed to split the singular and non-singular part of the kernel. In practice, we will choose $\Lambda=2\delta$ small enough to suitably control the linearization \eqref{AFFINE} of the initial deterministic differential system \eqref{FORWARD_FLOW}. 
Let us as well mention that it is precisely this linearization error that also yields the second term in the r.h.s. of point \textit{ii)}, which is not \textit{usual} in the estimates for a singular kernel, but gives  an integrable singularity, w.r.t. the singular control of point \textit{i)}. This term could have been avoided by modifying the definition of the singular kernel at hand which would anyhow have seemed more complicated and less \textit{natural}.
We refer to Lemma \ref{LEMME_STAB} and the proof of Proposition \ref{CZ_KER_SING} in Section \ref{SEC_GROS_CALCULS} for details.
\end{REM}

The strategy is now to exploit those estimates to derive $L^p$ controls on the covering of $S$ with the ${\mathbf d}$-balls introduced in the Theorem \ref{THM_COV}. But to do so, we have to carefully check that the cancellation property appearing in Proposition \ref{CZ_KER_SING} for the whole space still holds on the metric balls. This property can be conserved thanks to a H\"older continuous cut-off as in Proposition 18 from \cite{bram:cupi:lanc:prio:09}. Namely, from Proposition \ref{CZ_KER_SING} it can be derived similarly to the previous reference that:
\begin{PROP}[Localized Cancellation]
\label{LOC_CANC} There exists a constant $R_0>0$ s.t. for $(s_0,\x_0)\in S,\ R\le R_0 $, if $a,b$ stand for two cut-off functions belonging to $C^\alpha(\R^{n+1},\R),\ \alpha>0$ and with support in $B((s_0,\x_0),R) $, then defining 
\begin{eqnarray*}
 k_{ij}^{d,{\rm loc}}(s,t,\x,\y)&:=&a(s,\x)k_{ij}^{d}(s,t,\x,\y)b(t,\y),\\
 k_{ij}^{d,*,{\rm loc}}(s,t,\x,\y)&:=&a(s,\x)k_{ij}^{d,*}(s,t,\x,\y)b(t,\y), 
 \end{eqnarray*}
we have that: 
\begin{trivlist} 
\item[-] $k_{ij}^{d,{\rm loc}},\ k_{ij}^{d,*,{\rm loc}}$ satisfy the first three points of Proposition \eqref{CZ_KER_SING} and for all $(s,\x)\in B((s_0,\x_0),R) $:
\begin{eqnarray*}
\sup_{\epsilon>0}|\int_{(t,\y)\in B((s_0,\x_0),R),\ {\mathbf d}^*( (s,\x),(t,\y))> \epsilon}k_{ij}^{d,{\rm loc}}(s,t,\x,\y)dtd\y|\\
+\sup_{\epsilon>0}|\int_{(t,\y)\in B((s_0,\x_0),R),\ {\mathbf d}^*( (s,\x),(t,\y))> \epsilon}k_{ij}^{d,*,{\rm loc}}(s,t,\x,\y)dtd\y|<+\infty.
\end{eqnarray*} 
\item[-] For almost all $(s,\x) \in B((s_0,\x_0),R) $ the limits
\begin{eqnarray*}
\lim_{\epsilon\rightarrow 0}\int_{(t,\y)\in B((s_0,\x_0),R),\ {\mathbf d}^*( (s,\x),(t,\y))> \epsilon}k_{ij}^{d,{\rm loc}}(s,t,\x,\y)dtd\y,\\ \lim_{\epsilon\rightarrow 0}\int_{(t,\y)\in B((s_0,\x_0),R),\ {\mathbf d}^*( (s,\x),(t,\y))> \epsilon}k_{ij}^{d,*,{\rm loc}}(s,t,\x,\y)dtd\y
\end{eqnarray*}
exist and are finite.
\end{trivlist}
\end{PROP}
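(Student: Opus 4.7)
The plan is to follow the strategy of Proposition 18 in \cite{bram:cupi:lanc:prio:09}, transferring each required property from the global kernel $k_{ij}^{d}$ (Proposition \ref{CZ_KER_SING}) to the localized kernels $k_{ij}^{d,\mathrm{loc}}$ and $k_{ij}^{d,*,\mathrm{loc}}$, exploiting the boundedness and $C^\alpha$ regularity of the cutoffs $a,b$ together with the local quasi-metric structure of Proposition \ref{PROP_QM}.

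Points (i) and (iii) are routine: since $a,b$ are bounded, the pointwise bound $C_{\ref{CZ_KER_SING}}/\mathbf{d}^{n^2d+2}$ transfers directly to $k_{ij}^{d,\mathrm{loc}}$ and, by symmetry, to the adjoint $k_{ij}^{d,*,\mathrm{loc}}$. For point (ii) I would split
\begin{equation*}
k_{ij}^{d,\mathrm{loc}}(s,t,\x,\y)-k_{ij}^{d,\mathrm{loc}}(\sigma,t,\bxi,\y) = a(s,\x)b(t,\y)\bigl[k_{ij}^{d}(s,t,\x,\y)-k_{ij}^{d}(\sigma,t,\bxi,\y)\bigr] + \bigl[a(s,\x)-a(\sigma,\bxi)\bigr]b(t,\y)\,k_{ij}^{d}(\sigma,t,\bxi,\y).
\end{equation*}
The first summand inherits the bound from Proposition \ref{CZ_KER_SING}(ii), while the second is bounded by $\|b\|_\infty\|a\|_{C^\alpha}\mathbf{d}((s,\x),(\sigma,\bxi))^\alpha / \mathbf{d}((\sigma,\bxi),(t,\y))^{n^2d+2}$. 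In the working regime $c_{\ref{CZ_KER_SING}}\mathbf{d}((s,\x),(\sigma,\bxi)) \le \mathbf{d}((s,\x),(t,\y)) \le \Lambda$, the local quasi-triangle inequality of Proposition \ref{PROP_QM}(a) gives that $\mathbf{d}((\sigma,\bxi),(t,\y))$ is comparable to $\mathbf{d}((s,\x),(t,\y))$, so this contribution is absorbed with an effective Hölder exponent $\min(\alpha,\eta)$. The symmetric splitting with $a\leftrightarrow b$ delivers point (iii).

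The heart of the proof is (iv). Following the decomposition
\begin{equation*}
a(s,\x)b(t,\y)=a(s,\x)b(s,\x)+a(s,\x)\bigl[b(t,\y)-b(s,\x)\bigr],
\end{equation*}
the truncated ball integral splits as $I_1^\epsilon+I_2^\epsilon$. For $I_2^\epsilon$ the Hölder continuity of $b$ supplies a factor $\mathbf{d}((s,\x),(t,\y))^\alpha$ that lowers the effective singularity to order $n^2d+2-\alpha$; combining Proposition \ref{PROP_QM}(c) dyadically on $\mathbf{d}^*$-annuli centered at $(s,\x)$ with the support restriction $\mathbf{d}\le 2\delta$ inherited from the cutoff $\eta_\delta$, one gets absolute integrability uniformly in $\epsilon$ and a finite limit as $\epsilon\downarrow 0$ by dominated convergence. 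For $I_1^\epsilon$ I write
\begin{equation*}
\int_{B((s_0,\x_0),R),\,\mathbf{d}^*>\epsilon} k_{ij}^{d}\,dt d\y = \int_{\mathbf{d}^*>\epsilon}k_{ij}^{d}\,dt d\y - \int_{B((s_0,\x_0),R)^c,\,\mathbf{d}^*>\epsilon}k_{ij}^{d}\,dt d\y.
\end{equation*}
The first integral is uniformly bounded in $\epsilon$ and converges as $\epsilon\downarrow 0$ by the global cancellation of Proposition \ref{CZ_KER_SING}(iv). The tail over $B^c$ is treated using the support restriction $\mathbf{d}((s,\x),(t,\y))\le 2\delta$ coming from $\eta_\delta$, combined with the Hölder estimate (ii) of Proposition \ref{CZ_KER_SING} applied between suitable reference points on $\partial B((s_0,\x_0),R)$, after fixing $R_0$ small enough so that all invocations of the local quasi-triangle inequality remain inside the $\Lambda$-regime. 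The adjoint $k_{ij}^{d,*,\mathrm{loc}}$ is handled by exactly the same scheme after interchanging the roles of $a$ and $b$.

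The main obstacle is precisely this tail estimate: the quasi-triangle inequality of Proposition \ref{PROP_QM}(a) is only local, and $(s,\x)$ can sit arbitrarily close to $\partial B((s_0,\x_0),R)$, so one cannot directly derive a lower bound on $\mathbf{d}^*((s,\x),(t,\y))$ for $(t,\y)\in B^c$. The workaround, mirroring \cite{bram:cupi:lanc:prio:09}, is a Whitney-type decomposition of the critical annular region $B^c\cap\{\epsilon<\mathbf{d}^*<\Lambda\}$, in which one absorbs the boundary-proximity issue into an extra Hölder factor coming from the cutoffs (analogous to the treatment of $I_2^\epsilon$) and then applies both (ii) and (iv) of Proposition \ref{CZ_KER_SING} termwise; the threshold $R_0$ is then calibrated as a function of $\A{A}$, $T$, $\Lambda$, $C_{\ref{PROP_QM}}$ and the cutoff parameter $\delta$ so as to make every local invocation legitimate. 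Once the tail is tamed, the uniform bound in $\epsilon$ and the existence of the limit for almost every $(s,\x)\in B((s_0,\x_0),R)$ follow by combining the controls on $I_1^\epsilon$ and $I_2^\epsilon$.
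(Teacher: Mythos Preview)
Your approach is essentially the same as the paper's: the paper does not give a detailed proof of Proposition~\ref{LOC_CANC} at all, but simply states that it ``can be derived similarly to'' Proposition~18 of \cite{bram:cupi:lanc:prio:09} from Proposition~\ref{CZ_KER_SING}, which is precisely the route you outline. Your sketch of the standard estimates (i)--(iii) via the additive splitting, and of the cancellation (iv) via the decomposition $a(s,\x)b(t,\y)=a(s,\x)b(s,\x)+a(s,\x)[b(t,\y)-b(s,\x)]$ together with the global cancellation and a tail estimate on $B^c$, is exactly the argument of that reference adapted to the present quasi-metric; the identification of the boundary-proximity issue and its resolution through a dyadic/Whitney-type layering is also in line with \cite{bram:cupi:lanc:prio:09} and \cite{bram:10}.
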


 Now from Propositions \ref{CZ_KER_SING} and \ref{LOC_CANC} we derive from Theorem 3 in \cite{bram:10},  recalling from Proposition \ref{PROP_QM} that ${\mathbf d}$ and ${\mathbf d}^*$ are equivalent on ``quasi" metric balls, that for every $(s_0,\x_0)\in S,\ R\le R_0,\ p\in (1,+\infty)$, there exists a constant $C_{p,T,\A{A}}$ independent of $(s_0,\x_0) $ s.t. setting 
 $$T_{ij}f(s,\x)
 :=\lim_{\epsilon \rightarrow 0}\int_{(t,\y)\in B((s_0,\x_0),R),\ {\mathbf d}^*( (s,\x),(t,\y))> \epsilon}k_{ij}^{d,{\rm loc}}(s,t,\x,\y)f(t,\y)dtd\y, $$
$$\|T_{ij} f\|_{L^p(B((s_0,\x_0),R))}\le C_{p,T,\A{A}}\|f\| _{L^p(B((s_0,\x_0),R))}.$$

The covering Theorem \ref{THM_COV} then gives, similarly to the proof of Theorem 22 in \cite{bram:cupi:lanc:prio:09}, that setting $K_{ij}^d f(s,\x):=\lim_{\epsilon\rightarrow 0}K_{ij}^{\epsilon,d} f(s,\x) $, for every $p\in (1,+\infty)$, there exists a constant $C_{p,T,\A{A}}$ s.t. $\|K_{ij}^df\|_{L^p(S)}\le C_{p,T,\A{A}}\|f\| _{L^p(S)}$.
Combining this control with  equations \eqref{THE_DEC_PRES_LOIN} and \eqref{CTR_KER_HD_LP} eventually yields that
  $$\|D_{\x_1^i,\x_1^j}^2\tilde G f\|_{L^p(S)}\le C_{p,T,\A{A}}\|f\| _{L^p(S)},$$
 up to a modification of $C_{p,T,\A{A}}$.
This concludes the proof of Theorem \ref{EST_CZ} under \A{A}.

\mysection{Derivation of Theorem \ref{THM_M} from the Calder\'on-Zygmund estimates}
\label{APP_MT}
\subsection{Well posedness of the martingale problem}
Existence can be obtained by usual compactness arguments, see e.g. Theorem 6.1.7 in \cite{stro:vara:79} that can be adapted to the current framework. We will therefore focus on uniqueness.

The strategy is the following. We first prove the well-posedness of the martingale problem under the local condition \eqref{LOC_COND} for $T>0$ small enough. Still under \eqref{LOC_COND}, we then get rid of 
the small time constraint thanks to a chaining/gluing argument (see e.g. Chapter 6 in \cite{stro:vara:79}, and Chapter 4.6 in Ethier Kurtz \cite{ethi:kurz:97} or Chapter 4.11 in Kolokoltsov \cite{kolo:11} in the more general framework of c\`adl\`ag processes). We eventually derive the well-posedness on the whole space thanks to the localization results in Priola \cite{prio:15}.


\subsubsection{Well Posedness with Local Condition}
In this section we must adapt carefully the arguments in Chapter 7 of \cite{stro:vara:79}, who consider a zero or bounded drift term. In our model the drift is simply crucial. In the linear case, we can refer to the work of Priola \cite{prio:15}, who derived through resolvents the well posedness under \eqref{LOC_COND} for an arbitrary time. The nonlinear drift $\gF$ yields, for the linearization to be efficient, additional small time-constraints.

There are two key steps to derive uniqueness. 
The first one is the following Lemma.
\begin{LEMME}
\label{EST_UNIF_MP}
For $T>0$ small enough and under \eqref{LOC_COND}, whenever  $\P$ solves the martingale problem associated with $(L_t)_{t\in [0,T]} $, for every $p>(n^2d+2)/2 $, there exists $C_{\ref{EST_UNIF_MP}}:=C_{\ref{EST_UNIF_MP}}(p,\A{A})$  s.t. for all
 $(s,\x)\in [0,T]\times \R^{nd},\ f\in C_0^\infty([s,T]\times \R^{nd}) 
 $:
\begin{equation}
\bigg|\E^{\P_{s,\x}}\bigg[\int_s^T f(t,\X_t) dt\bigg]\bigg|\le C_{\ref{EST_UNIF_MP}}
\|f\|_{L^p([s,T)\times \R^{nd})}.\label{BD_LP_LOC}
\end{equation}
\end{LEMME}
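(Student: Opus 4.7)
The plan is to invert the operator $(\partial_s + L_s)$ on $L^p([0,T)\times \R^{nd})$ via Theorem \ref{EST_CZ}, produce the candidate $u := \tilde G\circ(I-R)^{-1}f$, identify $u(s,\x)$ with the expectation $\E^{\P_{s,\x}}[\int_s^T f(t,\X_t)dt]$ by testing the martingale property against $u$, and then conclude using the pointwise bound \eqref{POINT_W_CTR}. This mirrors the formal derivation sketched in Section \ref{CZ_FORMAL}; the work lies in rigorously justifying the identification of $u(s,\x)$ with the expectation under $\P_{s,\x}$.

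First I would fix $p>(n^2d+2)/2$ and choose the parameters in \eqref{LOC_COND} so that $\varepsilon_a<(2C_{\ref{EQ_EST_CZ}})^{-1}$ and $T\le T_0(\A{A})$ small enough that, by \eqref{CTR_R_OPINV}, $\|R\|_{L^p\to L^p}\le 1/2$. Then $(I-R)^{-1}$ exists as a bounded operator on $L^p([0,T)\times \R^{nd})$ with norm at most $2$. For any $f\in C_0^\infty([s,T)\times \R^{nd})$ we set $g:=(I-R)^{-1}f\in L^p$ and $u:=\tilde G g$. By \eqref{POINT_W_CTR}, $u$ is pointwise bounded with $\|u\|_\infty \le 2C_{\ref{POINT_W_CTR}}T^{1-(2+n^2d)/(2p)}\|f\|_{L^p}$, and, by construction, $(\partial_s + L_s)u = -f$ in an $L^p$ sense with $u(T,\cdot)=0$.

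The second step is to establish, for any martingale solution $\P_{s,\x}$, the representation
\[
u(s,\x)=\E^{\P_{s,\x}}\!\left[\int_s^T f(t,\X_t)\,dt\right].
\]
I would proceed by regularization: approximate $g$ in $L^p$ by $g_n\in C_0^\infty$ and set $u_n:=\tilde G g_n$, which is smooth and decays at infinity since $\tilde q$ is a smooth Gaussian-type kernel. Each $u_n$ satisfies $(\partial_s+\tilde L_s^{\cdot,\cdot})u_n=-g_n$ classically in the linearized sense, hence $(\partial_s+L_s)u_n=-g_n+Rg_n+\text{(error)}_n$ with $\|\text{error}_n\|_{L^p}\to 0$. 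Applying the martingale property to $u_n\in C_0^{1,2}$ yields
\[
u_n(s,\x)=\E^{\P_{s,\x}}\!\left[\int_s^T\bigl(g_n-Rg_n-\text{(error)}_n\bigr)(t,\X_t)\,dt\right].
\]
Passing to the limit $n\to\infty$ then gives the identification for $u$, provided the expectations of $L^p$ functions along $(\X_t)$ can be controlled. For that we need an \emph{a priori} Krylov-type bound along $\P_{s,\x}$, which is precisely what we are trying to prove; one circumvents this circularity in the standard way (cf.\ Chapter 7 in \cite{stro:vara:79}) by first adding a non-degenerate perturbation $\eta\Delta_\x$ to $L_s$, so that the perturbed solution $\P_{s,\x}^\eta$ admits by \cite{stro:vara:79} the desired Krylov control uniformly in $\eta$, then running the argument for $\P_{s,\x}^\eta$, and finally letting $\eta\downarrow 0$ using tightness and continuity of the Green representation in $\eta$.

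The main obstacle is exactly this justification of the martingale identity for a non-classical solution $u$ of a truly degenerate Cauchy problem. The degeneracy precludes a direct appeal to It\^o--Krylov's formula, while the linearization inherent in $\tilde q$ produces the remainder term $R$ that must be absorbed via the contraction argument above. Once the representation $u(s,\x)=\E^{\P_{s,\x}}[\int_s^T f(t,\X_t)dt]$ is validated, \eqref{BD_LP_LOC} follows from the pointwise bound on $u$ together with $\|g\|_{L^p}\le 2\|f\|_{L^p}$, giving $C_{\ref{EST_UNIF_MP}}=2C_{\ref{POINT_W_CTR}}T^{1-(2+n^2d)/(2p)}$, which depends only on $p$ and \A{A}.
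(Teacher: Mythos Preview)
You correctly locate the crux of the matter---the circularity in passing to the limit $u_n\to u$ inside the expectation---but the proposed resolution via an $\eta\Delta_\x$ perturbation does not work as stated, and is not in fact what Chapter~7 of \cite{stro:vara:79} does for the analogous Lemma~7.1.4. Two concrete problems: (i) after adding $\eta\Delta_\x$ the operator is uniformly elliptic in $\R^{nd}$, but the drift $\gF$ has linear growth, so you are outside the bounded-drift setting of \cite{stro:vara:79}, Chapter~7, and cannot simply invoke their Krylov estimate; (ii) more seriously, uniqueness for $L$ is not yet established, so there is no reason the solution $\P^\eta$ to the perturbed problem converges to your \emph{given} $\P$ as $\eta\downarrow0$. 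To force this you would have to represent $\X$ under $\P$ as an SDE and build $\X^\eta$ on the same space---and once you have that representation, the paper's route is shorter and avoids the detour.

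The paper's argument approximates the \emph{process} rather than the test function. One first writes the canonical process as $d\X_t=\gF(t,\X_t)dt+BA^{1/2}(t)d\beta_t$ with $A_t=a(t,\X_t)$, then mollifies and time-discretizes $A$ to obtain step coefficients $A_{\varepsilon,N}$ that are $\F_{\phi(t)}$-measurable, and solves the corresponding SDE for $\bxi^{\varepsilon,N}$. The key point is that $\bxi^{\varepsilon,N}$ has a density \emph{a priori} (iterated conditioning against the Gaussian densities of the frozen model), so the linear functional $f\mapsto\E[\int_s^T f(t,\bxi_t^{\varepsilon,N})dt]$ is already bounded on $L^p$. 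One then applies the identity
\[
\tilde Gf(s,\x)=\E\Bigl[\int_s^T f(t,\bxi_t^{\varepsilon,N})\,dt\Bigr]-\E\Bigl[\int_s^T \bar R_\Sigma f(t,\bxi_t^{\varepsilon,N})\,dt\Bigr]
\]
together with \eqref{CTR_R_OPINV} and \eqref{POINT_W_CTR}; since the functional is known to be bounded, a contraction argument (absorbing the $\bar R_\Sigma$ term, whose $L^p\to L^p$ norm is $<1$) yields the desired bound with a constant independent of $\varepsilon,N$. Finally, because $\gF$ is Lipschitz, Gronwall's lemma gives $\E[\sup_t|\bxi_t^{\varepsilon,N}-\X_t|^2]\to0$, and the bound passes to $\X$ for $f\in C_0^\infty$.

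Conceptually: approximating the coefficients produces an auxiliary process that already enjoys a Krylov bound, breaking the circularity at the level of the process; approximating the test function does not, because it leaves you needing the very estimate you seek in order to pass to the limit.
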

Observe that this means that, for every solution of the martingale problem, 
the associated canonical process has a density whose $L^{q} $ norm, where $q^{-1}+p^{-1}=1, p,q>1 $, is uniformly controlled. 

\begin{PROP}
\label{PROP_FIN_EX_LOC}
Under \A{A}, if $T>0$ is small enough and the local condition \eqref{LOC_COND} is fulfilled, then the martingale problem is well posed on $[0,T]$ and if $\P_{s,\x} $ stands for the associated family of solutions, using the notations of Section \ref{CZ_FORMAL}, we have for all $(s,\x)\in [0,T)\times \R^{nd}, f\in C_0^\infty([s,T]\times \R^{nd})$:
$$\E^{\P_{s,\x}}\bigg[\int_s^T f(t,\X_t)dt\bigg]=\tilde G\circ (I-R)^{-1} f(s,\x).$$
Also, if there exists $\bar \x\in \R^{nd}$ s.t. 
$$\varepsilon_{a,\infty}:=\sup_{t\ge 0}\sup_{\x\in \R^{nd}} |a(t,\x)-a(t,\bar \x)|, $$
is small, then the martingale problem is well posed on $\R^+$.
\end{PROP}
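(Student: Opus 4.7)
The plan is to promote the operator-theoretic solvability of the Cauchy problem given by Theorem \ref{EST_CZ} to a probabilistic representation, from which uniqueness for the martingale problem follows. For $T$ small enough so that \eqref{CTR_R_OPINV} gives $\|R\|_{L^p\to L^p}<1$ for some $p>(n^2d+2)/2$, the operator $I-R$ is invertible on $L^p([0,T)\times \R^{nd})$. Given $f\in C_0^\infty([0,T)\times \R^{nd})$, set $g:=(I-R)^{-1}f\in L^p$ and $u:=\tilde Gg$. The derivation preceding \eqref{CTR_PREAL_INV} formally yields $(\partial_s+L_s)u=-f$ with $u(T,\cdot)=0$, while \eqref{POINT_W_CTR} provides pointwise boundedness of $u$ and Theorem \ref{EST_CZ} controls $\|D_{\x_1}^2 u\|_{L^p}$.

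For any solution $\P_{s,\x}$ of the martingale problem the core step is to establish
\[ u(s,\x)=\E^{\P_{s,\x}}\!\Big[\int_s^T f(t,\X_t)\,dt\Big]. \]
Since $u$ is only known to have its second $\x_1$-derivative in $L^p$, I would mollify: choose $g_\epsilon\in C_0^\infty$ with $g_\epsilon\to g$ in $L^p$ and set $u_\epsilon:=\tilde Gg_\epsilon$. The smoothness of $\tilde q$ off the diagonal makes $u_\epsilon$ a bounded $C^{1,2}$ function, and using $(\partial_t+L_t)u_\epsilon=-g_\epsilon+Rg_\epsilon$, It\^o's formula under $\P_{s,\x}$ yields
\[ u_\epsilon(s,\x)=\E^{\P_{s,\x}}\!\Big[\int_s^T \bigl(g_\epsilon-Rg_\epsilon\bigr)(t,\X_t)\,dt\Big]. \]
The left-hand side converges pointwise to $u(s,\x)$ via \eqref{POINT_W_CTR} applied to $g_\epsilon-g$; for the right-hand side, since $(I-R)g_\epsilon\to f$ in $L^p$, Lemma \ref{EST_UNIF_MP} (estimate \eqref{BD_LP_LOC}) lets us pass to the limit.

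This identity forces $\E^{\P_{s,\x}}[\int_s^T f(t,\X_t)\,dt]$ to be the same for every solution and every $f\in C_0^\infty$, hence the entire space-time occupation measure of $(\X_t)_{t\in[s,T]}$ is uniquely determined. A conditioning/chaining argument as in Chapter 6 of \cite{stro:vara:79} then upgrades this to uniqueness of all finite-dimensional marginals and thus of the law on $C([0,T],\R^{nd})$. For the second assertion, under $\varepsilon_{a,\infty}<C_{\ref{EQ_EST_CZ}}^{-1}$ the local condition \eqref{LOC_COND} holds on all of $\R^+$ with $\varsigma(t):=a(t,\bar\x)$, so I would iterate the local result on successive intervals $[kT_0,(k+1)T_0]$ and glue the unique pieces by the strong Markov property. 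The main obstacle is the It\^o step: $u$ is not $C^{1,2}$ in the degenerate directions, so passing from $u_\epsilon$ back to $u$ only works because Lemma \ref{EST_UNIF_MP} provides, for every candidate solution, an $L^q$-regular occupation density\,---\,exactly the regularity needed to turn the $L^p$-convergence $(I-R)g_\epsilon\to f$ into meaningful convergence along the process.
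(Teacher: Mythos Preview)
Your proof is correct and follows essentially the same route as the paper. Both arguments reduce to the identity $\tilde Gh(s,\x)=\E^{\P_{s,\x}}\bigl[\int_s^T (I-R)h(t,\X_t)\,dt\bigr]$ for smooth $h$, combined with the $L^p$ bound of Lemma \ref{EST_UNIF_MP} to pass to the limit, and then inversion of $I-R$. The only organizational difference is that the paper writes the identity with $h=f\in C_0^\infty$, reads it as $\tilde Gf(s,\x)=\langle (I-R)f,\varphi\rangle$ against the occupation density $\varphi\in L^{q}$ furnished by Lemma \ref{EST_UNIF_MP}, and then extends by $L^p$-continuity and density of $(I-R)C_0^\infty$; you instead first set $g=(I-R)^{-1}f$, mollify $g$, and pass to the limit on the right-hand side directly via \eqref{BD_LP_LOC}. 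These are dual formulations of the same argument. The gluing step for the extension to $\R^+$ under the global smallness of $\varepsilon_{a,\infty}$ is likewise identical.
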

The proof of Lemma \ref{EST_UNIF_MP} and Proposition \ref{PROP_FIN_EX_LOC} are postponed to Section \ref{TEC_SEC_MP_WP} to better emphasize the various steps required for the proof.

\subsubsection{Derivation of the Global Well-Posedness}

Now, without the local condition \eqref{LOC_COND}, the continuity assumed in \A{A} allows to localize. Precisely, 
it is  possible to consider a countable covering of $E:=[0,+\infty)\times \R^{nd}=\cup_{i\in \N} \cG_i^\delta$
where 
$$\cG_i^\delta:= [(s_i-\delta)\vee 0, s_i+\delta] \times B(\x_i,\delta) ,$$
for $(s_i,\x_i)\in \R^+\times \R^{nd}  $ and 
 $\delta>0$ s.t. $2\delta\le T $, for $T$ as in the previous paragraph, and  
\begin{eqnarray*}
\sup_{(t,\y)\in \cG_i^\delta}|a(t,\y)-a(t,\x_i)|\le \varepsilon_a,
\end{eqnarray*}
for a sufficiently small $\varepsilon_a $ (condition \eqref{LOC_COND}). This statement can be proved by compactness arguments. In the homogeneous case, we refer to the proof of Theorem 6 p. 263 in \cite{prio:15}.  Under \A{C}, those arguments extend to the current framework. 
Define then $\forall  (t,\y)\in \R^+\times \R^{nd},\ \tilde a_i(t,\y)=a(t,\y)\I_{(t,\y)\in \cG_i^\delta}+ (1-\I_{(t,\y)\in \cG_i^\delta})a(s_i,\x_i)$.
Denoting by $\tilde L_i $ the generator of \eqref{SYST} associated with diffusion coefficient $\tilde a_i$, since $\varepsilon_{\tilde a_i,\infty}\le \varepsilon_a $ which can be chosen small enough thanks to the continuity assumption, we have from Proposition \ref{PROP_FIN_EX_LOC} that the martingale problem is well posed for $ \tilde L_i$. Also, $\tilde L_i=L $ on $\cG_i^\delta $. The same holds if we add the time derivative in the operators (in order to take into account the inhomogeneity) or equivalently if we consider the time space processes. We can then conclude to global uniqueness from Theorem 26 in \cite{prio:15} taking as state space $E $. 

\subsubsection{Proof of Lemma \ref{EST_UNIF_MP} and Proposition \ref{PROP_FIN_EX_LOC}}
\label{TEC_SEC_MP_WP}

\textbf{Proof of Lemma \ref{EST_UNIF_MP}}
\begin{trivlist}
\item[-] \textbf{Step 1.} Let us introduce for given $(s,\x)\in [0,T]\times \R^{nd}$ 
and a measurable function $\varsigma:[0,T]\rightarrow {\mathcal S}^d $ satisfying \A{UE}
the process,
$$\bar \X_t^{s,\x,\varsigma}:=\x+\int_s^t \gF(u,\bar \X_u^{s,\x,\varsigma})du+\int_s^t B\varsigma^{1/2}(u) d\beta_u,$$
defined on some filtered probability space $({\mathcal E},\F,(\F_t)_{t\ge 0},\mu) $ on which $(\beta_u)_{u\ge 0} $ is a Brownian motion. 

The previous dynamics corresponds to a modification of \eqref{SYST} where we consider a deterministic non-degenerate non-homogeneous diffusion coefficient $\varsigma^{1/2}$. Observe that under \A{A}, we can derive from Theorem 1.1 in \cite{dela:meno:10} that for all $t>s$, $\bar \X_t^{s,\x,\varsigma} $ has a multiscale Gaussian density $\bar p^\varsigma(s,t,\x,\cdot) $. Precisely, $\bar p^\varsigma(s,t,\x,\y) $ satisfies \eqref{equiv_dens} with $\tilde \btheta_{t,s}^{t,\y}(\x) $ replaced by $\btheta_{t,s}(\x) $ solving \eqref{FORWARD_FLOW}.
\item[-] \textbf{Step 2}: Let $\pi^N([0,T]) $ be the partition of the interval $[0,T] $ with time-step $h:=T/N,\ N\in \N^* $. Define as well for $u\in [0,T] $, $\phi(u):=\{t_i:=ih, t_i\le u<t_{i+1}\} $ (i.e. $\phi(u) $ is the largest discretization time lower or equal than $u$). Let $(\Omega,\F,(\F_t)_{t\ge 0},\P) $ be a filtered probability space on which $\beta $ is a $d$-dimensional Brownian motion.
For given $(s,\x)\in [0,T]\times \R^{nd}$, let $(\bxi_t)_{t\in [s,T]} $ solve:
\begin{eqnarray*}
\bxi_t=\x+\int_s^t \gF(u,\bxi_u)du+\int_{s}^t B\Sigma_ud\beta_u,
\end{eqnarray*}
where the coefficient $\Sigma_u $ is $\F_{\phi(u)} $- measurable.
Conditioning iteratively w.r.t. the $(\F_{t_i})_{i\in \leftB h^{-1}\phi(s), h^{-1}T\rightB}$, it can be easily deduced that $(\bxi_t)_{t\in (s,T]} $ has a density. It can indeed be written as a convolution of the densities introduced in Step 1.
If we additionally assume that, setting $A_u=\Sigma_u\Sigma_u^*$, the following local condition holds: there exists a measurable function $\varsigma:[0,T]\rightarrow {\mathcal S}^d$  satisfying \A{UE} s.t.
\begin{equation}
\label{LOC_COND_AL}
\sup_{(u,\omega)\in [0,T]\times \Omega}\|\varsigma(u)-A_u(\omega)\|\le \varepsilon_a,
\end{equation}
for a small enough $\varepsilon_a $, the point is now to establish that  for all $(s,\x)\in [0,T]\times \R^{nd},f \in C_0^\infty([0,T)\times \R^{nd}) 
,\ p>(n^2 d+2)/2,
$ there exists  $C_p:=C_p(\A{A})$ s.t:
\begin{eqnarray*}
|\E[\int_s^T f(t,\bxi_t)dt]|\le C_p\|f\|_{L^p([s,T)\times \R^{nd})}.
\end{eqnarray*}
Recall from \eqref{green} 
that for $\tilde G f(s,\x)=\int_s^T dt\int_{\R^{nd}}d\y f(t,\y)\tilde q(s,t,\x,\y)$, where $\tilde q(s,t,\x,\y) $ stands for the density of $(\tilde \X_u^{t,\y})_{u\ge s} $ introduced in \eqref{FROZ},
we have:
\begin{eqnarray*}
\partial_s \tilde Gf(s,\x)+\tilde Mf(s,x)=-f(s,\x),\ (s,\x)\in [0,T)\times \R^{nd},
\end{eqnarray*}
for $\tilde Mf(s,\x)=\int_s^Tdt\int_{\R^{nd}}d\y \tilde L_s^{t,\y}\tilde q(s,t,\x,\y)f(t,\y)$.
Denoting by $L_t^\bxi $ the generator of $\bxi $ at time $t$, let us then write:
\begin{eqnarray}
\E[\tilde G f(s,\bxi_s)]
&=&-\E[\int_s^T \{\partial_t \tilde G f(t,\bxi_t)+\tilde M f(t,\bxi_t)\}dt]\nonumber\\
&&-\E^\P[\int_s^T  \{L_t^\bxi \tilde G f(t,\bxi_t)-\tilde Mf(t,\bxi_t)   \}dt]\nonumber\\
&=:&\E^\P[\int_s^T f(t,\bxi_t)dt]-\E^\P[\int_s^T  \bar R_\Sigma f(t,\bxi_{t})dt]\label{CTR_SCHEMA_EUL}.
\end{eqnarray}
For $(t,\z)\in [0,T)\times\R^{nd}$, the term $\bar R_\Sigma f(t,\z)=L_t^\bxi \tilde Gf(t,\z)-\tilde Mf(t,\z) $ can be controlled, thanks to \eqref{LOC_COND_AL}, exactly as the contribution $Rf(t,\z) $ in Section \ref{CZ_FORMAL} (see equation \eqref{CTR_R_OPINV}). 

From equation \eqref{CTR_SCHEMA_EUL}, we derive from \eqref{CTR_R_OPINV} and \eqref{POINT_W_CTR} that:
\begin{eqnarray*}
|\E^\P[\int_s^T f(t,\bxi_t)dt]|\\
\le [ C_{\ref{POINT_W_CTR}}T^{1-(n^2d+2)/(2p)}+\frac{\varepsilon_a}{2}C_{\ref{EQ_EST_CZ}}+C(p,\A{A})T^{\eta/2} ]\|f\|_{L^p([s,T)\times \R^{nd})}.
\end{eqnarray*}
We have thus proved the estimate \eqref{BD_LP_LOC} of Lemma \ref{EST_UNIF_MP} for processes of the form $(\bxi_t)_{t\ge 0} $.
\item[-]
\textbf{Step 3.} It now remains to extend the previous control to an arbitrary solution $\P $ of the martingale problem on $[0,T] $ under the local condition \eqref{LOC_COND}. To this end, for a	 progressively measurable $A:[0,T]\times \Omega\rightarrow {\mathcal S^d} $ satisfying \eqref{LOC_COND_AL},
we set for a family of non-negative mollifiers $(\zeta_\varepsilon)_{\varepsilon>0} $ with compact support on $\R$,  i.e. $\int_{\R}\zeta_\varepsilon(t)dt=1,\ \zeta_\varepsilon \in C_0^\infty(\R)$:
\begin{eqnarray*}
A_\varepsilon(t):=\int_0^t \zeta_\varepsilon(t-s)A(s)ds,\ 
\varsigma_\varepsilon(t):=\int_0^t \zeta_\varepsilon(t-s)\varsigma(s)ds,\ t\ge 0.
\end{eqnarray*}
For $N\ge 1$, define now $ A_{\varepsilon,N}(t):=A_\varepsilon(\phi(t)),\varsigma_{\varepsilon,N}(t):=\varsigma_\varepsilon(\phi(t)) $ where $\phi(t) $ denotes as in \textbf{Step 2} the largest discretization time lower or equal to $T$ for the time-step $h=T/N $. It is clear that the pair $(A_{\varepsilon,N}, \varsigma_{\varepsilon,N}) $ satisfies \eqref{LOC_COND_AL}.
Also,
$\lim_{\varepsilon\downarrow 0 }\lim_{N}\E^\P[\int_s^T\|A_{\varepsilon,N}(t)-A(t)\|^2 dt]=0.$
Let now $\X$ be the canonical process associated with $\P$ and set $A_u:=a(u,\X_u) $. Introduce then for $t\ge s$, $\beta_t:=\int_s^t A^{-1/2}(u)d\X_u^1-\int_{s}^t A^{-1/2}(u)\gF_1(u,\X_u)du $.  Then $\beta $ is a Brownian motion after time $s$ (see e.g. Theorem 4.5.1 in \cite{stro:vara:79} that extends to the current framework) and:
$$\X_t=\X_s+\int_s^t \gF(u,\X_u)du+\int_{s}^t B A^{1/2}(u)d\beta_u,\ t\ge s.$$
Set 
$$\bxi_t^{\varepsilon,N}=\X_s+\int_s^t \gF(u,\bxi_u^{\varepsilon,N})du+\int_s^tBA_{\varepsilon,N}^{1/2}(u)d\beta_u,\ t\ge s. $$
From \textbf{Step 2}, we derive that for all $f\in C_0^\infty([0,T)\times \R^{nd}) $:
$$|\E^\P[\int_s^T f(t,\bxi_t^{\varepsilon,N})dt]|\le C_p\|f\|_{L^p([s,T)\times \R^{nd})}.$$
On the other hand, since $\gF $ is globally Lipschitz, we get from Gronwall's Lemma and Doob's inequality
\begin{eqnarray*}
\E^\P[\sup_{t\in [s,T]}|\bxi_t^{\varepsilon,N}-\X_t|^2]\le C_p \E^\P[\int_s^T |A_{\varepsilon,N}(t)-A(t)|dt].
\end{eqnarray*}
Thus,
\begin{eqnarray*}
\big|\E^\P[\int_s^Tf(t,\X_t)dt]\big|=\lim_{\varepsilon\downarrow 0}\lim_N\big|\E[\int_s^T f(t,\bxi_t^{\varepsilon,N})dt]\big|\le C_p\|f\|_{L^p([s,T)\times \R^{nd})},
\end{eqnarray*}
which completes the proof.
\end{trivlist}

\textbf{Proof of Proposition \ref{PROP_FIN_EX_LOC}.}

Let now $\P$ be an arbitrary solution to the martingale problem. From Lemma \ref{EST_UNIF_MP} we get that  for all $ (s,\x) \in [0,T)\times \R^{nd}$, there exists $\varphi\in L^{q}([s,T)\times \R^{nd}), q\in (1,1+\frac{2}{n^2d}) $ s.t.:
$$\E^{\P_{s,\x}}[\int_s^Tf(t,\X_t)dt]=\int_s^T dt \int_{\R^{nd}} f(t,\y) \varphi(t,\y)d\y,$$
for all $f\in C_0^{\infty}([0,T]\times \R^{nd})$. On the other hand: 
\begin{eqnarray*}
\tilde Gf(s,\x)&=&\E^{\P_{s,\x}}[\int_s^T f(t,\X_t)dt]-\E^{\P_{s,\x}}[\int_s^T Rf(t,\X_t)dt]\\
&=&\int_s^T dt\int_{\R^{nd}} (I-R)f(t,\y)\varphi(t,\y)d\y.
\end{eqnarray*}
Since both sides are continuous w.r.t. the $L^p$ convergence and that $I-R$ is invertible on $L^p$ (see eq. \eqref{CTR_R_OPINV}) we conclude that:
\begin{eqnarray*}
\E^{\P_{s,\x}}[\int_s^T f(t,\X_t)dt]=\int_s^T dt\int_{\R^{nd}} f(t,\y)\varphi(t,\y)d\y=\tilde G\circ(I-R)^{-1}f(s,\x).
\end{eqnarray*}
This gives uniqueness on $[0,T] $.
Set now $\P^1=\P $, the unique solution of the martingale problem on $[0,T] $. If $\varepsilon_{a,\infty} $ is small, we have as well that, for all $i\ge 2 $, there is a unique solution $\P^i $ to the martingale problem on $[(i-1)T,iT]=:{\mathcal I}_i $, i.e. given $(s,\x)\in {\mathcal I}_{i}
\times \R^{nd}$ there is a unique $\P_{s,\x}^i $ on $C([0,\infty),\R^{nd}),{\mathcal B}(C([0,\infty),\R^{nd}))) $ s.t.
$\P_{s,\x}^i [\X_t=\x, 0\le t\le s]$ and $ f(\X_{t\wedge iT})-\int_s^{t\wedge iT} L_u f(\X_u)du$ is a $\P_{s,\x}^i $ martingale after time $s$ for all $f\in C_0^\infty(\R^{nd}) $. 
For fixed $(s,\x) \in [0,T]\times \R^{nd}$, define inductively $\Q^1_{s,\x}:=\P_{s,\x}^1 $, $\Q_{s,\x}^N:=\Q^{N-1}_{s,\x}\otimes_{(N-1)T} \P_{(N-1)T,\X_{(N-1)T}}^N,\ N\ge 2 $.
It can be shown from the well posedness of the martingale problem on each time interval $(\mathcal{I}_i)_{i\in \N^*}$ and Theorem 6.1.2 in \cite{stro:vara:79}, considering the $(iT)_{i\ge 1}$ as stopping times,  that the martingale problem is, under the local condition \eqref{LOC_COND} well posed on $[0,+\infty) $. Roughly speaking, the $\P^i$ can be ``glued" together yielding global well posedness.


\subsection{Existence of the Density and Associated Estimates}
The goal of this section is to prove the statement \eqref{krylov} of Theorem \ref{THM_M}.
To this end, we will need the following result which extends to our current degenerate setting Theorem 9.1.9 in \cite{stro:vara:79} in \textit{small} time.

\begin{THM}[Local existence of the density and associated estimates]
Assume $T\le T_0(\A{A})\le 1 $ as in Theorem \ref{EST_CZ}. 

\label{THM_LOC}
\begin{trivlist}
\item[-] If the diffusion coefficient $a$ is uniformly continuous, then, for $0\le s< T$,  
 the unique weak solution of \eqref{SYST} admits a density in the following sense. Letting  $P(s,t,\x,.)$ be the transition probability determined by $(L_t)_{t\ge 0}$, then for a given $T\in (0,T_0]$,  almost all $t\in (s,T] $ and all $\Gamma \in {\mathcal B}(\R^{nd})$,   $P(s,t,\x,\Gamma) =\int_\Gamma p(s,t,\x,\y) d\y$.
Also, for $q\in [1,2)$, the density $p$ satisfies:
\begin{eqnarray*}
\biggl(\int_s^T dt(t-s)^\alpha\int_{\R^{nd}} d\y|p(s,t,\x,\y)|^q \biggr)^{1/q}\le C_{\ref{THM_LOC}}^1(1+|\x|),
\end{eqnarray*}
where $\alpha=\left(\frac{(n^2d+2)}{2} \right)(q-1) $ and $ C_{\ref{THM_LOC}}^1:=C_{\ref{THM_LOC}}^1(T,q,\A{A},\delta_T)$,  denoting for all $\varepsilon>0 $, $\delta_T(\varepsilon):={\rm argmax}_{\zeta\in \R^+}\{ \sup_{s\in [0,T], |\x-\y|\le \zeta}|a(s,\x)-a(s,\y)|<\varepsilon\} $ the modulus of continuity of $a$. 
\item[-] For all $0\le s< T$  and $q\in [1,2)$, $\delta>0 $,
\begin{eqnarray*}
\biggl(\int_s^T dt\int_{\R^{nd}\backslash B^E(\btheta_{t,s}(\x),\delta)} d\y|p(s,t,\x,\y)|^q \biggr)^{1/q}\le C_{\ref{THM_LOC}}^2(1+|\x|),
\end{eqnarray*}
with $C_{\ref{THM_LOC}}^2:=C_{\ref{THM_LOC}}^2(T,q,\A{A},\delta_T,\delta)$, $B^E(\btheta_{t,s}(\x),\delta) $ standing for the Euclidean ball of $\R^{nd} $ with radius $\delta $ and center $\btheta_{t,s}(\x) $, recalling $\btheta_{t,s}(\x)=\x+\int_s^t \gF(u,\btheta_{u,s}(\x))du $ (i.e. $\btheta_{t,s}(\x) $ is the solution at time $t$ of the deterministic differential system associated with \eqref{SYST} starting from $\x$ at time $s$).
\end{trivlist}
\end{THM}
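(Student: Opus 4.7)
The plan is to adapt~\cite[Theorem~9.1.9]{stro:vara:79} to the current degenerate framework, using~\eqref{krylov}, the pointwise Gaussian estimate~\eqref{equiv_dens} on the proxy $\tilde q$, and the operator identity $Gf = \tilde G\circ (I-R)^{-1}f$ from Proposition~\ref{PROP_FIN_EX_LOC}. Fixing $p_0 > (n^2d+2)/2$ and setting $q_0 := p_0/(p_0-1)$, a standard duality/Riesz representation argument applied to~\eqref{krylov} first extends the positive functional $f \mapsto \E^{\P_{s,\x}}[\int_s^T f(t,\X_t)\,dt]$ to a measure on $[s,T]\times\R^{nd}$ which is absolutely continuous w.r.t. Lebesgue measure, with density $p(s,\cdot,\x,\cdot) \in L^{q_0}([s,T] \times \R^{nd})$ of norm $\le C(1+|\x|)$. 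Fubini's theorem then yields absolute continuity of $P(s,t,\x,\cdot)$ for a.e.~$t \in (s,T]$, which is the existence assertion.

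For the weighted $L^q$ bound with $q \in [1,2)$, the specific weight $\alpha=(n^2d+2)(q-1)/2$ is precisely the scaling exponent dictated by $\T_{t-s}$: from~\eqref{equiv_dens} one has $\|\tilde q(s,t,\x,\cdot)\|_{L^q(\R^{nd})}^q \le C(t-s)^{-n^2d(q-1)/2}$, so that $\int_s^T (t-s)^\alpha \|\tilde q(s,t,\x,\cdot)\|_{L^q}^q\,dt \lesssim \int_s^T (t-s)^{q-1}\,dt < \infty$ for every $q \ge 1$. I would then transfer this pointwise-in-$t$ Gaussian control from $\tilde q$ to $p$ through the identity $Gf = \tilde G\circ (I-R)^{-1}f$, upgrading the $L^{p_0}$-continuity of $I-R$ furnished by~\eqref{CTR_R_OPINV} to a continuity in a suitably weighted-in-time $L^q$ space: the time-integrability bounds~\eqref{CTR_NF}--\eqref{CTR_DR} together with Theorem~\ref{EST_CZ} give the required kernel controls. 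Taking $q$-th roots of the resulting estimate preserves the linear growth in $|\x|$ inherited from~\eqref{krylov}.

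For the off-diagonal control, I would start from
$$\int_{\R^{nd} \setminus B^E(\btheta_{t,s}(\x),\delta)} p(s,t,\x,\y)\,d\y = \P_{s,\x}\bigl[\,|\X_t - \btheta_{t,s}(\x)| > \delta\,\bigr],$$
and combine this mass bound with the $L^{q_0}$ control of the first step via interpolation. A Bernstein-type inequality on the non-degenerate component together with a Grönwall-type iteration transferring the first-component concentration to the degenerate ones through $\gF$ (using the Lipschitz structure of the drift and the equivalences~\eqref{EQUIV_FL}) yields an exponential tail for $\X_t - \btheta_{t,s}(\x)$ whose decay is faster than any polynomial in $(t-s)^{-1}$ as $t\downarrow s$; this makes the complement-of-ball contribution integrable in $t$ \emph{without} any weight, delivering the second half of the theorem after a final interpolation. \emph{The main obstacle} is the second step, in the full range $q \in [1,2)$: the bare duality applied to~\eqref{krylov} reaches only $q \le q_0 = (n^2d+2)/n^2d$, which is strictly smaller than $[1,2)$ as soon as $n^2d > 2$; closing this gap genuinely requires propagating the sharp pointwise Gaussian bound on $\tilde q$ through $(I-R)^{-1}$ in the weighted-in-time $L^q$ norm rather than relying on~\eqref{krylov} alone, and is the technical heart of the argument; the analogous sharp concentration in Step~3 is the secondary difficulty.
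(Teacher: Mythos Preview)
There are two structural gaps in your plan. First, you invoke~\eqref{krylov} as an input, but in the paper~\eqref{krylov} is \emph{derived from} Theorem~\ref{THM_LOC}: the proof of~\eqref{krylov} chains Theorem~\ref{THM_LOC} along a time grid and explicitly uses its second statement to control the ``far'' contribution $T_F$. So your very first step is circular. Second, and more fundamentally, the identity $Gf=\tilde G\circ(I-R)^{-1}f$ from Proposition~\ref{PROP_FIN_EX_LOC} is only available under the local condition~\eqref{LOC_COND}, whereas Theorem~\ref{THM_LOC} is stated for an arbitrary uniformly continuous $a$. You therefore cannot ``transfer the Gaussian bound on $\tilde q$ to $p$ through $(I-R)^{-1}$'' globally: the operator $I-R$ need not be invertible, and the representation $p\leftrightarrow\tilde q$ simply does not exist without first reducing to the small-oscillation regime. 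This reduction is precisely the missing idea.

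The paper's route is a localization along characteristic tubes. One covers $\R^{nd}$ by cubes $Q_\k$ of side $\sim\gamma$, with $\gamma$ chosen via the modulus $\delta_T$ so that $a$ varies by at most $\varepsilon/2$ along time sections of the flow image of each $Q_\k$; one then partitions $[s,T]$ into subintervals of length $h^\k\sim(1+|\k\gamma+\x|)^{-1}$ so that the local condition holds on each ``crown'' $\mathcal C_{t_i^\k,t_{i+1}^\k,\beta\gamma}$. On these crowns the slowly-varying machinery (Lemmas~\ref{REG_FIRST_LOC_COND}--\ref{LOC_LEM2}, which are the degenerate analogues of~\cite[\S9.1]{stro:vara:79}) gives the needed $L^\rho$ bound; a Bernstein-along-the-flow argument---of exactly the Gr\"onwall-plus-martingale type you sketch in Step~3---yields exponential decay in $|\k|$; interpolation and the double sum over $i,\k$ give~\eqref{PRELIM_ESTIM_DES}. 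The factor $(1+|\x|)$ appears because the number of time steps along each tube is $\lceil C_2(1+|\k\gamma/(nd)^{1/2}+\x|)\rceil$. Your off-diagonal intuition is the right ingredient, but it enters at the localization stage, not as a post-processing of a density you have not yet constructed.
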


\begin{REM}
There are three differences w.r.t. to  Theorem 9.1.9 in \cite{stro:vara:79}. First, the norm of the initial point in the r.h.s. of the above controls is due to the transport by unbounded coefficients. Second, the small time constraint follows from our linearization strategy employed to derive the Calder\'on-Zygmund estimates of Theorem \ref{EST_CZ}. At last, the upper bound on $q$ comes from the control on the remainders in \eqref{CTR_DR}. When the system is linear w.r.t. the components that transmit the noise (see Remark \ref{REM_LIN}) this constraint disappears and the result of Theorem \ref{THM_LOC} hold for $q\in [1,+\infty)$.
\end{REM}

We provide below the principal lines needed to adapt the proof of Theorem 9.1.9 in \cite{stro:vara:79}, stressing which specific modifications  
are needed in the degenerate case and mainly concern the \textit{localization} arguments. Once again the key idea is to localize along the characteristic lines associated to the deterministic differential system instead of using spatial balls only as in \cite{stro:vara:79}. Observe anyhow that, when the drift is bounded, the product of the time interval and the spatial ball can be seen as a tube along a \textit{characteristic line}. Indeed,  if the drift is $0$ then the deterministic differential system does not leave its initial condition;  if it is bounded, the image of a spatial ball by the deterministic system
will stay uniformly in time in a ball whose radius only depend on the bound of the drift, the final time and the initial radius, \textbf{but not} on the points of the initial ball. 

\subsubsection{Controls for Slowly Varying Coefficients} 
We use here freely the notations of Section \ref{CZ_FORMAL} for the operators $\tilde G, R$ (see equations \eqref{green}-\eqref{DEF_R}).  
Also, in order to keep notations close to those in \cite{stro:vara:79}, we introduce for $r>2$, the class ${\mathcal A}(r,T) $ of measurable coefficients $a:\R^+\times \R^{nd}$ satisfying \A{UE} and $\gF $ satisfying \A{ND}, \A{S}, for which  there exists $\x_0\in\R^{nd} $ s.t. with the notations of \eqref{LOC_COND}, 
for all $\rho\in [r,\frac{n^2d+4}{2}\vee r] $, 
$ (\frac {\varepsilon_a}  2 C_{\ref{EQ_EST_CZ}}(T,\rho,\A{A})+C(\rho,\A{A}) T^{\frac \eta 2})<3/4$ so that from \eqref{CTR_R_OPINV}, we have that $(I-R)^{-1} $ is consistent as   bounded operator from $L^\rho([0,T]\times \R^{nd}) $ into itself. In particular this imposes that $T\le 1$ is sufficiently small.
Setting then $K :=\tilde G\circ(I- R)^{-1} $, we thus derive that it is consistent as bounded operator from $L^\rho([0,T]\times \R^{nd}) $ into $L^\sigma([0,T]\times \R^{nd}) $ provided that
$$0\le \frac 1\rho-\frac 1 \sigma<\frac{2}{n^2d+2}.$$
Here comes the first Lemma emphasizing some regularizing effects of $K$ which can be derived similarly to Lemma 9.1.2 in \cite{stro:vara:79}.
\begin{LEMME}
\label{SOOTH_INCREASE_MAP}
Let $a,\gF \in {\mathcal A}(r,T)$. Then, for $N=\lceil \frac{n^2d+2}{2}\frac{1}r\rceil$, $K^{N+1}$ maps $L^r([0,T]\times \R^{nd})$ into  $C_b([0,T]\times \R^{nd}) $ (space of real valued bounded continuous functions), i.e. it is $ L^r$-strong Feller. Precisely, for all $(s,\x)\in [0,T]\times \R^{nd} $:
$$|K^{N+1}f(s,\x)|\le C_{\ref{SOOTH_INCREASE_MAP}}\|f\|_{L^r([0,T]\times \R^{nd})}, $$
where $C_{\ref{SOOTH_INCREASE_MAP}}:=C_{\ref{SOOTH_INCREASE_MAP}}(T,r,\A{A})$.
\end{LEMME}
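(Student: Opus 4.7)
The idea is to iterate the Sobolev-type mapping property of $K$ recalled in the paragraph preceding the lemma: $K$ sends $L^\rho([0,T]\times \R^{nd})$ into $L^\sigma([0,T]\times \R^{nd})$ continuously whenever $0\le 1/\rho-1/\sigma<2/(n^2d+2)$. The final step bootstraps to $C_b$ via the pointwise control \eqref{POINT_W_CTR}.

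First, I would construct an increasing sequence $r=p_0<p_1<\cdots<p_N$ inside $[r,(n^2d+4)/2\vee r]$ with each gap satisfying $0<1/p_k-1/p_{k+1}<2/(n^2d+2)$ and the final exponent satisfying $p_N>(n^2d+2)/2$. The choice $N=\lceil (n^2d+2)/(2r)\rceil$ is tailored precisely for this: uniform steps of size slightly below $2/(n^2d+2)$ are admissible, and after $N$ such steps $1/p$ is reduced from $1/r$ to a value below $2/(n^2d+2)$. Iterating the mapping property then yields, for all $k\in\{0,\dots,N\}$,
\begin{equation*}
\|K^k f\|_{L^{p_k}([0,T]\times\R^{nd})}\le C\,\|f\|_{L^r([0,T]\times\R^{nd})},
\end{equation*}
with $C$ depending only on $T,r,\A{A}$.

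For the $(N+1)$-th application, decompose $K^{N+1}f=\tilde G\circ(I-R)^{-1}(K^Nf)$ and set $h:=(I-R)^{-1}K^Nf$. Since $p_N\in[r,(n^2d+4)/2\vee r]$, the definition of the class $\mathcal A(r,T)$ guarantees that $(I-R)^{-1}$ is bounded on $L^{p_N}$ (cf.\ \eqref{CTR_R_OPINV}), so $\|h\|_{L^{p_N}}\le C\|f\|_{L^r}$. The condition $p_N>(n^2d+2)/2$ is exactly the one required by the pointwise estimate \eqref{POINT_W_CTR}, which yields
\begin{equation*}
|K^{N+1}f(s,\x)|=|\tilde Gh(s,\x)|\le C_{\ref{POINT_W_CTR}}\,T^{1-(n^2d+2)/(2p_N)}\|h\|_{L^{p_N}}\le C\|f\|_{L^r},
\end{equation*}
uniformly in $(s,\x)\in[0,T]\times\R^{nd}$, which is the quantitative bound stated in the lemma.

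To promote the $L^\infty$-bound to continuity, note that $\tilde q(s,t,\x,\y)$ depends continuously on $(s,\x)$ for $s<t$ through the deterministic flow $\btheta$, the resolvent $\tilde\gR^{t,\y}$ and the covariance $\tilde\K^\y(s,t)$. For any sequence $(s_n,\x_n)\to(s_0,\x_0)$, pointwise convergence of the integrand in $\tilde Gh$ holds for a.e.\ $(t,\y)$, while the Gaussian upper bound \eqref{equiv_dens} together with Hölder's inequality against $h\in L^{p_N}$ (legitimate since $p_N>(n^2d+2)/2$) provides a uniform dominating function; dominated convergence then gives $K^{N+1}f\in C_b([0,T]\times\R^{nd})$. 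The main technical point is to ensure that the chain $(p_k)$ stays inside $[r,(n^2d+4)/2\vee r]$, on which $(I-R)^{-1}$ is bounded; this is precisely what the definition of $\mathcal A(r,T)$ guarantees, the optimal choice of $N$ ensuring in addition that $p_N$ lands in the range where \eqref{POINT_W_CTR} applies.
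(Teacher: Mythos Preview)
Your proof is correct and follows essentially the same approach as the paper, which simply refers to Lemma 9.1.2 in \cite{stro:vara:79}: iterate the $L^\rho\to L^\sigma$ smoothing of $K=\tilde G\circ(I-R)^{-1}$ along a chain of exponents staying in the range where $(I-R)^{-1}$ is bounded, and finish with the pointwise estimate \eqref{POINT_W_CTR} once the exponent exceeds $(n^2d+2)/2$. Your care in keeping the $p_k$ within $[r,(n^2d+4)/2\vee r]$ and your dominated-convergence argument for continuity are exactly the points one needs to check in adapting the non-degenerate proof.
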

From Lemma \ref{SOOTH_INCREASE_MAP}, Lemma 9.1.3 in \cite{stro:vara:79} and the well posedness of the martingale problem, denoting by $P(s,t,\x,.)$ the associated transition function, one then gets:
$Pf(s,\x):=\int_s^Tdt \int_{\R^{nd}}P(s,t,\x,d\y)f(t,\y)=Kf(s,\x) $ if $f\in L^\rho([0,T]\times \R^{nd})\cap L^\infty([0,T]\times \R^{nd})$. Therefore, for $N=\lceil (n^2d+2)/2r\rceil,\ f\in C_0([0,T)\times \R^{nd}) $ (functions with compact support), 
\begin{equation}
\label{EQ_NP1}
|P^{N+1}f(s,\x)|\le C_{\ref{SOOTH_INCREASE_MAP}}\|f\|_{L^r([0,T]\times \R^{nd})}.
\end{equation}
This observation then yields the following result.
\begin{LEMME}
\label{REG_FIRST_LOC_COND}
If $a,\gF \in {\mathcal A}(r,T)$ denoting by $P $ the transition function associated with $(L_t)_{t\in [0,T]}$ then for $r\le \rho\le +\infty $,
$$\int_s^T dt(t-s)^N\int_{\R^{nd}}P(s,t,\x,d\y) f(t,\y)\le C_{\ref{REG_FIRST_LOC_COND}}^1 \|f\|_{L^\rho([0,T]\times \R^{nd})},$$
with $C_{\ref{REG_FIRST_LOC_COND}}^1:=C_{\ref{REG_FIRST_LOC_COND}}^1(T,r,\A{A}) $.
Also, for each $\delta>0$, $r<\rho\le \infty $,
\begin{eqnarray*}
\int_s^T dt \int_{\R^{nd}\backslash B^E(\btheta_{t,s}(\x),\delta)}P(s,t,\x,d\y) f(t,\y)\le C_{\ref{REG_FIRST_LOC_COND}}^2 \|f\|_{L^\rho([0,T]\times \R^{nd})},
\end{eqnarray*}
where $C_{\ref{REG_FIRST_LOC_COND}}^2:=C_{\ref{REG_FIRST_LOC_COND}}^2(T,r,\rho,\A{A},\delta) $.
\end{LEMME}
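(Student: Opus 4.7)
The proof parallels Lemma 9.1.4 of \cite{stro:vara:79}, the essential modification being that spatial Euclidean balls centered at $\x$ (which suffice in the bounded-drift non-degenerate case) are replaced here by tubes around the deterministic characteristic $t\mapsto \btheta_{t,s}(\x)$, consistently with the characteristic-line viewpoint adopted throughout the paper. The two ingredients already at our disposal are the $L^r\to L^\infty$ smoothing bound \eqref{EQ_NP1} and the identification $P=K$ of Proposition \ref{PROP_FIN_EX_LOC}.

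\textbf{First inequality.} Since by Proposition \ref{PROP_FIN_EX_LOC} the transition operator $Pf(s,\x):=\int_s^T dt\int P(s,t,\x,d\y)f(t,\y)$ agrees with $K=\tilde G\circ(I-R)^{-1}$ on test functions, and since the transition kernel satisfies Chapman--Kolmogorov, one iterates $P$ on itself via a standard Fubini/Chapman--Kolmogorov argument to obtain
$$P^{N+1}f(s,\x)=\frac{1}{N!}\int_s^T (u-s)^N \int_{\R^{nd}} P(s,u,\x,d\z) f(u,\z)\,du,$$
with $N=\lceil (n^2d+2)/(2r)\rceil$ as in Lemma \ref{SOOTH_INCREASE_MAP}. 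Combined with \eqref{EQ_NP1} this yields the first bound for $\rho=r$. The case $\rho=\infty$ is trivial as $\int_s^T (u-s)^N du\le T^{N+1}/(N+1)$, and Riesz--Thorin interpolation covers the whole range $\rho\in[r,+\infty]$.

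\textbf{Second inequality.} Introduce the flow-adapted exit time
$$\tau:=\inf\bigl\{t\in[s,T]:\; \X_t\notin B^E(\btheta_{t,s}(\x),\delta)\bigr\}\wedge T$$
under $\P_{s,\x}$. For non-negative $f$, the inclusion $\{\X_t\notin B^E(\btheta_{t,s}(\x),\delta)\}\subset\{\tau\le t\}$ gives
$$\int_s^T dt\int_{\R^{nd}\setminus B^E(\btheta_{t,s}(\x),\delta)} P(s,t,\x,d\y)f(t,\y)\le \E^{\P_{s,\x}}\Bigl[\int_\tau^T f(t,\X_t)\,dt\Bigr].$$
The strong Markov property, together with $P=K$, shows the right-hand side equals $\E^{\P_{s,\x}}[Kf(\tau,\X_\tau)]$. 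For $\rho>(n^2d+2)/2$, Lemma \ref{SOOTH_INCREASE_MAP} (more precisely the gain $1/\rho-1/\sigma<2/(n^2d+2)$ in integrability at each application of $K$) gives $K:L^\rho\to L^\infty$, so the expectation is bounded by $\|Kf\|_\infty\le C\|f\|_{L^\rho}$. For $r<\rho\le (n^2d+2)/2$, iterate the same stopping-time/strong-Markov scheme finitely many times, each iteration bringing in one additional power of $K$, until the integrability index exceeds $(n^2d+2)/2$ and $L^\infty$ control is reached.

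\textbf{Main obstacle.} The principal delicacy lies in justifying the strong Markov step and, in particular, the identification $\E^{\P_{s,\x}}[\int_\tau^T f(t,\X_t)dt\,|\,\F_\tau]=Kf(\tau,\X_\tau)$ when the initial datum $(\tau,\X_\tau)$ is random. This rests on the existence of regular conditional probabilities associated to the well-posed martingale problem of Proposition \ref{PROP_FIN_EX_LOC}, coupled with measurability (indeed continuity, after sufficiently many smoothings by $K$, cf. Lemma \ref{SOOTH_INCREASE_MAP}) of $Kf$ in its starting arguments. A secondary point is that the Euclidean ball $B^E$ appearing in the statement is a rather coarse measure of proximity to $\btheta_{t,s}(\x)$ from the viewpoint of the parabolic quasi-metric \eqref{metric}; the reason the Euclidean formulation suffices is precisely the small-time regime $T\le T_0(\A{A})$, in which the flow $\btheta_{t,s}(\x)$ dominates the diffusive part in every degenerate component, so that a Euclidean tube of radius $\delta$ contains an intrinsic tube of comparable size.
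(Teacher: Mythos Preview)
Your treatment of the first inequality is correct and matches the paper's: both rely on the Chapman--Kolmogorov identity $P^{N+1}f(s,\x)=\tfrac1{N!}\int_s^T(u-s)^N\int P(s,u,\x,d\z)f(u,\z)\,du$ together with \eqref{EQ_NP1} and interpolation.

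For the second inequality you take a genuinely different route from the paper, and there is a gap. The paper does not stop the process at all: it slices the time interval dyadically as $[s+c/(k+1),s+c/k]$, $k\ge 1$, bounds the operator $\Lambda_{s,\x}^k$ on $L^r$ by $N!((k+1)/c)^N$ via the first part, and on $L^\infty$ by $(c/k)\,\P_{s,\x}[\sup_{t\in[s,s+c/k]}|\X_t-\btheta_{t,s}(\x)|\ge\delta]\le C\exp(-C^{-1}k\delta^2/c)$, the latter coming from Gronwall plus a Bernstein inequality on the martingale part. Interpolation then gives summable constants for every $\rho\in(r,\infty]$. Your stopping-time argument is cleaner when it applies, but it only applies directly for $\rho>(n^2d+2)/2$, where $K:L^\rho\to L^\infty$.

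The gap is your iteration for $r<\rho\le(n^2d+2)/2$. After one application of strong Markov you have $\E^{\P_{s,\x}}[Kf(\tau,\X_\tau)]$, and you claim that ``iterating the same stopping-time/strong-Markov scheme'' brings in additional powers of $K$. It does not. The quantity $Kf(\tau,\X_\tau)=\E^{\P_{\tau,\X_\tau}}[\int_\tau^T f(t,\X_t)\,dt]$ is already a full time integral from $\tau$ to $T$; there is no residual ``outside-the-tube'' restriction left that a second stopping time could exploit, and no mechanism by which restarting the process would produce $K^2f$ rather than $Kf$. All you know is $Kf\in L^\sigma$ for some $\sigma>\rho$, but you need a pointwise control of $Kf(\tau,\X_\tau)$, and there is no general bound on $\E[g(\tau,\X_\tau)]$ in terms of $\|g\|_{L^\sigma}$ for finite $\sigma$. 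To close the argument in this range you would need either the paper's Bernstein/interpolation device or some other idea; the stopping-time scheme as written does not self-iterate.
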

\begin{REM}
This is the first statement that differs from \cite{stro:vara:79}. Indeed, the unbounded transport contribution appears here for the first time.
To fully justify this aspect we give below the full proof of this result. 
\end{REM}
\begin{proof}
The first statement of the Lemma still follows from Lemma 9.1.3 in \cite{stro:vara:79} and \eqref{EQ_NP1} from an interpolation argument. For the second one, we can assume w.l.o.g. that $T\ge s+c,\ c>0 $. In that case:
\begin{eqnarray*}
|\int_s^T dt \int_{\R^{nd}\backslash B^E(\btheta_{t,s}(\x),\delta)}P(s,t,\x,d\y) f(t,\y)|\\
\le \sum_{k\ge 1} \int_{s+c/(k+1)}^{s+c/k} dt\int_{\R^{nd}\backslash B^E(\btheta_{t,s}(\x),\delta)}P(s,t,\x,d\y) |f(t,\y)|\\
+c^{-N}\int_{s+c}^T(t-s)^N\int_{\R^{nd}}P(s,t,\x,d\y) |f(t,\y)|.
\end{eqnarray*}
The last contribution can be bounded directly by the first statement of the Lemma. To control the sum, we see that introducing 
$$\Lambda_{s,\x}^k\varphi:=\int_{s+c/(k+1)}^{s+c/k}dt \int_{\R^{nd}\backslash B^E(\btheta_{t,s}(\x),\delta)}P(s,t,\x,d\y) \varphi(t,\y), $$
we indeed get, from Lemma 9.1.3 in \cite{stro:vara:79} and the first part of the lemma, that as a linear operator on $L^r([0,T]\times \R^{nd}),\ \Lambda_{s,\x}^k $ is bounded by $N!((k+1)/c)^N $. Now for $\varphi\in L^\infty([0,T]\times \R^d) $,
\begin{eqnarray*}
|\Lambda_{s,\x}^k \varphi|\le \frac ck |\varphi|_\infty \P_{s,\x}[\sup_{t\in [s,s+c/k]} |\X_t-\btheta_{t,s}(\x)|\ge \delta].
\end{eqnarray*}
Let us emphasize that it is precisely because we consider the deviations of the process from the deterministic differential system, that we can control the previous term with Bernstein like inequalities. Precisely, from Gronwall's lemma:
\begin{eqnarray*}
|\X_t-\btheta_{t,s}(\x)|\le \exp(C T)|\int_s^t \sigma(u,\X_u) dW_u|\le \exp(C)|\int_s^t \sigma(u,\X_u) dW_u|,
\end{eqnarray*}
with $C:=C(\A{A})$, so that, from Bernstein's inequality:
\begin{eqnarray*}
\P_{s,\x}[\sup_{t\in [s,s+c/k]} |\X_t-\btheta_{t,s}(\x)|\ge \delta]\le C\exp(-C^{-1}k\delta^2/c),\ C:=C(\A{A}),
\end{eqnarray*}
up to a modification of $C$.
The result then once again follows from standard interpolation.
\end{proof}

\subsubsection{Localization arguments}
Now we adapt more significantly the arguments in \cite{stro:vara:79} to get our results. The leading idea is the same as in the proof of Lemma \ref{REG_FIRST_LOC_COND}: to exploit the Bernstein-like deviations of the process from the deterministic system. We now want to localize carefully to get rid off the \textit{quasi-constant} diffusion coefficient of the previous section.
We have the following \textit{tubular} localization.
\begin{LEMME}[Tubular estimate]
\label{LEMME_TUB}
For $s_0\in [0,T),\ \x_1\in \R^{nd}$ let $\P_{s_0,\x_1} $ denote the solution to the martingale problem associated with $(L_{t})_{t\in [s_0,T]}$.
For $0<R_1<R_2 $, $\x_0\in \R^{nd} $ defining $\tau_{-1}=s_0$ and for all $k\in \N $,
\begin{eqnarray*}
\tau_{2k}:=\inf\{t\ge \tau_{2k-1}: |\X_t-\btheta_{t,s_0}(\x_0)|= R_2 \},\\
\tau_{2k+1}:=\inf\{t\ge \tau_{2k}: |\X_t-\btheta_{t,s_0}(\x_0)|= R_1 \},   
\end{eqnarray*}
then
$$\E^{\P_{s_0,\x_1}}[\sum_{k\ge 0}\I_{\tau_{2k}\in [0,T]}] \le C_{\ref{LEMME_TUB}}:=C_{\ref{LEMME_TUB}}(T,\A{A},R_2-R_1).$$
\end{LEMME}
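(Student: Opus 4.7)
The strategy is to show that, for $k \geq 1$, each excursion $[\tau_{2k-1},\tau_{2k}]$ takes at least a fixed duration $\delta>0$ with probability bounded away from $0$, uniformly in $k$ and in the initial datum. A counting argument then yields a finite expectation for the number of excursions in $[s_0,T]$. The key technical ingredient is a Bernstein-type deviation estimate for the martingale part of $\X$, in the spirit of the computation appearing in the proof of Lemma \ref{REG_FIRST_LOC_COND}. Note that the contribution of $k=0$ to the sum is trivially bounded by $1$, so we can restrict attention to $k \geq 1$, in which case $|\X_{\tau_{2k-1}} - \btheta_{\tau_{2k-1},s_0}(\x_0)| = R_1$.

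Setting $M_t := \int_{s_0}^{t} B\sigma(u,\X_u)\,dW_u$ and using the Lipschitz property of $\gF$ from \A{S}, Gronwall's lemma applied to $|\X_t - \btheta_{t,s_0}(\x_0)|$ on $[\tau_{2k-1},\tau_{2k}]$ yields, for $k \geq 1$,
$$|\X_t - \btheta_{t,s_0}(\x_0)| \leq \Bigl(R_1 + \sup_{\tau_{2k-1} \leq u \leq t} |M_u - M_{\tau_{2k-1}}|\Bigr) e^{C(t-\tau_{2k-1})},\quad C:=C(\A{A}).$$
At $t = \tau_{2k}$ the left-hand side equals $R_2$, so on $\{\tau_{2k} - \tau_{2k-1} \leq \delta\}$,
$$\sup_{\tau_{2k-1} \leq u \leq \tau_{2k-1}+\delta} |M_u - M_{\tau_{2k-1}}| \geq R_2 e^{-C\delta} - R_1 =: r_\delta.$$
Pick $\delta > 0$ small enough that $r_\delta > 0$. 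Under \A{UE}, the quadratic variation of $M$ satisfies $\langle M\rangle_{t} - \langle M\rangle_{s} \leq \Lambda(t-s)$, so the strong Markov property applied at $\tau_{2k-1}$ together with Bernstein's inequality for continuous martingales yield
$$\P_{s_0,\x_1}\bigl[\tau_{2k} - \tau_{2k-1} \leq \delta \,\big|\, \mathcal{F}_{\tau_{2k-1}}\bigr] \leq 2\exp\bigl(-r_\delta^2/(2\Lambda\delta)\bigr) =: p,$$
uniformly in $k \geq 1$ and in $\x_1$, on $\{\tau_{2k-1} \leq T\}$. By shrinking $\delta$ further, $p$ can be made as small as desired; in particular, $p < 1$.

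Let now $N := \sum_{k \geq 0} \I_{\tau_{2k} \leq T}$ and $Y_j := \I_{\tau_{2j}-\tau_{2j-1}\leq \delta}$. The identity $\tau_{2k} - s_0 = \sum_{j=0}^{k}(\tau_{2j}-\tau_{2j-1})$ gives $\{N \geq k\} = \{\tau_{2k}\leq T\} \subseteq \{\sum_{j=0}^{k}(\tau_{2j}-\tau_{2j-1}) \leq T - s_0\}$, which forces at least $k+1 - \lfloor (T-s_0)/\delta\rfloor$ of the $k+1$ durations to be $\leq \delta$. Dropping the $j=0$ term (for which no uniform bound is needed),
$$\{N \geq k\} \subseteq \Bigl\{\sum_{j=1}^{k} Y_j \geq k - \lfloor (T-s_0)/\delta\rfloor\Bigr\}.$$
The conditional bound above permits to couple $(Y_j)_{j\geq 1}$ with i.i.d.\ Bernoulli$(p)$ random variables that dominate it pointwise, so a standard Chernoff estimate provides $\P_{s_0,\x_1}[N \geq k] \leq C e^{-c k}$ for $k$ sufficiently large. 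Summing over $k$ gives $\E^{\P_{s_0,\x_1}}[N] \leq C_{\ref{LEMME_TUB}}$, with a constant depending only on $T$, $R_2 - R_1$, and the structural constants in \A{A}.

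The main obstacle is securing the uniformity of the Bernstein bound in $k$ and in $\x_1$; this is handled by a strong-Markov shift applied to the continuous martingale $M$, whose quadratic variation grows at most linearly by \A{UE}. Once this uniform deviation bound is in hand, the stochastic-domination counting step is routine.
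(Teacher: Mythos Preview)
Your argument is correct and follows essentially the same route the paper indicates: adapt Lemma~9.1.6 of Stroock--Varadhan by applying Bernstein-type deviation bounds to the martingale part of $\X_t-\btheta_{t,s_0}(\x_0)$, exactly as done in the proof of Lemma~\ref{REG_FIRST_LOC_COND}. One small slip: the relation $\tau_{2k}-s_0=\sum_{j=0}^{k}(\tau_{2j}-\tau_{2j-1})$ is not an identity (it omits the downward durations $\tau_{2j+1}-\tau_{2j}$), but since these are nonnegative you only need the inequality $\tau_{2k}-s_0\ge\sum_{j=0}^{k}(\tau_{2j}-\tau_{2j-1})$, which gives the same inclusion and leaves the rest of the counting argument intact.
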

The proof can be performed as in Lemma 9.1.6 in \cite{stro:vara:79}. The previous definitions of the stopping times allows to apply the required Bernstein like arguments similarly to the proof of Lemma \ref{REG_FIRST_LOC_COND}. 

The following result differs once again in the localization argument from Lemma 9.1.7 in \cite{stro:vara:79}, even though it can be proved rather similarly from Lemma \ref{LEMME_TUB}. We emphasize here that the localization has to be performed in time and space. Roughly speaking this is needed in order to \textit{partition} in time the characteristic tubes in subtubes for which the local condition \eqref{LOC_COND} is valid. This is the key of the proof.

\begin{LEMME}[First Localization Lemma]
\label{FIRST_LOC_LEMME}
Let $\P_{s_0,\x_1}$ solve the martingale problem for $(L_t)_{t\in [0,T]}$ starting from $(s_0,\x_1)\in [0,T]\times \R^{nd} $. Suppose now that the martingale problem associated with the operator 
$$\tilde L_t= \gF(t,\cdot)\cdot {\mathbf D}_\x +\frac12 \Tr(\tilde a(t,\cdot) D_{\x_1}^2) $$
is well posed and that $ \tilde a=a $ on 
\begin{equation}
\label{DEF_CROWN}
{\mathcal C}_{\underline t,\overline t ,R}(s_0,\x_0) :=\{(t,\y)\in [0,T]\times \R^{nd}: t\in [\underline t, \overline t], \btheta_{s_0,t}(\y) \in B^E(\x_0,R)  \},
\end{equation}
for some $s_0\le \underline t<\overline t\le T,\ R>0 $. 
Let us denote by $\tilde \P_{s_0,\x_1} $ the solution to the martingale problem for $(\tilde L_t)_{t\in [0,T]} $ starting from $(s_0,\x_1)\in [0,T]\times \R^{nd}  $.
Then for each $\delta \in (0,R) $:
\begin{eqnarray*}
|\E^{\P_{s_0,\x_1}}[\int_{s_0}^T f(t,\X_t)dt]|\le \E^{\tilde \P_{s_0,\x_1}}[\int_{s_0}^T |f(t,\X_t)|dt]\\
+C_{\ref{FIRST_LOC_LEMME}}\sup_{(s,\x)\in \partial {\mathcal C}_{\underline t,\overline t ,R-\delta}(s_0,\x_0)}\E^{\tilde \P_{s,\x}}[\int_{s}^T |f(t,\X_t)|dt],
\end{eqnarray*}
for all $f\in C_0({\mathcal C}_{\underline t,\overline t ,R-\delta}(s_0,\x_0)) $ and $C_{\ref{FIRST_LOC_LEMME}}:=C_{\ref{FIRST_LOC_LEMME}}(T,\A{A},R,\delta) $.
\end{LEMME}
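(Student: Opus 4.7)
The strategy follows Lemma 9.1.7 in \cite{stro:vara:79}, with two substitutions tailored to the degenerate setting: Euclidean balls in $\R^{nd}$ are replaced by the characteristic tubes ${\mathcal C}_{\underline t,\overline t,R}(s_0,\x_0)$ defined in \eqref{DEF_CROWN}, and the count of crossings through a spatial annulus is provided by Lemma \ref{LEMME_TUB} in place of the classical ball-exit estimate. The mechanism is that since $\tilde a=a$ on ${\mathcal C}_{\underline t,\overline t,R}$, the generators $L_t$ and $\tilde L_t$ agree on the outer crown; by the localization theorem for well-posed martingale problems (Theorem 6.1.2 in \cite{stro:vara:79}), the laws $\P_{s_0,\x_1}$ and $\tilde\P_{s_0,\x_1}$ coincide on events measurable up to the first exit from the tube.

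First I would introduce an alternating sequence of stopping times capturing the excursions of $\X$ through $\supp(f)\subset {\mathcal C}_{\underline t,\overline t,R-\delta}(s_0,\x_0)$ and out of the outer crown. Setting $\sigma_{-1}=s_0$, for $k\ge 0$ define
$$\tau_k = \inf\{t\ge \sigma_{k-1} : (t,\X_t)\in \supp(f)\}\wedge T,\qquad \sigma_k=\inf\{t\ge \tau_k : (t,\X_t)\notin {\mathcal C}_{\underline t,\overline t,R}(s_0,\x_0)\}\wedge T.$$
Since $\supp(f)\subset {\mathcal C}_{\underline t,\overline t,R-\delta}\subset {\mathcal C}_{\underline t,\overline t,R}$, one has $\int_{s_0}^T|f(t,\X_t)|dt=\sum_{k\ge 0}\int_{\tau_k}^{\sigma_k}|f(t,\X_t)|dt$, and for each $k\ge 1$ the point $(\tau_k,\X_{\tau_k})$ must lie on $\partial {\mathcal C}_{\underline t,\overline t,R-\delta}(s_0,\x_0)$, because re-entering $\supp(f)$ from outside ${\mathcal C}_{\underline t,\overline t,R}$ requires crossing the inner boundary.

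Next I would exploit the equality of generators inside the tube together with the strong Markov property. On each interval $[\tau_k,\sigma_k]$ the process stays in ${\mathcal C}_{\underline t,\overline t,R}$ where $L_t=\tilde L_t$, so by the martingale-problem localization argument the conditional law of $(\X_{t\wedge \sigma_k})_{t\ge \tau_k}$ given $\F_{\tau_k}$ agrees under $\P_{s_0,\x_1}$ and under $\tilde\P_{\tau_k,\X_{\tau_k}}$. For $k=0$ this immediately gives
$$\E^{\P_{s_0,\x_1}}\!\left[\int_{\tau_0}^{\sigma_0}|f(t,\X_t)|\,dt\right]\le \E^{\tilde\P_{s_0,\x_1}}\!\left[\int_{s_0}^T|f(t,\X_t)|\,dt\right],$$
while for $k\ge 1$, conditioning on $\F_{\tau_k}$ and using $(\tau_k,\X_{\tau_k})\in \partial{\mathcal C}_{\underline t,\overline t,R-\delta}$,
$$\E^{\P_{s_0,\x_1}}\!\left[\I_{\tau_k<T}\int_{\tau_k}^{\sigma_k}|f(t,\X_t)|\,dt\right]\le \E^{\P_{s_0,\x_1}}\!\left[\I_{\tau_k<T}\right]\sup_{(s,\x)\in \partial {\mathcal C}_{\underline t,\overline t,R-\delta}}\E^{\tilde\P_{s,\x}}\!\left[\int_s^T|f(t,\X_t)|\,dt\right].$$
The remaining task is to bound $\E^{\P_{s_0,\x_1}}[\sum_{k\ge 1}\I_{\tau_k<T}]$, i.e.\ to count tube re-entries. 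The crossings $(\tau_k,\sigma_k)$ are phrased in terms of $|\btheta_{s_0,t}(\X_t)-\x_0|$; under \A{S} the flow $\y\mapsto \btheta_{s_0,t}(\y)$ is bi-Lipschitz in space uniformly on $[0,T]$, so these crossings are equivalent, up to a deterministic Lipschitz factor, to the oscillations of $|\X_t-\btheta_{t,s_0}(\x_0)|$ between two radii of order $\delta$. Lemma \ref{LEMME_TUB} then delivers a bound of the expected count by $C_{\ref{LEMME_TUB}}(T,\A{A},\delta)$, which absorbs into $C_{\ref{FIRST_LOC_LEMME}}$.

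The main obstacle is the switching step: one must justify, in the present degenerate setting with unbounded drift, that inside the closed set ${\mathcal C}_{\underline t,\overline t,R}$ the $\P$-law and the $\tilde\P$-law of the stopped process agree. The classical localization theorem applies on open sets on which the generators coincide; here I would slightly enlarge the tube to ${\mathcal C}_{\underline t,\overline t,R+\nu}(s_0,\x_0)$ for small $\nu>0$, extend $\tilde a$ continuously on this neighborhood while preserving \A{UE} and the hypotheses of Section \ref{APP_MT} (so that $\tilde L$ remains well-posed), apply the localization argument on the open neighborhood, and let $\nu\downarrow 0$ using the pathwise continuity of $\X$ together with dominated convergence. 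Once the measure-switching is justified, the remaining bookkeeping is very close to the non-degenerate reference argument of \cite{stro:vara:79}.
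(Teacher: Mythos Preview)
Your proposal is correct and follows exactly the route the paper indicates: it defers to Lemma 9.1.7 in \cite{stro:vara:79}, replacing Euclidean balls by the characteristic tubes ${\mathcal C}_{\underline t,\overline t,R}(s_0,\x_0)$ and invoking Lemma \ref{LEMME_TUB} to bound the expected number of crossings. One small slip: with your definition of $\tau_k$ as the hitting time of $\supp(f)$, the point $(\tau_k,\X_{\tau_k})$ lies on $\partial\,\supp(f)$ rather than on $\partial {\mathcal C}_{\underline t,\overline t,R-\delta}(s_0,\x_0)$; define instead $\tau_k$ as the entry time into ${\mathcal C}_{\underline t,\overline t,R-\delta}(s_0,\x_0)$ (which still captures all of $\int|f|$ since $\supp(f)\subset{\mathcal C}_{\underline t,\overline t,R-\delta}$) and the boundary claim, hence the supremum bound, holds as stated.
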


We now specify how this Lemma needs to be used. As a direct corollary of Lemmas \ref{FIRST_LOC_LEMME} and \ref{REG_FIRST_LOC_COND} we derive:
\begin{LEMME}[Second Localization Lemma]
\label{LOC_LEM2}
Let $\tilde a(s,\x):=a(s,\x), 
$ in ${\mathcal C}_{\underline t,\overline t ,R}(s_0,\x_0) $ and $\tilde a(s,\x):=a(s_0,\x_0), 
$ elsewhere. Assume that the functions $\tilde a, \gF\in {\mathcal A}(r,T) $ for some $r\in (2,+\infty)$. Let $\P_{s_0,\x_1}$ solve the martingale problem for $L$ starting at $s_0,\x_1\in [0,T]\times \R^{nd}$. Then for each $0<\alpha<R $ and $r<\rho\le +\infty $. 
$$|\E^{\P_{s_0,\x_1}}[\int_{s_0}^T(t-s_0)^Nf(t,\X_t)dt]|\le C_{\ref{LOC_LEM2}}^1\|f\|_{L^\rho([s_0,T]\times \R^{nd})},  $$
for all $f\in C_0({\mathcal C}_{\underline t,\overline t ,\alpha}(s_0,\x_0)) $, where $N=\lceil (n^2 d+2)/2r\rceil $ and $C_{\ref{LOC_LEM2}}^1:=C_{\ref{LOC_LEM2}}^1(T,\A{A},r,\rho,R-\alpha)$. If additionally, $|\x_1-\x_0|>\alpha $, then 
$$|\E^{\P_{s_0,\x_1}}[\int_{s_0}^Tf(t,\X_t)dt]|\le C_{\ref{LOC_LEM2}}^2\|f\|_{L^\rho([s_0,T]\times \R^{nd})}, $$
where $C_{\ref{LOC_LEM2}}^2:=C_{\ref{LOC_LEM2}}^2(T,\A{A},r,\rho,R-\alpha,|\x_1-\x_0|-\alpha)$.

\end{LEMME}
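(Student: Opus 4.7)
The plan is to derive both bounds by combining Lemma \ref{FIRST_LOC_LEMME}, which transfers a $\P$-expectation into a $\tilde\P$-expectation plus a boundary supremum term, with Lemma \ref{REG_FIRST_LOC_COND}, which applies to $\tilde\P$ since by hypothesis $\tilde a,\gF\in\mathcal{A}(r,T)$. The only non-routine ingredient will be a Lipschitz separation estimate between the support of $f$ and the characteristic line $t\mapsto\btheta_{t,s}(\x)$, needed to invoke the second assertion of Lemma \ref{REG_FIRST_LOC_COND}.

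First, I fix an intermediate radius $\alpha'\in(\alpha,R)$, say $\alpha':=(\alpha+R)/2$. Since $f\in C_0(\mathcal{C}_{\underline t,\overline t,\alpha}(s_0,\x_0))\subset C_0(\mathcal{C}_{\underline t,\overline t,\alpha'}(s_0,\x_0))$, Lemma \ref{FIRST_LOC_LEMME} applied with $\delta=R-\alpha'$ to the weighted test function $g(t,\x):=(t-s_0)^N f(t,\x)$, still compactly supported in the same crown, yields
\begin{eqnarray*}
\Big|\E^{\P_{s_0,\x_1}}\Big[\int_{s_0}^T (t-s_0)^N f(t,\X_t)dt\Big]\Big|
&\le& \E^{\tilde\P_{s_0,\x_1}}\Big[\int_{s_0}^T (t-s_0)^N |f(t,\X_t)|dt\Big] \\
&& +\,C\sup_{(s,\x)\in\partial\mathcal{C}_{\underline t,\overline t,\alpha'}(s_0,\x_0)} \E^{\tilde\P_{s,\x}}\Big[\int_s^T (t-s_0)^N |f(t,\X_t)|dt\Big].
\end{eqnarray*}
The first term on the right is immediately bounded by $C\|f\|_{L^\rho}$ via the first assertion of Lemma \ref{REG_FIRST_LOC_COND}. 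For the supremum, the idea is to split $(t-s_0)^N\le 2^{N-1}\bigl((t-s)^N+T^N\bigr)$: the $(t-s)^N$-piece is again controlled by the first assertion of Lemma \ref{REG_FIRST_LOC_COND}, while the $T^N$-piece requires its second assertion and hence a uniform lower bound on $|\y-\btheta_{t,s}(\x)|$ over $(t,\y)\in\supp(f)$.

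This Lipschitz separation is the geometric core of the argument. For a spatial boundary point $(s,\x)\in\partial\mathcal{C}_{\underline t,\overline t,\alpha'}(s_0,\x_0)$ one has $|\btheta_{s_0,s}(\x)-\x_0|=\alpha'$, whereas every $(t,\y)\in\supp(f)$ satisfies $|\btheta_{s_0,t}(\y)-\x_0|\le\alpha$. Combining the semigroup identity $\btheta_{s_0,s}(\x)=\btheta_{s_0,t}(\btheta_{t,s}(\x))$ with the uniform Lipschitz continuity on $[0,T]\times\R^{nd}$ of $\btheta_{s_0,t}$ (with constant $L$ depending only on \A{A} and $T\le 1$) gives
$$\alpha'-\alpha \le \bigl|\btheta_{s_0,t}(\y)-\btheta_{s_0,t}(\btheta_{t,s}(\x))\bigr| \le L\,|\y-\btheta_{t,s}(\x)|,$$
so $|\y-\btheta_{t,s}(\x)|\ge L^{-1}(\alpha'-\alpha)>0$. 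Temporal boundary contributions ($s\in\{\underline t,\overline t\}$) are either trivial (the support of $f$ in time lies in $[\underline t,\overline t]$, so the integral from $s=\overline t$ onwards vanishes) or reduced to the spatial case by slightly enlarging $[\underline t,\overline t]$ at the cost of an absorbing constant. Inserting the separation $\delta_0:=L^{-1}(\alpha'-\alpha)$ into the second assertion of Lemma \ref{REG_FIRST_LOC_COND} and reassembling yields the first estimate with $C^1_{\ref{LOC_LEM2}}$ depending only on $T$, $r$, $\rho$, \A{A} and $R-\alpha$.

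For the second statement the $(t-s_0)^N$ weight is dropped and the hypothesis $|\x_1-\x_0|>\alpha$ plays exactly the role that $\alpha'-\alpha$ played above: applying the same Lipschitz argument with $\btheta_{s_0,s_0}(\x_1)=\x_1$ yields $|\y-\btheta_{t,s_0}(\x_1)|\ge L^{-1}(|\x_1-\x_0|-\alpha)>0$ on $\supp(f)$, so the first term on the right of Lemma \ref{FIRST_LOC_LEMME} (without weight) can be estimated \emph{directly} by the second assertion of Lemma \ref{REG_FIRST_LOC_COND}, and the boundary supremum is treated as before (picking $\alpha'\in(\alpha,R)$). The main difficulty I anticipate is the careful bookkeeping that extracts a \emph{uniform} separation $\delta_0>0$ depending only on $R-\alpha$ (respectively $|\x_1-\x_0|-\alpha$), together with a clean treatment of the temporal portion of $\partial\mathcal{C}$, so that the constants $C^1_{\ref{LOC_LEM2}}$ and $C^2_{\ref{LOC_LEM2}}$ depend only on the data listed in the statement.
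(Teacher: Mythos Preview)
Your proposal is correct and follows exactly the approach the paper indicates: the paper states this lemma as ``a direct corollary of Lemmas \ref{FIRST_LOC_LEMME} and \ref{REG_FIRST_LOC_COND}'' without writing out a proof, and you have filled in precisely the details one expects---applying Lemma \ref{FIRST_LOC_LEMME} with an intermediate radius $\alpha'\in(\alpha,R)$, then controlling the two resulting $\tilde\P$-expectations via the two assertions of Lemma \ref{REG_FIRST_LOC_COND}, with the bi-Lipschitz property of the flow $\btheta$ supplying the uniform separation $|\y-\btheta_{t,s}(\x)|\ge L^{-1}(\alpha'-\alpha)$ needed for the second assertion. Your treatment of the temporal boundary of $\partial\mathcal{C}$ is a bit terse but the idea is sound; the case $s=\overline t$ is indeed trivial, and the case $s=\underline t$ can be absorbed (e.g.\ by choosing $\alpha'$ so that only the spatial portion of the boundary is hit before exiting, or by noting that in the applications $\underline t=s_0$ so this face is not reached by the stopped process started at $s_0$).
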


\subsubsection{Proof of Theorem \ref{THM_LOC}}
From the previous localization Lemmas, the idea is now to specifically \textit{partition} the space in order to have crowns of the previous type,
${\mathcal C}_{\underline t,\overline t,R}(s_0,\x_0) $, introduced in \eqref{DEF_CROWN}, on which the local condition \eqref{LOC_COND} holds.
Let $q'$ denote the conjugate of $q\in (1,2)$ and choose $r\in (2,q')$  s.t. $N:=\lceil \frac{n^2 d+2}{2}\frac 1r \rceil=\lceil \frac{n^2 d+2}{2}\frac 1{q'} \rceil $.
Choose $T\le (4(\sup_{\rho\in [r,(\frac{n^2d+2}{2}\vee r)+1]}C(\rho,\A{A})))^{-2}$ in \eqref{CTR_R_OPINV} and set $\varepsilon^{-1}:= C_{\ref{EQ_EST_CZ}}(T,r,\A{A})\vee C_{\ref{EQ_EST_CZ}}(T,(\frac{n^2d+2}{2}\vee r)+1,\A{A})\vee \varepsilon_a^{-1}$ 
. 
Let us introduce for a fixed starting point $(s,\x) $ of the martingale problem, the spatial balls
$$Q_\k:=\{\x+\y: |\y_j-\k_j\gamma/(nd)^{1/2}|\le \gamma/(nd)^{1/2},\ j\in \leftB 1, nd\rightB  \},\ \k\in \Z^{nd},$$
where $\gamma:=\frac{\delta_T(\varepsilon)}{C_1}$, and recalling that $\delta_T $ stands for the modulus of continuity of $a$, the constant $C_1$ is then chosen large enough so that for all $\k\in \Z^{nd},\ \y_0,\y_1 \in Q_\k,\ t\in [s,T]$,
\begin{eqnarray} 
\label{loc_coeff}
|a(t,\btheta_{t,s}(\y_0))-a(t,\btheta_{t,s}(\y_1))| 
\le \varepsilon/2.
\end{eqnarray}
 This means that the local condition is satisfied on the time section of the transport of $Q_\k $ by the flow. In order to apply the previous results, we also need to handle the time contribution. Define now $h^\k:=\frac{T-s}{\lceil C_2(1+|\k\gamma/(nd)^{1/2}+\x|)\rceil} $ where the constant $C_2:=C_2(\varepsilon)$ is chosen large enough, so that
 for all $i\in \leftB 0, \lceil C_2(1+|\k\gamma/(nd)^{1/2}+\x|)\rceil\rightB$, setting $t_i^\k:=s+ih^\k $, the coefficient $a$ restricted to ${\mathcal C}_{t_i^\k,t_{i+1}^\k,\beta\gamma}(s,\x+\k \gamma/(nd)^{1/2}),\ \beta>1 $, coincides with some $\tilde a$ s.t. $\tilde a,\gF $ belong to the class ${\mathcal A}(r,T) $.

This choice simply means that the length of the time intervals for which we partition the set $\btheta(T,s,Q_\k):=\{(t,\z)\in [s,T] \times \R^{nd}: \btheta_{s,t}(\z)\in Q_\k
\} $ (image of $Q_\k$ by the flow between times $s$ and $T$) highly depends on the norm of the starting point. This is specifically due to the unbounded drift. Precisely we can write:
\begin{equation}
\label{CHOP_FLOW}
\btheta({T,s},Q_\k):=\bigcup_{i=0}^{\lceil C_2(1+|\k\gamma/(nd)^{1/2}+\x|)\rceil-1}{\mathcal C}_{t_i^\k,t_{i+1}^\k,\gamma}(s,\x+\k \gamma/(nd)^{1/2}).
\end{equation}
Now from Lemma \ref{LOC_LEM2} we get that for $\rho=(r+q')/2 $, for all $i\in \leftB 0,\lceil C_2(1+|\k\gamma/(nd)^{1/2}+\x|)\rceil-1\rightB$
\begin{equation}
\label{NEW_DEC_CUBE}
\begin{split}
&|\int_s^T dt (t-s)^N\int \I_{(t,\y)\in {\mathcal C}_{t_i^\k,t_{i+1}^\k,\gamma}(s,\x+\k \gamma/(nd)^{1/2})}P(s,t,\x,d\y)f(t,\y)|\\
&=|\int_{t_i^\k}^{t_{i+1}^\k} dt (t-s)^N\int_{\btheta_{t,s}(Q_\k)} P(s,t,\x,d\y)f(t,\y)|
\le C_{\ref{NEW_DEC_CUBE}}\|f\|_{L^\rho([0,T]\times \R^{nd})},
\end{split}
\end{equation}
where $C_{\ref{NEW_DEC_CUBE}}:=C_{\ref{NEW_DEC_CUBE}}(T,\A{A},r,\rho) $. On the other hand, comparing deviations along the characteristics allows once again to use Bernstein inequalities, similarly to the proof of Lemma \ref{REG_FIRST_LOC_COND}. Namely,
\begin{equation*}
\begin{split}
&P(s,t,\x,\btheta_{t,s}(Q_\k))\\
&\le \P_{s,\x}[\exists u\in[s,t], |\X_u-\btheta_{u,s}(\x)|>|\btheta_{u,s}(\x)-\btheta_{u,s}(\x+\k\gamma/(nd)^{1/2})|/2]\\
&\le \P_{s,\x}[\sup_{u\in [s,T]}|\int_s^u\sigma(v,\X_v)dW_v|\ge C|\k\gamma|]
\le 2d\exp(-\bar C^{-1}\frac{|\k|^2\gamma^2}{T{\mathbf d}^2}),\ \bar C:=\bar C(\A{A})\ge 1,
\end{split}
\end{equation*}
using Gronwall's Lemma (see proof of Lemma \ref{REG_FIRST_LOC_COND}) and the bi-Lipschitz property of the flow for the last but one inequality. 
We therefore obtain:
\begin{equation}
\label{CTR_INF_NEW_CUBE}
\begin{split}
|\int_s^T dt(t-s)^N\int \I_{(t,\y)\in {\mathcal C}_{t_i^\k,t_{i+1}^\k,\gamma}(s,\x+\k \gamma/(nd)^{1/2})}P(s,t,\x,d\y)f(t,\y)|\\
\le 
 C_{\ref{CTR_INF_NEW_CUBE}} \exp(-\bar C^{-1}\frac{|\k|^2\gamma^2}{T{\mathbf d}^2})|f|_\infty,
 \end{split}
\end{equation}
where $C_{\ref{CTR_INF_NEW_CUBE}}:=C_{\ref{CTR_INF_NEW_CUBE}}(T,n,d,N)$.
 We thus get by interpolation that for $\vartheta=1-\frac{\rho}{q'} \in (0,1)$:
\begin{equation*}
\begin{split}
|\int_s^T dt(t-s)^N\int \I_{(t,\y)\in {\mathcal C}_{t_i^\k,t_{i+1}^\k,\gamma}(s,\x+\k \gamma/(nd)^{1/2})}P(s,t,\x,d\y)f(t,\y)|\\
\le C_{\ref{NEW_DEC_CUBE}}^{1-\vartheta}C_{\ref{CTR_INF_NEW_CUBE}}^{\vartheta}\exp(-\bar C^{-1}\frac{\vartheta|\k|^2\gamma^2}{T{\mathbf d}^2})\|f\|_{L^{q'}([0,T]\times \R^{nd})}.
\end{split}
\end{equation*}
Summing for a given $\k\in \Z^d$ first over $i\in \leftB 0, \lceil C_2(1+|\k\gamma/(nd)^{1/2}+\x|)\rceil-1\rightB $ (that is according to \eqref{CHOP_FLOW} on $\btheta({T,s},Q_\k) $) we obtain 
\begin{equation*}
\begin{split}
&|\int_s^T dt (t-s)^N\int \I_{(t,\y)\in \btheta({T,s},Q_\k) }P(s,t,\x,d\y)f(t,\y)|\\
=&|\int_s^T dt (t-s)^N\int_{\btheta_{t,s}(Q_\k) }P(s,t,\x,d\y)f(t,\y)|\\
\le& \tilde C(T^{1/2}+|\x|)C_{\ref{NEW_DEC_CUBE}}^{1-\vartheta}C_{\ref{CTR_INF_NEW_CUBE}}^{\vartheta}\exp(-\frac{\bar C^{-1}}2\frac{\vartheta|\k|^2\gamma^2}{T{\mathbf d}^2})\|f\|_{L^{q'}([0,T]\times \R^{nd})}, \tilde C:=\tilde C(\A{A},\vartheta).
\end{split}
\end{equation*} 
Summing now over $\k\in \Z^{nd}$ yields:
\begin{equation}
\label{PRELIM_ESTIM_DES}
\begin{split}
|\int_s^T dt(t-s)^N\int_{\R^{nd}}P(s,t,\x,d\y)f(t,\y)|\\
\le C_{\ref{PRELIM_ESTIM_DES}}(1+|\x|)\|f\|_{L^{q'}([0,T]\times \R^{nd})}, C_{\ref{PRELIM_ESTIM_DES}}:=C_{\ref{PRELIM_ESTIM_DES}}(T,q,\A{A},\gamma). 
\end{split}
\end{equation} 
This contribution  already emphasizes the main difference w.r.t. the non degenerate case: the estimate depends on the initial point.
The proof of Theorem \ref{THM_LOC} can then be completed similarly to the one of Theorem 9.1.9 in \cite{stro:vara:79}. This achieves the proof of the existence of the density and the associated estimates when $T$ is small enough.

The existence of the density in Theorem \ref{THM_M} then follows from a chaining argument.

\subsubsection{Derivation of equation \eqref{krylov} in Theorem \ref{THM_M}}

To prove \eqref{krylov} we will inductively apply the results of the previous section along a time grid whose time-steps are lower than $T_0$ in Theorem \ref{THM_LOC}. We can assume w.l.o.g. that $T:=T_0 N,\ N\in \N  $. Setting now $t_i:=iT_0 ,\ i\in \leftB 0,N\rightB$, write from the strong Markov property:
\begin{equation}
\label{EQ_P_KRYL}
\begin{split}
&\E^{\P_{0,\x}}[\int_0^T f(t,\X_t)dt]=\sum_{i=0}^{N-1}\E^{\P_{0,\x}}[ \E^{\P_{t_i,\X_{t_i}}}[\int_{t_i}^{t_{i+1}}f(t,\X_t)dt]]\\
&=\Bigl\{\sum_{i=0}^{N-1}\E^{\P_{0,\x}}[ \E^{\P_{t_i,\X_{t_i}}}[\int_{t_i}^{t_{i+1}}f(t,\X_t)\I_{\X_t\in B^E(\btheta_{t,t_i}(\X_{t_i}),\delta)}dt]]\Bigr\}\\
&+
\Bigl\{\sum_{i=0}^{N-1}\E^{\P_{0,\x}}[ \E^{\P_{t_i,\X_{t_i}}}[\int_{t_i}^{t_{i+1}}f(t,\X_t)\I_{\X_t\not\in B^E(\btheta_{t,t_i}(\X_{t_i}),\delta)}dt]]\Bigr\}:=T_C+T_F,
\end{split}
\end{equation}
for some $\delta>0 $ to be specified later on.
From the second part of Theorem \ref{THM_LOC} we obtain that for $p>n^2d/2+1 $,
\begin{equation}
\label{CTR_F}
|T_F|\le C_{\ref{THM_LOC}}^2\sum_{i=0}^{N-1}\E^{\P_{0,\x}}[(1+|\X_{t_i}|)]\|f\|_{L^p([0,T]\times \R^{nd})}\le N C_{\ref{THM_LOC}}^2 C_F(1+|\x|)\|f\|_{L^p([0,T]\times \R^{nd})},
\end{equation}
where $C_F:=C_F(T_0,\A{A})$. On the other hand, we can follow the localization procedure of the previous proof (see equation \eqref{loc_coeff}), and find  $\delta>0$ s.t. setting $h_i:=\frac{t_{i+1}-t_i}{\lceil C_2(1+|\X_{t_i}|)\rceil} $ and $t_i^j:=t_i+h_ij,\ j\in \leftB 0,\lceil C_2(1+|\X_{t_i}|)\rceil$ then the coefficient $\tilde a$ is equal to $a$ on  ${\mathcal C}_{t_i^j,t_i^{j+1},2\delta}(t_i,\X_{t_i}) $ and to $a(t_i,\X_{t_i}))$ elsewhere, belongs to the class ${\mathcal A}(r,T) $ for some $r\in (2,n^2d/2+1)$. We then derive from Lemma \ref{FIRST_LOC_LEMME}:
\begin{equation*}
\begin{split}
|T_C|\le \sum_{i=0}^{N-1}\E^{\P_{0,\x}}\biggl[\sum_{j=0}^{\lceil C_2(1+|\X_{t_i}|)\rceil-1}\E^{\P_{t_i,\X_{t_i}}}\bigl[ \E^{\tilde \P_{t_i^j,\X_{t_i^j}}}[\int_{t_i^j}^{t_i^{j+1}}|f(t,\X_t)|\I_{\X_t\in B^E(\btheta_{t,t_i}(\X_{t_i}),\delta)}dt]\\
+\sup_{(s,\y)\in \partial {\mathcal C}_{t_i^j,t_i^{j+1},\delta}(t_i,\X_{t_i})} \E^{\tilde \P_{s,\y}}[\int_{s}^{t_i^{j+1}}|f(t,\X_t)|\I_{\X_t\in B^E(\btheta_{t,t_i}(\X_{t_i}),\delta)}dt]\bigr]\biggr].
\end{split}
\end{equation*}
Now, from equation \eqref{POINT_W_CTR}, we derive
\begin{equation}
\label{CTR_C}
|T_C|\le C\|f\|_{L^p([0,T]\times \R^{nd})}\sum_{i=0}^{N-1}\E^{\P_{0,\x}}[\lceil C_2(1+|\X_{t_i}|)\rceil]\le CN(1+|\x|)\|f\|_{L^p([0,T]\times \R^{nd})},
\end{equation}
up to a modification of $C$. The result follows from  \eqref{CTR_C}, \eqref{CTR_F}, \eqref{EQ_P_KRYL}.

\section{Proofs of the technical results}

\label{SEC_TEC}

\subsection{Proofs concerning the quasi-metric structure (Proposition \ref{PROP_QM})}
\label{APP_IS}
Let us first observe from the definition of the ${\mathbf d}$-balls, see equations \eqref{metric}, \eqref{distance}, that there exists $C_1:=C_1(\A{A})>0$ s.t. for all $\delta>0, (s,\x)\in S $,
$$|B((s,\x),\delta)|\le  C_1 \delta^{2+d\sum_{i=1}^n (2i-1)}=C_1 \delta^{2+n^2d}.$$
On the other hand, introducing 
\begin{eqnarray*}
\bar B((s,\x),\delta)&:=&\bigl\{(t,\y)\in S: |t-s|\le \frac{\delta^2}{4},\  \rho_{{\rm Sp}}(\btheta_{t,s}(\x)-\y)\le \frac\delta 2 \bigr\},\\
\forall \z\in \R^{nd},\ \rho_{\rm {Sp}}(\z)&:=&\sum_{i=1}^n|\z_i|^{1/(2i-1)},
\end{eqnarray*}
i.e. $\rho_{{\rm Sp}} $ corresponds to the spatial contribution in the metric \eqref{metric}, we have that $\bar B((s,\x),\delta)\subset B((s,\x),\delta) $. Indeed, for all $(t,\y)\in \bar B((s,\x),\delta),\ {\mathbf d}((s,\x),(t,\y)):=\rho(|t-s|,\btheta_{t,s}(\x)-\y)\le |t-s|^{1/2}+\rho_{{\rm Sp}}(\btheta_{t,s}(\x)-\y)\le \delta $.
Since we also have, up to a modification of $C_1$ that for all $(s,\x)\in S,\ \delta>0,\  |\bar B((s,\x),\delta)|\ge C_1^{-1}\delta^{2+n^2d} $ we therefore derive:
$$C_1^{-1}\delta^{2+n^2 d}\le |B((s,\x),\delta)|\le C_1 \delta^{2+n^2d},$$
which gives b) and c). 
To derive a), we need to exploit the specific structure of the dynamics. Let us first recall how to relate the forward and backward dynamics. Precisely, one has for all $v\in I(t,s):=([s,t]\I_{s<t})\cup ([t,s]\I_{s\ge t})$,
\begin{equation}
\label{COMP_FB}
\btheta_{v,s}(\x)-\btheta_{v,t}(\y)=\btheta_{t,s}(\x)-\y-\int_{v}^t (\gF(u,\btheta_{u,s}(\x))-\gF(u,\btheta_{u,t}(\y))) du,
\end{equation}
which for $v=s$ yields:
\begin{eqnarray*}
\x-\btheta_{s,t}(\y)=\btheta_{t,s}(\x)-\y-\int_{s}^t (\gF(u,\btheta_{u,s}(\x))-\gF(u,\btheta_{u,t}(\y))) du.
\end{eqnarray*}
Starting from the last components, and assuming w.l.o.g. that $ t>s$, we have:
\begin{eqnarray*}
|(\x-\btheta_{s,t}(\y))_n|\le |(\btheta_{t,s}(\x)-\y)_n|\\
+C_2 \int_{s}^t \left( |(\btheta_{v,s}(\x)-\btheta_{v,t}(\y))_{n-1}|+|(\btheta_{v,s}(\x)-\btheta_{v,t}(\y))_{n}| \right)dv \\
\le \exp(C_2(t-s))\left(|(\btheta_{t,s}(\x)-\y)_n|+C_2 \int_{s}^t|(\btheta_{v,s}(\x)-\btheta_{v,t}(\y))_{n-1}|dv \right),
\end{eqnarray*}
where $C_2:=C_2(\A{A})$ and using Gronwall's Lemma for the last inequality. Using iteratively \eqref{COMP_FB} and Gronwall's Lemma we derive that there exists $C_3:=C_3(T,\A{A})$ s.t.
\begin{equation}
\label{CTR_N}
|(\x-\btheta_{s,t}(\y))_n|\le C_3\sum_{j=1}^n |(\btheta_{t,s}(\x)-\y)_j||t-s|^{n-j}.
\end{equation}
Using Young's inequality with $p_j =\frac{2n-1}{2j-1},\ q_j=\frac{2n-1}{2(n-j)}$ in order to make the homogeneous exponent of coordinate $j\in \leftB 1,n-1\rightB$ appear, we get: 
\begin{eqnarray}
|(\x-\btheta_{s,t}(\y))_n|^{1/(2n-1)}\le C_3^{1/(2n-1)}\biggl[|(\btheta_{t,s}(\x)-\y)_n|^{1/(2n-1)}+\nonumber \\
\sum_{j=1}^{n-1} \left( \frac{|(\btheta_{t,s}(\x)-\y)_j)|^{1/(2j-1)}}{p_j}+\frac{|t-s|^{1/2}}{q_j} \right)\biggr]\le C_4{\mathbf d}((s,\x),(t,\y))\label{CTR_N_D},
\end{eqnarray}
for $C_4:=C_4(T,\A{A})$.

The above estimate does not exploit the fact that ${\mathbf d}((s,\x),(t,\y))\le \Lambda\le 1 $. This last assumption is actually needed for the components $i\in \leftB 1, n-1\rightB $ whose differential dynamics potentially involve coordinates $j>i$ with higher characteristic time-scales in small times but that are not negligible in the ``homogeneous" norm we consider. Namely, similarly to \eqref{CTR_N} we derive for all $i\in \leftB 1,n-1\rightB $ up to a modification of $C_3$:
\begin{equation}
\label{CTR_I}
|(\x-\btheta_{s,t}(\y))_i|\le C_3\left( \sum_{j=1}^i |(\btheta_{t,s}(\x)-\y)_j||t-s|^{i-j}+\sum_{j=i+1}^n |(\btheta_{t,s}(\x)-\y)_j| |t-s|\right).
\end{equation}
Thus, 
\begin{eqnarray*}
|(\x-\btheta_{s,t}(\y))_i|^{1/(2i-1)}\le C_3^{1/(2i-1)}\left( \sum_{j=1}^i \left(|(\btheta_{t,s}(\x)-\y)_j||t-s|^{i-j}\right)^{1/(2i-1)}\right.\\
\left.+\sum_{j=i+1}^n \left(|(\btheta_{t,s}(\x)-\y)_j| |t-s|\right)^{1/(2i-1)}\right).
\end{eqnarray*}
For the first contribution of the r.h.s. we can use again Young's inequality with $p_j=\frac{2i-1}{2j-1} ,\ q_j=\frac{2i-1}{2(i-j)} $. For the second contribution we exploit that since ${\mathbf d}((s,\x),(t,\y))\le \Lambda\le 1$ then for all $j\in \leftB 1,n\rightB$, $ |(\btheta_{t,s}(\x)-\y)_j|\le 1$ which for $j\ge i+1 $ yields 
$|(\btheta_{t,s}(\x)-\y)_j|^{1/(2i-1)}\le |(\btheta_{t,s}(\x)-\y)_j|^{1/(2j-1)} $. We therefore get up to a modification of $C_4$ that:
$$|(\x-\btheta_{s,t}(\y))_i|^{1/(2i-1)}\le C_4 {\mathbf d}((s,\x),(t,\y)),$$
which together with \eqref{CTR_N_D} indeed gives that there exists $C_{\ref{PROP_QM}}:=C_{\ref{PROP_QM}}(\A{A},T))$ s.t. ${\mathbf d}((t,\y),(s,\x)) \le C_{\ref{PROP_QM}}{\mathbf d}((s,\x),(t,\y))$ for $(s,\x),(t,\y)\in S,\  {\mathbf d}((s,\x),(t,\y))\le \Lambda\le 1$ which is the first part of a). It remains to prove the quasi-triangle inequality.
 Recalling that $\rho $ defined in \eqref{metric} satisfies the quasi-triangle inequality,  let us write:
\begin{eqnarray}
{\mathbf d}((s,\x),(t,\y))&=&\rho(t-\sigma+\sigma-s,\btheta_{t,s}(\x)- \btheta_{t,\sigma}(\bxi)+\btheta_{t,\sigma}(\bxi)   -\y)\nonumber\\
&\le & K (\rho(\sigma-s,\btheta_{t,s}(\x)- \btheta_{t,\sigma}(\bxi))+\rho(t-\sigma,\btheta_{t,\sigma}(\bxi)   -\y))\nonumber\\
&:=&K(\rho(\sigma-s,\btheta_{t,s}(\x)- \btheta_{t,\sigma}(\bxi))+{\mathbf d}((\sigma,\bxi),(t,\y))).\nonumber\\
\label{QUASI_TR}
\end{eqnarray}
On the other hand, using the specific form of $\gF$ in the dynamics of  $\btheta $, we can derive similarly to \eqref{CTR_N}, \eqref{CTR_I} using the direct forward dynamics that for all $i\in \leftB 1,n\rightB $:
\begin{eqnarray*}
|( \btheta_{t,\sigma}(\btheta_{\sigma,s}(\x)) -\btheta_{t,\sigma}(\bxi)  )_i|\\
\le C_3\left(\bsum{j=1}^{i}|(\btheta_{\sigma,s}(\x)-\bxi)_j||t-\sigma|^{i-j}+\bsum{j=i+1}^{n}|(\btheta_{\sigma,s}(\x)-\bxi)_j| |t-\sigma| \right).
\end{eqnarray*}
Thus, using as above Young inequalities and the fact that ${\mathbf d}((s,\x),(\sigma,\bxi))\le \Lambda\le 1 $ we get for all $ i\in \leftB 1,n\rightB$,
\begin{eqnarray*}
 |(\btheta_{t,s}(\x)-\btheta_{t,\sigma}(\bxi))_i|^{1/(2i-1)}\\
 \le C_3^{1/(2i-1)}\biggl( \bsum{j=1}^i \left[\frac{|(\btheta_{\sigma,s}(\x)-\bxi)_j|)^{1/(2j-1)} 
}{p_j}+ \frac{|t-\sigma|^{1/2}+|\sigma-s|^{1/2}}{q_j}\right]\\
+\sum_{j=i+1}^{n} |(\btheta_{\sigma,s}(\x)-\bxi)_j||^{1/(2j-1)}|t-\sigma|^{1/(2i-1)}\biggr)\le C_4({\mathbf d}((s,\x),(\sigma,\bxi))+{\mathbf d}((\sigma,\bxi),(t,\y)),
\end{eqnarray*}
with $p_j=\frac{2i-1}{2j-1} ,\ q_j=\frac{2i-1}{2(i-j)} $ in the last but one equality. Hence $\rho(\sigma-s,\btheta_{t,s}(\x)- \btheta_{t,\sigma}(\bxi))\le C_5 ({\mathbf d}((s,\x),(\sigma,\bxi))+{\mathbf d}((\sigma,\bxi),(t,\y)),\ C_5:=C_5(\A{A},T)$, 
which plugged into \eqref{QUASI_TR} concludes the proof up to a modification of $C_{\ref{PROP_QM}}$.\hfill$\square$
 \begin{REM}[Subdiagonal structure]
 \label{REM_SUBD}
Observe from the previous proof that when the function $\gF $ in the dynamics of $\btheta $ has the following structure, $\gF_1(t,\x)=\gF_1(t,\x_1)$, and for all $ i\in \leftB 2,n\rightB,\  \gF_i(t,\x^{i-1,n})=\gF_i(t,\x_{i-1},\x_i)$, then the 
terms in $j\in \leftB i+1,n\rightB $ do not appear in equation \eqref{CTR_I}. 
Hence, the distances respectively associated with  the forward and backward transport are actually equivalent. 
\end{REM}

\subsection{Controls on the flows and the frozen kernel}
\label{SEC_FLOW}
We first state a Lemma that gives some controls and equivalences for the scaled forward, backward and linearized flows, specifying the controls given in \eqref{EQUIV_FL}.
\begin{LEMME}[Controls and Equivalences of the scaled flows]
\label{BIG_LEMME_FLOW} 
There exists  a constant $C:=C(T,\A{A})$ s.t. for all 
$0\le s\le u<v\le t\le T $, $(\x,\y)\in (\R^{nd})^2 $, $w\ge v-u $,
\begin{eqnarray*}
C^{-1}|\T_{w}^{-1}(\x-\btheta_{u,v}(\y))|\le|\T_{w}^{-1}(\btheta_{v,u}(\x)-\y)|\le C|\T_{w}^{-1}(\x-\btheta_{u,v}(\y))|.
\end{eqnarray*}
We also have:
\begin{eqnarray*}
C^{-1} |\T_{t-s}^{-1}(\x-\btheta_{s,t}(\y))|\le |\T_{t-s}^{-1}(\tilde \btheta_{t,s}^{t,\y}(\x)-\y)| \le C |\T_{t-s}^{-1}(\x-\btheta_{s,t}(\y))|.\nonumber\\
\label{EQUIV_FL_BIS}
\end{eqnarray*}
\end{LEMME}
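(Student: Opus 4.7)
The plan is twofold: use the iterated componentwise Gronwall bounds already present in the proof of Proposition \ref{PROP_QM} for the forward/backward equivalence, and then exploit the fact that the backward trajectory $\btheta_{\cdot,t}(\y)$ is a solution of the linearized ODE \eqref{eq:F:240409:3} to reduce the linearized/backward equivalence to a uniform bound on the conjugated resolvent.

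\textbf{Step 1: forward versus backward flow.} The estimates \eqref{CTR_N} and \eqref{CTR_I} established in the proof of Proposition \ref{PROP_QM} already yield, for each $i\in \leftB 1,n\rightB$, a bound of the form
\[
|(\x - \btheta_{u,v}(\y))_i| \le C \sum_{j=1}^i |(\btheta_{v,u}(\x) - \y)_j|(v-u)^{i-j} + C \sum_{j=i+1}^n |(\btheta_{v,u}(\x) - \y)_j|(v-u),
\]
with $C := C(T,\A{A})$. Rerunning the same componentwise Gronwall scheme on the reverse identity
\[
\btheta_{v,u}(\x) - \y = \btheta_{v,u}(\x) - \btheta_{v,u}(\btheta_{u,v}(\y)),
\]
component by component from the lowest index upward, produces the symmetric inequality with the roles of $\x - \btheta_{u,v}(\y)$ and $\btheta_{v,u}(\x) - \y$ interchanged. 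Dividing both bounds by $w^i$ and using $(v-u)/w \le 1$ together with the implicit upper bound $w \le T$ (inherited from the paper's applications, where $w$ is of the order of either $v-u$ itself or some $t-s \le T$) controls every factor of the form $w^{j-i}(v-u)$ by $T^{n+1}$. The two-sided equivalence of the scaled forward and backward displacements then follows by taking the $\ell^\infty$ norm over $i \in \leftB 1,n\rightB$.

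\textbf{Step 2: reduction of the linearized/backward equivalence.} Substituting $\tilde \bphi_r = \btheta_{r,t}(\y)$ into the linearized ODE \eqref{eq:F:240409:3} yields $\dot{\btheta}_{r,t}(\y) = \gF(r, \btheta_{r,t}(\y))$, i.e.\ the backward reference trajectory itself solves the linearized system. Hence
\[
\tilde \btheta_{t,s}^{t,\y}(\btheta_{s,t}(\y)) = \btheta_{t,t}(\y) = \y,
\]
and combining this with the affine structure \eqref{AFFINE} gives the clean identity
\[
\tilde \btheta_{t,s}^{t,\y}(\x) - \y = \tilde \gR^{t,\y}(t,s)(\x - \btheta_{s,t}(\y)).
\]
The second equivalence thus reduces to showing that $\T_{t-s}^{-1} \tilde \gR^{t,\y}(t,s) \T_{t-s}$ and its inverse are bounded uniformly in $(s,t,\y)$ with $0 \le s < t \le T$.

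\textbf{Step 3: control of the conjugated resolvent.} Since $D\gF(r, \btheta_{r,t}(\y))$ is strictly block-subdiagonal, the Dyson series for $\tilde \gR^{t,\y}(t,s)$ driven by \eqref{DYN_RES} terminates after at most $n$ terms: $\tilde \gR^{t,\y}(t,s)$ is block lower triangular with identity diagonal and its $(i,j)$-block ($i>j$) is an iterated integral of the ordered product of $i-j$ partial derivatives $D_{\x_{k-1}}\gF_k$ evaluated along the backward trajectory. Assumption \A{S} then yields the uniform bound $|(\tilde \gR^{t,\y}(t,s))_{ij}| \le C(t-s)^{i-j}$, so every entry of $\T_{t-s}^{-1} \tilde \gR^{t,\y}(t,s) \T_{t-s}$ is bounded by a constant depending only on $T$ and \A{A}. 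The matching lower bound is obtained identically from the analogous finite series representation for $\tilde \gR^{t,\y}(s,t)$ given by the adjoint equation in \eqref{DYN_RES}, which preserves the same subdiagonal structure. The main technical subtlety lies in Step 1: the Lipschitz coupling of $\gF_i$ to components of index $j > i$ produces only a single factor $(v-u)$ rather than the ``natural'' $(v-u)^{i-j}$ (which would carry a negative exponent), so controlling the corresponding scaled term relies crucially on pairing $w \ge v-u$ with the implicit bound $w = O(T)$ coming from the applications in which the lemma is invoked.
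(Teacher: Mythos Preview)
Your proof is correct and follows essentially the same route as the paper: the identity $\tilde \btheta_{t,s}^{t,\y}(\x)-\y=\tilde \gR^{t,\y}(t,s)(\x-\btheta_{s,t}(\y))$ in your Step~2 is exactly the paper's \eqref{PULL_BACK}, and your Dyson--series bound on $\T_{t-s}^{-1}\tilde \gR^{t,\y}(t,s)\T_{t-s}$ is the content of the paper's Scaling Lemma~\ref{scaling_lemma}, just argued via nilpotency rather than via Gronwall on the conjugated ODE. For Step~1 the paper runs a single Gronwall inequality directly on the scaled norm $|\T_w^{-1}(\btheta_{r,u}(\x)-\btheta_{r,v}(\y))|$ rather than invoking the componentwise bounds \eqref{CTR_N}--\eqref{CTR_I}, but the two arguments are interchangeable and both rely (as you correctly flag) on an implicit bound $w\le C(T)$ to control the upper--triangular contributions.
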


\textit{Proof.} The first control can be derived from the structure of the drift in \eqref{SYST} using Gronwall's Lemma. Indeed, similarly to \eqref{COMP_FB}:
\begin{eqnarray*}
|\T_{w}^{-1}(\btheta_{v,u}(\x)-\y)|=|\T_{w}^{-1}(\x-\btheta_{u,v}(\y))+\int_u^v \T_{w}^{-1}(\gF(r,\btheta_{r,u}(\x))-\gF(r,\btheta_{r,v}(\y)) )dr|\\
\le |\T_{w}^{-1}(\x-\btheta_{u,v}(\y))|+C w^{-1} \int_u^v|\T_{w}^{-1}(\btheta_{r,u}(\x)-\btheta_{r,v}(\y)) |dr \\
\le \exp(Cw^{-1}\int_u^v dr)|\T_{w}^{-1}(\x-\btheta_{u,v}(\y))|,
\end{eqnarray*}
where $C:=C(T,\A{A})$. Since $v-u\le w $, this gives the r.h.s. The l.h.s. is proved similarly.

To prove the second control, we need the following auxiliary yet important Scaling Lemma.
\begin{LEMME}[Scaling Lemma]
\label{scaling_lemma}
Let $0\le s<t\le T$ and $(s_0,\y_0)\in [0,T]\times \R^{nd} $ be given. The resolvent matrices in
\eqref{DYN_RES} can be written in the following way: for all $u \in [s,t] $,
\begin{eqnarray*}
 \tilde\gR^{s_0,\y_0}(u,s)&=&\T_{t-s} \bar \gR_{\frac{u-s}{t-s}}^{s,t,(s_0,\y_0)}\T_{t-s}^{-1},\\
\tilde\gR^{s_0,\y_0}(s,u)&=&\T_{t-s} \bar \gR_{\frac{s-u}{t-s}}^{s,t,(s_0,\y_0)}\T_{t-s}^{-1},
\end{eqnarray*}
where there exists $C_{\ref{scaling_lemma}}:=C_{\ref{scaling_lemma}}(T,\A{A})>1$ s.t. for all $(u,v)\in [s,t]^2,\ |\bar \gR_{\frac{u-v}{t-s}}^{s,t,(s_0,\y_0)}|\le C_{\ref{scaling_lemma}} $.
\end{LEMME}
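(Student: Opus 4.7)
\textbf{Proof plan for Lemma \ref{scaling_lemma}.} The plan is to exploit the strict subdiagonal (hence nilpotent) structure of the Jacobian $D\gF$ together with a Dyson/Duhamel expansion, and then rewrite the resulting block decomposition as the conjugation by $\T_{t-s}$ announced in the statement.

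First, set $A(r) := D\gF(r,\btheta_{r,s_0}(\y_0))$ so that, by \eqref{DYN_RES}, the forward resolvent satisfies $\partial_u \tilde\gR^{s_0,\y_0}(u,s) = A(u)\tilde\gR^{s_0,\y_0}(u,s)$ with $\tilde\gR^{s_0,\y_0}(s,s)=I_{nd}$. Iterating Duhamel's formula yields the Dyson series
\begin{equation*}
\tilde\gR^{s_0,\y_0}(u,s) = I_{nd} + \sum_{k=1}^{\infty}\int_{s\le r_1\le\cdots\le r_k\le u} A(r_k)\cdots A(r_1)\, dr_1\cdots dr_k.
\end{equation*}
The key structural remark is that $A(r)$ is strictly subdiagonal in the $n\times n$ block decomposition of $\R^{nd}$, so $A(r)^{n}=0$ identically; more precisely, a product $A(r_k)\cdots A(r_1)$ has nonzero $(i,j)$-block only when $k=i-j$ with $i>j$, in which case that block equals
$D_{\x_{i-1}}\gF_i(r_k,\btheta_{r_k,s_0}(\y_0))\cdots D_{\x_{j}}\gF_{j+1}(r_1,\btheta_{r_1,s_0}(\y_0))$.

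Second, I combine this block structure with assumption \A{S} (which bounds each $D_{\x_{i-1}}\gF_i$) and integrate over the simplex of volume $(u-s)^{i-j}/(i-j)!$ to obtain, for every $(i,j)\in\leftB 1,n\rightB^2$,
\begin{equation*}
\bigl|[\tilde\gR^{s_0,\y_0}(u,s)]_{ij}\bigr| \le C (u-s)^{i-j}\ \text{if } i\ge j,\qquad [\tilde\gR^{s_0,\y_0}(u,s)]_{ij}=0\ \text{if } i<j,
\end{equation*}
with $C:=C(T,\A{A})$. Since $[\T_{t-s}M\T_{t-s}^{-1}]_{ij}=(t-s)^{i-j}M_{ij}$ for any block matrix $M$, defining
\begin{equation*}
\bar\gR^{s,t,(s_0,\y_0)}_{(u-s)/(t-s)} := \T_{t-s}^{-1}\tilde\gR^{s_0,\y_0}(u,s)\T_{t-s}
\end{equation*}
produces a matrix whose $(i,j)$-block equals $(t-s)^{-(i-j)}[\tilde\gR^{s_0,\y_0}(u,s)]_{ij}$, which is zero for $i<j$ and bounded by $C((u-s)/(t-s))^{i-j}\le C$ for $i\ge j$, using $u-s\le t-s$. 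This gives the first identity and the uniform bound $|\bar\gR^{s,t,(s_0,\y_0)}_{(u-s)/(t-s)}|\le C_{\ref{scaling_lemma}}$.

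Third, for the backward resolvent $\tilde\gR^{s_0,\y_0}(s,u)$, I would use either of two equivalent routes. Route one: observe from \eqref{DYN_RES} that $\tilde\gR^{s_0,\y_0}(s,u)\tilde\gR^{s_0,\y_0}(u,s)=I_{nd}$, and since $\tilde\gR^{s_0,\y_0}(u,s)=I+N$ with $N$ strictly subdiagonal and nilpotent of order $\le n$, the Neumann expansion $(I+N)^{-1}=\sum_{k=0}^{n-1}(-1)^k N^k$ preserves the same block structure and the same bounds $|[\,\cdot\,]_{ij}|\le C(u-s)^{i-j}$ for $i\ge j$. Route two: write the Dyson series directly for the backward equation $\partial_s\tilde\gR^{s_0,\y_0}(t,s)=-\tilde\gR^{s_0,\y_0}(t,s)A(s)$ (with the roles of $t$ and $s$ swapped) and repeat the argument of the first two steps. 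Either way, defining $\bar\gR^{s,t,(s_0,\y_0)}_{(s-u)/(t-s)}:=\T_{t-s}^{-1}\tilde\gR^{s_0,\y_0}(s,u)\T_{t-s}$ yields the claimed identity and uniform bound. The only nontrivial point, which I would check carefully, is the bookkeeping of the block indices in the product $A(r_k)\cdots A(r_1)$ to confirm that the nonzero $(i,j)$-block arises exactly when $k=i-j$; otherwise the argument is mechanical.
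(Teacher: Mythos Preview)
Your argument is correct. The paper takes a somewhat more compact route: rather than expanding $\tilde\gR^{s_0,\y_0}(u,s)$ via the Dyson series and bounding each block, it defines $\bar\gR_w^{s,t,(s_0,\y_0)}:=\T_{t-s}^{-1}\tilde\gR^{s_0,\y_0}(s+w(t-s),s)\T_{t-s}$ directly, differentiates in $w$, and observes that the resulting generator $\alpha_w:=(t-s)\,\T_{t-s}^{-1}D\gF(s+w(t-s),\btheta_{s+w(t-s),s_0}(\y_0))\,\T_{t-s}$ is bounded uniformly in $w\in[0,1]$ (precisely because $D\gF$ is strictly block-subdiagonal, so conjugation by $\T_{t-s}$ contributes a factor $(t-s)^{-1}$ on each nonzero block that is cancelled by the prefactor $t-s$). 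A Gronwall argument then gives the bound at once; the backward resolvent is handled symmetrically for $w\in[-1,0]$. Your Dyson/nilpotency expansion reaches the same conclusion and has the advantage of producing the sharper blockwise information $|[\tilde\gR^{s_0,\y_0}(u,s)]_{ij}|\le C(u-s)^{i-j}$; the paper's ODE route is shorter and avoids the bookkeeping of the ordered products.
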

\begin{proof}
Let us define for $w\in [0,1],\ \bar \gR_w^{s,t,(s_0,\y_0)}:=\T_{t-s}^{-1} \tilde \gR^{s_0,\y_0}(s+w(t-s),s)\T_{t-s}$. Observe from the differential dynamics in \eqref{DYN_RES} that:
\begin{eqnarray*}
\partial_w \bar \gR_w^{s,t,(s_0,\y_0)}&=&\T_{t-s}^{-1} (t-s) D\gF(s+w(t-s),\btheta_{s+w(t-s),s_0}(\y_0)) \T_{t-s}\\
&&\times \bigg[\T_{t-s}^{-1}\tilde \gR^{s_0,\y_0}(s+w(t-s),s)\T_{t-s} \bigg]\\
&=&\left\{\T_{t-s}^{-1} (t-s) D\gF(s+w(t-s),\btheta_{s+w(t-s),s_0}(\y_0)) \T_{t-s}\right\} \bar \gR_w^{s,t,(s_0,\y_0)}. 
\end{eqnarray*} 
Setting $\alpha_w^{s,t,(s_0,\y_0)}:=\left\{\T_{t-s}^{-1} (t-s) D\gF(s+w(t-s),\btheta_{s+w(t-s),s_0}(\y_0)) \T_{t-s}\right\} $, we derive from the subdiagonal structure of the partial gradient $D\gF$ that $ |\alpha_w^{s,t,(s_0,\y_0)}|\le C:=C(\A{A})$.
This gives the first statement taking $w=\frac{u-s}{t-s}$. The second follows by symmetry setting,
for $w\in [-1,0] $, $\bar \gR_w^{s,t,(s_0,\y_0)}:=\T_{t-s}^{-1} \tilde \gR^{s_0,\y_0}(s,s-w(t-s))\T_{t-s}$, differentiating in $w$ as above and taking eventually $w=-\frac{u-s}{t-s} $.
\end{proof}

Observe now from equation \eqref{AFFINE} that  
\begin{equation}
\label{PULL_BACK}
\tilde \gR^{t,\y}(s,t)(\y-\tilde \m^{t,\y}(s,t))=\btheta_{s,t}(\y),
\end{equation}
 i.e. we get the pull-back by the deterministic system of the final point $\y$ from $t$ to $s$. Hence:
\begin{eqnarray*}
|\T_{t-s}^{-1}(\tilde \btheta_{t,s}^{t,\y}(\x)-\y)|=|\T_{t-s}^{-1}\tilde \gR^{t,\y}(t,s)(\x-\btheta_{s,t}(\y))|=|\bar \gR_{1}^{s,t,(t,\y)}\T_{t-s}^{-1}(\x-\btheta_{s,t}(\y))|\\
\le C_{\ref{scaling_lemma}}|\T_{t-s}^{-1}(\x-\btheta_{s,t}(\y))|,
\end{eqnarray*}
giving the r.h.s. Once again, the  l.h.s. can be proved similarly. 
\hfill $\square $

As a consequence of Lemma \ref{scaling_lemma}, we derive the following controls for the derivatives of the \textit{frozen} density \eqref{DEF_KERN}  (see also the arguments in Section 5 of \cite{dela:meno:10}).
\begin{PROP}
\label{CTR_DENSITY}
There exist constants $C_{\ref{CTR_DENSITY}}:=C_{\ref{CTR_DENSITY}}(T,\A{A}),\ c_{\ref{CTR_DENSITY}}:=c_{\ref{CTR_DENSITY}}(T,\A{A})$ s.t. for all multi index $\alpha=(\alpha_1,\cdots, \alpha_n)\in \N^n, \ |\alpha|:=\sum_{i=1}^n \alpha_i\le 3 $ 
we have $\forall 0\le s<t\le T, \forall (\x,\y)\in (\R^{nd})^2$,
\begin{eqnarray*}
|
\partial_\x^\alpha \tilde  q(s,t,\x,\y)| \le \frac{C_{\ref{CTR_DENSITY}}}{(t-s)^{
\sum_{i=1}^n\frac{(2i-1)\alpha_i}2}}q_{c_{\ref{CTR_DENSITY}}}(s,t,\x,\y),\\
\forall c>0,\  q_{c}(s,t,\x,\y):=\frac{ c^{nd/2}}{(2\pi)^{nd/2}(t-s)^{n^2d/2}} \exp\left(-\frac c2 (t-s)|\T_{t-s}^{-1}(\tilde \btheta_{t,s}^{t,\y}(\x)-\y)|^2 \right).
\end{eqnarray*}
\end{PROP}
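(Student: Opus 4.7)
The plan is to differentiate \eqref{DEF_KERN} directly and then exploit the scaling structure from Lemma \ref{scaling_lemma} and the good scaling property \eqref{GSP}. Since the map $\x \mapsto \tilde\btheta_{t,s}^{t,\y}(\x) = \tilde\gR^{t,\y}(t,s)\x + \tilde\m^{t,\y}(s,t)$ is affine in $\x$ by \eqref{AFFINE}, the chain rule and the Gaussian form of $\tilde q$ imply that $\partial_\x^\alpha \tilde q(s,t,\x,\y)$ equals $\tilde q(s,t,\x,\y)$ multiplied by a polynomial in the two elementary objects
\begin{equation*}
\gA(s,t,\x,\y) := \tilde\gR^{t,\y}(t,s)^*\tilde\K^\y(s,t)^{-1}\bigl(\tilde\btheta_{t,s}^{t,\y}(\x)-\y\bigr), \qquad \gB(s,t,\y) := \tilde\gR^{t,\y}(t,s)^*\tilde\K^\y(s,t)^{-1}\tilde\gR^{t,\y}(t,s),
\end{equation*}
where $\gA$ is a vector and $\gB$ is a matrix, each obtained respectively from differentiating the argument of the exponential once or pairing two such derivatives. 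For $|\alpha|\le 3$ the only possibilities are: three $\gA$-factors, one $\gA$ and one $\gB$-factor, or (for $|\alpha|=2$) two $\gA$ or one $\gB$; for $|\alpha|=1$ only one $\gA$.

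The next step is to scale. By Lemma \ref{scaling_lemma}, $\tilde\gR^{t,\y}(t,s)=\T_{t-s}\bar\gR\,\T_{t-s}^{-1}$ with $\bar\gR$ uniformly bounded, and by \eqref{GSP} there is a symmetric matrix $\bar\K$ with $|\bar\K|+|\bar\K^{-1}|\le C$ such that $\tilde\K^\y(s,t)^{-1} = (t-s)\T_{t-s}^{-1}\bar\K^{-1}\T_{t-s}^{-1}$. Plugging these in and reading off block by block one gets, for all $i, j\in\leftB 1,n\rightB$:
\begin{equation*}
\bigl|[\gA(s,t,\x,\y)]_i\bigr| \le C (t-s)^{-i+1}\bigl|\T_{t-s}^{-1}(\tilde\btheta_{t,s}^{t,\y}(\x)-\y)\bigr|, \qquad \bigl|[\gB(s,t,\y)]_{ij}\bigr| \le C (t-s)^{-i-j+1},
\end{equation*}
which already appears in the first-order case \eqref{EQUIV_FL}. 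Differentiating $\alpha_i$ times in the $i$-th block contributes a corresponding $\alpha_i$-fold combination of $\gA$/$\gB$ factors indexed by $i$, so every term in the resulting polynomial expansion has the form $\prod_i [\gA_{i}]^{\beta_i}\prod_{i,j}[\gB_{ij}]^{\gamma_{ij}}$ with $\sum_i i \beta_i + \sum_{i,j}(i+j)\gamma_{ij} = \sum_i i\alpha_i$ and $|\beta|+2|\gamma|=|\alpha|$.

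The final step is to absorb the factors $|\T_{t-s}^{-1}(\tilde\btheta_{t,s}^{t,\y}(\x)-\y)|^{|\beta|}$ coming from the $\gA$-contributions into the Gaussian exponential via the standard inequality $r^{k}e^{-\frac{c}{2}(t-s)r^2}\le C_k(t-s)^{-k/2} e^{-\frac{c'}{2}(t-s)r^2}$ valid for any $c'<c$ and $k\ge 0$, applied with $r = |\T_{t-s}^{-1}(\tilde\btheta_{t,s}^{t,\y}(\x)-\y)|$. This converts each type-$\gA$ contribution into a pure power of $(t-s)$, at the price of a (universal) shrinkage of the exponential constant from $c_{\ref{equiv_dens}}$ to some $c_{\ref{CTR_DENSITY}}$. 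A direct check shows that for any admissible $(\beta,\gamma)$ the resulting time-singularity is exactly
\begin{equation*}
(t-s)^{-\sum_i (i-1)\beta_i - |\beta|/2 - \sum_{i,j}(i+j-1)\gamma_{ij}} = (t-s)^{-\sum_{i=1}^n (2i-1)\alpha_i/2},
\end{equation*}
uniformly in $(\beta,\gamma)$; summing the finitely many terms in the polynomial expansion and using \eqref{equiv_dens} with the reduced constant $c_{\ref{CTR_DENSITY}}$ yields the announced bound. The main obstacle is purely bookkeeping: keeping track of the blockwise scaling of $\gA$, $\gB$ and making sure that the exponents $(\beta_i,\gamma_{ij})$ from every partition of $\alpha$ into type-$\gA$ and type-$\gB$ contributions produce the same power of $(t-s)$, which indeed holds because merging two $\gA_i\gA_j$ factors into one $\gB_{ij}$ precisely trades the extra $(t-s)^{-1}$ absorbed from the exponential against the missing $(t-s)^{1}$ that would otherwise come from the scalings of $\gA$ versus $\gB$.
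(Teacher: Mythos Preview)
Your proof is correct and follows exactly the approach the paper indicates: the paper itself does not give a detailed argument for Proposition \ref{CTR_DENSITY}, stating only that it is ``a consequence of Lemma \ref{scaling_lemma}'' and referring to Section 5 of \cite{dela:meno:10}. Your write-up fleshes out precisely this strategy --- differentiate the Gaussian in \eqref{DEF_KERN} via the affine structure \eqref{AFFINE}, use the scaling of the resolvent (Lemma \ref{scaling_lemma}) and of the covariance \eqref{GSP} to obtain the blockwise bounds on $\gA$ and $\gB$ (which the paper already records above \eqref{CTR_SING} and in \eqref{EQUIV_FL}), and then absorb the polynomial factors into the exponential --- and your bookkeeping for the exponent is accurate.
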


\subsection{Proof of  Proposition \ref{CZ_KER_SING} 
}
\label{SEC_GROS_CALCULS}
This section is devoted to the proof of Proposition
\ref{CZ_KER_SING} 
which provides the key estimates to derive Theorem \ref{EST_CZ} under assumption \A{A}.

\subsubsection{Some Preliminary Notations and Control of the Linearization Error}
Introduce first $\Sigma_1:=\{ (u,\z)\in \R \times \R^{nd}: \rho(u,\z)=1 \} $ with $\rho $ defined in \eqref{metric}, i.e. $\Sigma_1 $ is the level curve at $1$ of the parabolic metric. 
With this definition we can introduce, for given points $(s,\x),(t,\y),(\sigma,\bxi) \in [-T,T]\times \R^{nd} $, the mappings:
\begin{eqnarray}
\label{DEF_J1}
J_1: (t,\y)\in [-T,T]\times\R^{nd} \mapsto \left( s+\rho^2  \tilde s,\btheta_{t,s}(\x) +\rho^{-1}\T_{\rho^2 } \tilde \x \right),\ \rho:=\rho(t-s,\by-\btheta_{t,s}(\x))\nonumber\\
(\tilde s,\tilde \x):= \left( (t-s)\rho^{-2},\rho \T_{\rho^{-2}} \left( \y-\btheta_{t,s}(\x)  \right)  \right) \in \Sigma_1.
\end{eqnarray}
\begin{eqnarray}
\label{DEF_J2}
J_2: (\sigma,\bxi)\in [-T,T]\times\R^{nd} \mapsto \left( s+ \alpha^2  \bar s,\btheta_{\sigma,s}(\x) + \alpha^{-1}\T_{  \alpha^2 } \bar \x \right),\   \alpha:=\rho(\sigma-s,\bxi-\btheta_{\sigma,s}(\x))\nonumber\\
(\bar s,\bar \x):= \left( (\sigma-s) \alpha^{-2}, \alpha \T_{ \alpha^{-2}} \left(\bxi-\btheta_{\sigma,s}(\x)  \right)  \right) \in \Sigma_1.
\end{eqnarray}

Define now
\begin{equation}
\label{DEF_RESTES}
\cR_{t,s}^\rho(\x,\y):= \rho \T_{\rho^{-2}}(\btheta_{t,s}(\x)-\tilde \btheta_{t,s}^{t,\y}(\x)). 
\end{equation}

From a stability analysis similar to the one of equations (A-8), (A-10) in \cite{meno:10}, we derive the following lemma, which allows to control the linearization error. 
\begin{LEMME}
\label{LEMME_STAB}
Assume that \A{A} holds and that $0<\rho\le  \Lambda $ for some $\Lambda\le 1 $. Then, there exists $C_{\ref{LEMME_STAB}}:=C_{\ref{LEMME_STAB}}(\A{A})$ s.t. with the notation of \eqref{DEF_RESTES}:
\begin{eqnarray}
\label{LS1}
|\cR_{t,s}^\rho(\x,\y)|\le C_{\ref{LEMME_STAB}}(\rho^\eta 
+(t-s))|\tilde \x|. 
\end{eqnarray}
Also, if $\rho=\rho(t-s,\btheta_{t,s}(\x)-\y)\ge c_\infty \alpha=c_\infty \rho(\sigma-s,\bxi-\btheta_{\sigma,s}(\x)) $, then 
\begin{eqnarray}
\label{LS2}
|\cR_{t,\sigma}^\rho(\btheta_{\sigma,s}(\x),\y)|\le C_{\ref{LEMME_STAB}}( \rho^\eta 
+(t-\sigma))|\tilde \x|.
\end{eqnarray}
This implies that taking $j_0\in \leftB 1,n\rightB$ s.t. $|\tilde \x_{j_0}|:=\sup_{i\in \leftB 1,n \rightB}|\tilde \x_i|$ one has for all $j\in \leftB 1,n \rightB $:
\begin{eqnarray}
\label{CTR_R}
|\{\cR_{t,s}^\rho(\x,\y) \}_{j}|\le C_{\ref{LEMME_STAB}}n (
{\Lambda}^\eta+(t-s))|\tilde \x_{j_0}|,\nonumber\\
|\{\cR_{t,\sigma}^\rho(\btheta_{\sigma,s}(\x),\y)\}_{j}|\le C_{\ref{LEMME_STAB}}n (
{\Lambda}^\eta+(t-\sigma))|\tilde \x_{j_0}|.
\end{eqnarray}
Eventually, if $|\sigma-s|\le K|t-\sigma| $, we also have
\begin{equation}
\label{LE_CTR_QUI_SAUVE}
(t-\sigma)^{1/2}|\T_{t-\sigma}^{-1}(\tilde \btheta_{\sigma,s}^{t,\y}(\x) -\btheta_{\sigma,s}(\x))|\le C\{\rho^\eta+|\sigma-s| \}(t-\sigma)^{1/2}|\T_{t-\sigma}^{-1}(\btheta_{t,s}(\x)-\y)|.
\end{equation} 
\end{LEMME}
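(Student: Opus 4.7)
The plan is to reduce the lemma to a Gronwall-type analysis of the error between the true forward flow $\btheta_{\cdot,s}(\x)$ and the linearized flow $\tilde\btheta_{\cdot,s}^{t,\y}(\x)$ around $\btheta_{\cdot,t}(\y)$. Setting $\Delta_u := \btheta_{u,s}(\x) - \tilde\btheta_{u,s}^{t,\y}(\x)$, which vanishes at $u=s$, subtracting the two ODEs gives
$$\dot \Delta_u = E_u + D\gF(u,\btheta_{u,t}(\y))\,\Delta_u,$$
with the linearization residual
$$E_u := \gF(u,\btheta_{u,s}(\x)) - \gF(u,\btheta_{u,t}(\y)) - D\gF(u,\btheta_{u,t}(\y))\big(\btheta_{u,s}(\x)-\btheta_{u,t}(\y)\big).$$
Duhamel's formula then yields $\Delta_t = \int_s^t \tilde\gR^{t,\y}(t,u)E_u\,du$, and by Lemma~\ref{scaling_lemma} the resolvent is block-lower-triangular with $[\tilde\gR^{t,\y}(t,u)]_{ij}$ of order $(t-u)^{i-j}$ for $i\ge j$.

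Next I would bound $E_u$ componentwise. For $i=1$, only Lipschitz regularity of $\gF_1$ is used, giving $|E_{u,1}|\le C|\btheta_{u,s}(\x)-\btheta_{u,t}(\y)|$. For $i\ge 2$, I split
$$E_{u,i} = \underbrace{\gF_i(u,(\btheta_{u,s}(\x))_{i-1},(\btheta_{u,t}(\y))^{i,n}) - \gF_i(u,\btheta_{u,t}(\y)) - D_{\x_{i-1}}\gF_i(u,\btheta_{u,t}(\y))(\btheta_{u,s}(\x)-\btheta_{u,t}(\y))_{i-1}}_{\text{Hölder-Taylor in }\x_{i-1}} + \underbrace{\gF_i(u,\btheta_{u,s}(\x)) - \gF_i(u,(\btheta_{u,s}(\x))_{i-1},(\btheta_{u,t}(\y))^{i,n})}_{\text{Lipschitz in the upper block}},$$
which produces the bounds $C|(\btheta_{u,s}(\x)-\btheta_{u,t}(\y))_{i-1}|^{1+\eta}$ and $C\sum_{k\ge i}|(\btheta_{u,s}(\x)-\btheta_{u,t}(\y))_k|$ respectively, by \A{S}. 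Since $t-u\le t-s\le \rho^2$, Lemma~\ref{BIG_LEMME_FLOW} (applied with $w=\rho^2$) gives $|(\btheta_{u,s}(\x)-\btheta_{u,t}(\y))_k|\le C\rho^{2k-1}|\tilde\x_k|$, using $\y-\btheta_{t,s}(\x) = \rho^{-1}\T_{\rho^2}\tilde\x$.

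Plugging these into the Duhamel formula, integrating the $(t-u)^{i-j}$-kernel against each piece of $E_u$ over an interval of length $\le \rho^2$, and then applying the rescaling $\rho\,\T_{\rho^{-2}}$ to extract $\{\cR_{t,s}^\rho(\x,\y)\}_i$, the exponents collapse: the $j=1$ Lipschitz contribution produces a factor $(t-s)\cdot|\tilde\x|$ (since the time integration contributes the full $(t-s)$, while the spatial rescalings cancel using $\rho^{2k-2}\le 1$), the Hölder-Taylor piece for $j\ge 2$ produces $\rho^{(2j-3)\eta}|\tilde\x_{j-1}|^{1+\eta}\le \rho^\eta |\tilde\x|$ (using $|\tilde\x_{j-1}|\le 1$ on $\Sigma_1$), and the remaining Lipschitz piece for $j\ge 2$ gives a factor $\rho^{2(k-j+1)}\le\rho^2\le \rho^\eta$. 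Summing over $i,j$ yields \eqref{LS1}. Estimate \eqref{LS2} follows by repeating the argument on the interval $[\sigma,t]$: the hypothesis $\rho\ge c_\infty\alpha$ ensures $|\sigma-s|^{1/2}\le\rho/c_\infty$ and hence $t-\sigma\le C\rho^2$, so the same scaling controls apply. Bound \eqref{CTR_R} is then immediate using $|\cdot|_2\le \sqrt{n}|\cdot|_\infty$ and $\rho\le\Lambda$.

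For \eqref{LE_CTR_QUI_SAUVE} the same Duhamel identity, now stopped at $u=\sigma$, gives $\tilde\btheta_{\sigma,s}^{t,\y}(\x)-\btheta_{\sigma,s}(\x) = -\int_s^\sigma \tilde\gR^{t,\y}(\sigma,u)E_u\,du$; rescaling by $\T_{t-\sigma}$ (instead of $\T_{\rho^2}$) and using $|\sigma-s|\le K(t-\sigma)$ to invoke Lemma~\ref{scaling_lemma} with base $t-\sigma$, the $j=1$ piece contributes the $|\sigma-s|$ factor and the $j\ge 2$ pieces contribute the $\rho^\eta$ factor, while the right-hand factor $(t-\sigma)^{1/2}|\T_{t-\sigma}^{-1}(\btheta_{t,s}(\x)-\y)|$ plays the role of $|\tilde\x|$ in the previous analysis. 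The main technical difficulty throughout is bookkeeping the exponents: checking that in every regime the mixed homogeneity in space and time arranges itself so that the $(2j-3)\eta$ power on $\rho$ and the surplus $2(k-j+1)$ powers on $\rho$ both dominate the desired $\rho^\eta$ or $(t-s)$ bound, which relies crucially on the choice $\Lambda\le 1$ so that $\rho^2\le \rho^\eta$.
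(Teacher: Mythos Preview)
Your approach is essentially the same as the paper's: both decompose the linearization residual $E_u$ into the H\"older--Taylor piece (paper's $\cR^{\rho,3}$), the Lipschitz-in-upper-block piece (paper's $\cR^{\rho,1}$), and a linear part, then invoke Lemma~\ref{BIG_LEMME_FLOW} to control the intermediate scaled flow $\rho|\T_{\rho^{-2}}(\btheta_{u,s}(\x)-\btheta_{u,t}(\y))|$ by $|\tilde\x|$. The only organizational difference is that you resolve the linear part via Duhamel's formula and the block structure of $\tilde\gR^{t,\y}$, whereas the paper leaves it as $\cR^{\rho,2}$ and closes by Gronwall; one minor slip is that Lemma~\ref{BIG_LEMME_FLOW} yields $|(\btheta_{u,s}(\x)-\btheta_{u,t}(\y))_k|\le C\rho^{2k-1}|\tilde\x|$ with the full $|\tilde\x|$ rather than $|\tilde\x_k|$, but this does not affect your final bound.
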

\textit{Proof.} Let us prove \eqref{LS1}.
Recalling equations \eqref{DET_SYST}, \eqref{eq:F:240409:3} and \eqref{DEF_N}, \eqref{DEF_R}, we write: 
\begin{eqnarray} 
\cR_{t,s}^\rho(\x,\y):=\rho \T_{\rho^{-2}}\left\{\btheta_{t,s}(\x)-\tilde \btheta_{t,s}^{t,\y}(\x) \right\}=\nonumber\\ 
\rho \T_{\rho^{-2}}\left\{ \int_{s}^{t} du \biggl[\biggl( \gF(u,\btheta_{u,s}(\x))-\gF^{t,\y}(u,\btheta_{u,s}(\x)) \biggr)\right.\nonumber\\ 
\left.+\biggl(D\gF(u,\btheta_{u,t}(\y))(\btheta_{u,s}(\x)-\tilde \btheta_{u,s}^{t,\y}(\x) )\biggr) \right. \nonumber \\ 
\left. + \biggl(\bint{0}^{1} d\delta \left(  D\gF^{t,\y}(u,\btheta_{u,t}(\y)+\delta (\btheta_{u,s}(\x)-\btheta_{u,t}(\y) ) ) \right. \right.\nonumber\\ 
-\left. \left. D\gF^{t,\y}(u,\btheta_{u,t}(\y)) \right) (\btheta_{u,s}(\x)-\btheta_{u,t}(\y) )\biggr) \biggr] \right\}\nonumber \\ 
:=(\cR_{t,s}^{\rho,1}+\cR_{t,s}^{\rho,2}+\cR_{t,s}^{\rho,3})(\x,\y),
\label{D_EPS} 
\end{eqnarray} 
where, accordingly with the notations of \eqref{DEF_N}, for $(u,\z)\in [s,t]\times \R^{nd}, D\gF^{t,\y}(u,\z) $ is the $(nd)\times (nd)$ matrix with only non zero $d\times d $ matrix entries $(D\gF^{t,\y}(u,\z))_{j,j-1}:=D_{\x_{j-1}}\gF_j(u,\z_{j-1},\btheta_{u,t}(\y)^{j,n}) ,\  j\in \leftB 2,n\rightB $, so that in particular $D\gF^{t,\y}(u,\btheta_{u,t}(\y))=D\gF(u,\btheta_{u,t}(\y)) $. 
 
The structure of the ``partial gradient" $D\gF^{t,\y} $ and its H\"older property yield that there exists $C_3:=C_3(T,\A{A})$ s.t. for all $j\in \leftB 2,d \rightB $: 
\begin{eqnarray} 
|(\cR_{t,s}^{\rho,3}(\x,\y))_j| &\le & C_3 \rho^{1-2j}\int_{s}^{t} du|(\btheta_{u,s}(\x)- \btheta_{u,t}(\y))_{j-1}||\btheta_{u,s}(\x)-\btheta_{u,t}(\y)|^\eta
\nonumber \\ 
&\le & C_3|\btheta_{t,s}(\x)-\y|^\eta  \rho^{-2}\int_s^{t}du (\sum_{k=2}^n  \rho^{1-2(k-1)}|(\btheta_{u,s}(\x)- \btheta_{u,t}(\y))_{k-1}|)\nonumber. 
\end{eqnarray}
Since $\rho=\rho(t-s,\btheta_{t,s}(\x)-\y)=d\big((s,\x),(t,\y) \big)  \le \Lambda \le 1 $ we derive:
$$|\btheta_{t,s}(\x)-\y|\le C \sum_{i=1}^n |(\btheta_{t,s}(\x)-\y)_i|^{1/(2i-1)}\le C\rho,\ C:=C(n).$$
Hence, up to modifications of $C_3$,
\begin{eqnarray}
|(\cR_{t,s}^{\rho,3}(\x,\y))_j|&\le & C_3 \rho^\eta \times  \rho^{-2}\int_{s}^{t} du (\rho|\T_{\rho^{-2}}(\btheta_{u,s}(\x)- \btheta_{u,t} (\y))|)\nonumber \\ 
&\le & C_3  \rho^{-2+\eta}\int_{s}^{t} du (\rho|\T_{\rho^{-2}}(\btheta_{t,s}(\x)-\y)|)\nonumber \\ 
&\le & C_3 \rho^{1+\eta}|\T_{\rho^{-2}}(\btheta_{t,s}(\x)-\y)|, \label{CTR_D3} 
\end{eqnarray} 
using Lemma \ref{BIG_LEMME_FLOW} 
for the last but one inequality 
and recalling from \eqref{DEF_J1} that $|t-s|/\rho^2\le 1 $ for the last one.

On the other hand, the term $\cR_{t,s}^{\rho,1}(\x,\y)$ can be seen as a remainder w.r.t. the characteristic time scales. Precisely, there exists $C_1:=C_1(T,\A{A})$ (possibly changing from line to line) s.t. for all $j\in \leftB 1,n \rightB $: 
\begin{eqnarray} 
|(\cR_{t,s}^{\rho,1}(\x,\y))_j|&\le&C_1\rho^{1-2j}\int_s^{t}du \sum_{k=j}^n|(\btheta_{u,s}(\x)-\btheta_{u,t}(\y))_k|\nonumber \\ 
                                 &\le & C_1\int_{s}^{t} du \rho|\T_{\rho^{-2}}(\btheta_{u,s}(\x)-\btheta_{u,t}(\y))| \nonumber \\ 
                                 &\le & C_1 (t-s) \rho|\T_{\rho^{-2}}(\btheta_{t,s}(\x)-\y)|\label{CTR_D1} 
\end{eqnarray} 
using once again Lemma \ref{BIG_LEMME_FLOW} 
for the last inequality. 

Recall now that $\cR_{t,s}^{\rho,2}(\x,\y)$ is the linear part of equation \eqref{D_EPS}. Setting 
$$\forall u\in [s,t],\  \alpha_{t,u}^\rho(\y):=\left\{\rho\T_{\rho^{-2}} D\gF(u,\btheta_{u,t}(\y)) \rho^{-1}\T_{\rho^2}\right\},$$
  it can be rewritten  
\begin{eqnarray*} 
\cR_{t,s}^{\rho,2}(\x,\y)&=&\int_s^{t} du \left\{\rho\T_{\rho^{-2}} D\gF(u,\btheta_{u,t}(\y)) \rho^{-1}\T_{\rho^2}\right\}
\left(\rho\T_{\rho^{-2}}(\btheta_{u,s}(\x)-\tilde \btheta_{u,s}^{t,\y}(\x)) \right)\\  
&=&\int_s^{t} du \alpha_{t,u}^\rho(\y)\left(\rho\T_{\rho^{-2}} (\btheta_{u,s}(\x)-\tilde \btheta_{u,s}^{t,\y}(\x))\right) 
=\int_s^{t} du \alpha_{t,u}^\rho(\y)\cR_{u,s}^{\rho}(\x,\y),
\end{eqnarray*} 
where there exists a constant $ C_2:=C_2(T,\A{A})$ s.t. $\int_{s}^{t} du|\alpha_{t,u}^\rho(\y)|\le C_2$. 
From \eqref{CTR_D1}, \eqref{CTR_D3}, \eqref{D_EPS} and Gronwall's Lemma  we derive  
\begin{eqnarray*}
\exists C_4:=C_4(T,\A{A}),\ |\cR_{t,s}^{\rho}(\y)|&\le &C_4 (\rho^\eta+(t-s))\rho|\T_{\rho^{-2}}(\btheta_{t,s}(\x)-\y)|\\
&\le & C_4(\rho^\eta+(t-s)) |\tilde \x|
\end{eqnarray*} 
recalling \eqref{DEF_J1} for the last inequality. This gives equation \eqref{LS1} of the Lemma.
For \eqref{LS2}, the previous proof can be adapted with obvious modifications using thoroughly that $\btheta_{u,\sigma}(\btheta_{\sigma,s}(\x)):=\btheta_{u,s}(\x) $ and Lemma \ref{BIG_LEMME_FLOW}. The main differences are that the time integrals are taken between $\sigma $ and $t$. Following the computations leading to \eqref{CTR_D3}, the contribution $\cR_{t,\sigma}^{\rho,3}(\btheta_{\sigma,s}(\x),\y) $ would be bounded by 
$C_3\rho^{-2+\eta}|t-\sigma| \{\rho|\T_{\rho^{-2}}(\btheta_{t,s}(\x)-\y)|\}\le C_3\rho^{-2+\eta}(|t-s|+|s-\sigma|) \{\rho|\T_{\rho^{-2}}(\btheta_{t,s}(\x)-\y)|\}  $. Recalling also that $|t-s|+|\sigma-s|\le \rho^2+\alpha^2\le (1+c_\infty)\rho^2 $ on the considered set, we get that
\eqref{CTR_D3} still holds in that 
case. We would similarly have $|\cR_{t,\sigma}^{\rho,1}(\btheta_{\sigma,s}(\x),\y)|\le C_1 (t-\sigma)\rho|\T_{\rho^{-2}}(\btheta_{t,s}(\x)-\y)| $ giving \eqref{CTR_D1} in that case. Eventually, the same previous triangle inequality would give that, on the considered set $ \int_\sigma^t du|\alpha_{t,y}^\rho(\y)|\le C$, so that $\cR_{t,\sigma}^{\rho,2}(\btheta_{\sigma,s}(\x),\y) $ can still be viewed as the well controlled linear part of the inequality. The  proof then again follows from Gronwall's lemma. 

Eventually, \eqref{LE_CTR_QUI_SAUVE} is established similarly exploiting again Lemma \ref{BIG_LEMME_FLOW} and the condition $|\sigma-s|\le K|t-\sigma| $.
\qed
\subsubsection{Main Proof of Proposition
\ref{CZ_KER_SING}.}
Point \textit{i)} can be derived for both kernels $k_{ij}^{d},\ k_{ij}^{d,*} $ recalling from  \eqref{CTR_SING}  and Lemma \ref{BIG_LEMME_FLOW} that there exists $C:=C(\A{A},T)\ge 1$ s.t. $\forall ((s,\x),(t,\y))\in S^2 $, \label{PREUVE_PI}
\begin{eqnarray}
|k_{i,j}^{d}(s,t,\x,\y)|+|k_{i,j}^{d,*}(s,t,\x,\y)|\nonumber \\
\le \frac{C}{(t-s)^{1+n^2d/2}}\exp\left(-C^{-1}(t-s)|\T_{t-s}^{-1}(\btheta_{t,s}(\x)-\y)^2| \right).\label{kerns-eq-fl}
\end{eqnarray}
Now for a given $c_1>2 $, if $ c_1|t-s|^{1/2}> {\mathbf d}((s,\x),(t,\y))$ then the r.h.s of \eqref{kerns-eq-fl} can directly be upper bounded by  $Cc_1^{2+n^2d}/{\mathbf d}((s,\x),(t,\y))^{2+n^2d} $. On the other hand, if $c_1|t-s|^{1/2}\le {\mathbf d}((s,\x),(t,\y)) $ then, by definition of  $d $ in \eqref{distance} we derive that $\exists i\in \leftB 1,n\rightB $ s.t.
$$|(\btheta_{t,s}(\x)-\y)_i|^{1/(2i-1)}\ge \frac{1}{n}(1-\frac1{c_1}){\mathbf d}((s,\x),(t,\y)).$$
This property yields:
\begin{eqnarray*}
|k_{i,j}^{d}(s,t,\x,\y)|+|k_{i,j}^{d,*}(s,t,\x,\y)|
\le \frac{C}{|(\btheta_{t,s}(\x)-\y)_i|^{\frac{n^2d+2}{2i-1}}}\left( \frac{|(\btheta_{t,s}(\x)-\y)_i|}{|t-s|^{1/2(2i-1)}}\right)^{\frac{n^2 d+2}{2i-1}}\\
\times \exp\left(-C^{-1}(t-s)|\T_{t-s}^{-1}(\btheta_{t,s}(\x)-\y)^2| \right)\\
\le  \frac{\tilde C}{{\mathbf d}((s,\x),(t,\y))^{n^2 d+2}}\exp\left(-\bar C^{-1}(t-s)|\T_{t-s}^{-1}(\btheta_{t,s}(\x)-\y)^2| \right), 
\end{eqnarray*}
where $ \tilde C:=\tilde C(\A{A},T,c_1),\ \bar C:=\bar C(\A{A},T)$. This gives the first claim.\\

Let us now establish point  \textit{ii)} for the kernel $k_{i,j}^{d} $, recalling that  below 
\begin{equation}
\label{COND_CZ_LOIN}
c_\infty {\mathbf d}((s,\x),(\sigma,\bxi)) \le {\mathbf d}((s,\x),(t,\y)) \le \Lambda\le 1,
\end{equation}
for $ c_\infty:= c_\infty(\A{A})$ large enough and $\Lambda$ small enough to be specified later on. We can w.l.o.g. assume that $|\sigma-s|\le K|t-\sigma|$, for some $K:=K(\A{A},T) >1$ and write:
\begin{eqnarray} 
\label{DECOUP_INTEGRAND} 
|k_{ij}(s,t, \x,\y)-k_{ij}(\sigma,t, \bxi,\y)|\le |k_{ij}(s,t, \x,\y)-k_{ij}(\sigma,t, \tilde \btheta_{\sigma,s}^{t,\y}(\x),\y)|\nonumber \\
+|k_{ij}(\sigma,t, \tilde \btheta_{\sigma,s}^{t,\y}(\x),\y)-k_{ij}(\sigma,t, \bxi,\y)|\nonumber\\
 :=I_1(s,\sigma,t,\x,\y)+I_2(\sigma,t,\x,\bxi,\y).
\end{eqnarray}
\begin{REM}
\label{DECOUP_ALTERNATIVE}
The previous splitting of $|k_{ij}(s,t, \x,\y)-k_{ij}(\sigma,t, \bxi,\y)| $ has been done to separate the time and space sensitivities.  In $I_1 $ the space variable is frozen and from \eqref{reg_kernel} and the flow property of $\tilde \btheta^{t,\y} $ its value is equal to $\tilde \btheta_{t,s}^{t,\y}(\x)-\y $. In $I_2 $ the time variables are equal to $t-\sigma $. Also, the intermediate spatial point $\tilde \btheta_{\sigma,s}^{t,\y}(\x)$ yields from $I_2$ a difference of the form $|\tilde \btheta_{\sigma,s}^{t,\y}(\x)-\bxi| $ which up to a linearization error has the same order as $|\btheta_{\sigma,s}(\x)-\bxi| $, norm of the spatial point appearing in \eqref{COND_CZ_LOIN}. The condition $|\sigma-s|\le K|t-\sigma| $ is here needed to use properties on the rescaled flows for the spatial sensitivity (see Lemma \ref{BIG_LEMME_FLOW} and equation \eqref{CTR_SPATIAL_FLOW}) which allow to control the linearization error.
We emphasize that if $|\sigma-s|\ge K|t-\sigma| $ (and therefore $|t-s|\ge (1-1/K) |\sigma-s|$) then the integrand has to be split differently, writing
\begin{eqnarray*}
|k_{ij}(s,t, \x,\y)-k_{ij}(\sigma,t, \bxi,\y)|\le |k_{ij}(s,t, \x,\y)-k_{ij}(s,t, \tilde \btheta_{s,\sigma}^{t,\y}(\bxi),\y)|\nonumber \\
+|k_{ij}(s,t, \tilde \btheta_{s,\sigma}^{t,\y}(\bxi),\y)-k_{ij}(\sigma,t, \bxi,\y)|.
\end{eqnarray*}
The above terms could be analyzed similarly to those appearing in \eqref{DECOUP_INTEGRAND} following the procedure below. 
\end{REM}


 Setting for all $-T\le s<t\le T,\ (\z,\y)\in (\R^{nd})^2 $:
\begin{eqnarray*}
\bar k_{ij}(s,t,\z,\y)&:=& \I_{-T\le s<t\le T} \left(- [\tilde \gR^{t,\y}(t,s)^*\tilde \K^\y(s,t)^{-1}\tilde \gR^{t,\y}(t,s)]_{11}\right. \nonumber\\
&&\left.+[\tilde \gR^{t,\y}(t,s)^*\tilde \K^\y(s,t)^{-1}(\z-\y)  ]_1^{\otimes 2}\right) \times \biggl(\frac{1}{(2\pi)^{nd}\det(\tilde \K^\y(s,t))^{1/2}}\\
&&\times \exp(-\frac12 \langle \tilde \K^\y(s,t)^{-1}(\z-\y),\z-\y\rangle) \biggr), 
\end{eqnarray*}
we can rewrite $I_1(s,\sigma,t,\x,\y)=|\bar k_{ij}(s,t,\tilde \btheta_{t,s}^{t,\y}(\x),\y)-\bar k_{ij}(\sigma,t,\tilde \btheta_{t,s}^{t,\y}(\x),\y)| $. Thus,
from \eqref{reg_kernel}, we derive similarly to  \eqref{CTR_SING} (see also the proof of Proposition 3.7 in \cite{dela:meno:10} for a thorough discussion on the time sensitivities of the covariance matrix) that $\exists (c,C):=(c,C)(T,\A{A})>0 $ s.t.
\begin{eqnarray}
\label{SING_I1}
I_1(s,\sigma,t,\x,\y) &\le& |s-\sigma|\sup_{\tau\in [s\wedge \sigma,(s\vee \sigma)\wedge t[}| \partial_\tau \bar k_{ij}(\tau,t, \z,\y)|_{\z=\tilde \btheta_{t,s}^{t,\y}(\x)}\nonumber\\
&\le &C |s-\sigma |\times \sup_{\tau\in [s\wedge  \sigma ,(s\vee \sigma) \wedge t[}\frac{\bar q_c(\tau,t,\z,\y)}{(t-\tau)^2}|_{\z=\tilde \btheta_{t,s}^{t,\y}(\x)},
\end{eqnarray}
where $\bar q_c(\tau,t,\z,\y):=\frac{c^{nd/2}}{(2\pi)^{nd/2}(t-\tau)^{n^2d/2}}\exp(-\frac c2 (t-\tau)|\T_{t-\tau}^{-1}(\z-\y)|^2) $.

We have to consider the terms $I_1,I_2$ under the condition \eqref{COND_CZ_LOIN} that rewrites $\{(t,\y)\in [0,T]\times \R^{nd}: 
\rho=\rho(t-s,\by- \btheta_{t,s}(\x)) \ge \my_c \alpha=\my_c\rho(\sigma-s,\bxi-\btheta_{\sigma,s}(\x)) \} $. 

From 
\eqref{DEF_J1} we get that for all $\tau\in [s\wedge  \sigma ,(s\vee \sigma) \wedge t[$:
\begin{eqnarray}
\label{THE_SING_TIME}
\frac{\bar q_c(\tau,t, \tilde \btheta_{t,s}^{t,\y}(\x),\y)}{(t-\tau)^2}&\le& \frac{C}{\rho^{4+n^2d}}\frac{1}{\left(\tilde s-\frac{\tau-s}{\rho^2}\right)^{2+n^2d/2}}\nonumber\\
&&\times\exp\left(-c(t-\tau)|\T_{t-\tau}^{-1}(\rho^{-1}\T_{\rho^2}\{\tilde \x+\cR_{t,s}^\rho(\x,\y) \})|^2 \right), 
\end{eqnarray}
using the notation introduced in \eqref{DEF_RESTES},
i.e. the term $ \cR_{t,s}^\rho(\x,\y)$ measures the difference associated with the approximation of the non-linear flow by the linear one.

Now, if $|\tilde s-\frac{\tau-s}{\rho^2}|^{1/2}\ge \frac{1}{\my_c}=:\tilde c$, for $\my_c\ge 1 $ to be specified later on, we have from 
\eqref{THE_SING_TIME} and \eqref{SING_I1} that $I_1(s,\sigma,t,\x,\y)\le \frac{C|s-\sigma|}{\rho^{4+n^2d}}\le \frac{C\alpha^2}{\rho^{4+n^2d}}$ using \eqref{DEF_J2} for the last inequality. 
On the other hand,  from  \eqref{COND_CZ_LOIN} $\rho\ge \my_c\alpha $.  Hence, $|\tau-s|\le |\sigma-s|\le \alpha^2\le \frac{\rho^2}{\my_c^2} $. Since $(\tilde s,\tilde \x)\in \Sigma_1 $,  we thus 
 derive:
\begin{eqnarray*}
 \sum_{j=1}^{n}|\tilde \x_j|^{1/(2j-1)}+\left|\tilde s-\frac{\tau-s}{\rho^2}\right|^{1/2}=  1- \left| \frac{(\tau-s)}{\rho^2} \right|^{1/2}
 \ge 1-\frac{1}{\my_c}=1-\tilde c.
 \end{eqnarray*}
Hence, for $|\tilde s-\frac{\tau-s}{\rho^2}|^{1/2}\le  \tilde c $, we obtain 
\begin{eqnarray}
\label{CTR_GRD}
 \sum_{j=1}^{n}|\tilde \x_j|^{1/(2j-1)}\ge 1-2\tilde c\  {\rm and  }\  \exists j_0\in \leftB 1,n \rightB,\ |\tilde \x_{j_0}|^{1/(2j_0-1)}\ge \frac{1-2\tilde c}{n}>0,\ {\rm for \ } \my_c> 2 .\nonumber\\ 
 \end{eqnarray}
Write now:
\begin{eqnarray*}
(t-\tau)|\T_{t-\tau}^{-1}(\rho^{-1}\T_{\rho^2}\{ \tilde \x+ \cR_{t,s}^\rho(\x,\y)\})|^2 
=\bsum{j=1}^{n} \left(\tilde s-\frac{\tau-s}{\rho^2} \right)^{-(2j-1)}|\{\tilde\x+ \cR_{t,s}^\rho(\x,\y) \}_j|^2. 
\end{eqnarray*}

 Thus, we get from Lemma \ref{LEMME_STAB}, equation \eqref{CTR_R}, that for $T$ and $\Lambda $ s.t. $ C_{\ref{LEMME_STAB}}n ({\Lambda}^\eta+(t-s)) \le 1/2$:
\begin{eqnarray*}
&&(t-\tau)|\T_{t-\tau}^{-1}(\rho^{-1}\T_{\rho^2}\{ \tilde \x+ \cR_{t,s}^\rho(\x,\y)\})|^2\\
&&\ge   \left(\tilde s-\frac{\tau-s}{\rho^2} \right)^{-(2j_0-1)}|\tilde \x_{j_0}|^2\left(\frac 12-(C_{\ref{LEMME_STAB}} n(\Lambda^\eta+(t-s)))^2   \right)\\
&&\ge \frac14 \left(\tilde s-\frac{\tau-s}{\rho^2} \right)^{-(2j_0-1)}\left(\frac{1-2\tilde c}{n} \right)^{2(2j_0-1)},
\end{eqnarray*}
using \eqref{CTR_GRD} for the last inequality.
Plugging the above control into \eqref{THE_SING_TIME} yields:
\begin{eqnarray}
\frac{\bar q_c(\tau,t, \tilde \btheta_{t,s}^{t,\y}(\x),\y)}{(t-\tau)^2}\le \frac{C}{\rho^{4+n^2d}}\frac{1}{\left(\tilde s-\frac{\tau-s}{\rho^2}\right)^{2+n^2d/2}}\nonumber\\
\times \exp\left(-\frac{c}4\left(\tilde s-\frac{\tau-s}{\rho^2} \right)^{-(2j_0-1)}\left(\frac{1-2\tilde c}{n} \right)^{2(2j_0-1)}\right)\le \frac{C}{\rho^{4+n^2d}}.\label{CTR_PART_I1}
\end{eqnarray}
From \eqref{CTR_PART_I1} and \eqref{SING_I1} we finally get the global bound:
\begin{equation}
\label{CTR_I1}
\exists C_1:=C_1(T,\A{A})>0, \ I_1(s,\sigma,t,\x,\y) \le \frac{C_1\alpha^2}{\rho^{4+n^2d}}.
\end{equation}

\label{P25}
Let us now turn to $I_2(\sigma,t,\x,\bxi,\y)
$. 
From Proposition \ref{CTR_DENSITY} and  \eqref{reg_kernel}, we get similarly to \eqref{SING_I1} that $\exists (c,C):=(c,C)(T,\A{A})>0 $ s.t.: 
\begin{eqnarray}
I_2(\sigma,t,\x,\bxi,\y) &\le &C (t-\sigma)^{1/2}|\T_{t-\sigma}^{-1}(\tilde \btheta_{t,s}^{t,\y}(\x)-\tilde \btheta_{t,\sigma}^{t,\y}(\bxi))|\nonumber\\
&& \times\frac{1}{(t-\sigma)}\sup_{\gamma\in [0,1] }\bar q_c(\sigma,t,\gamma\tilde \btheta_{t,s}^{t,\y}(\x)+(1-\gamma)\tilde \btheta_{t,\sigma}^{t,\y}(\bxi),\y)\nonumber\\
&\le&C (t-\sigma)^{1/2}|\T_{t-\sigma}^{-1}( \tilde \btheta_{\sigma,s}^{t,\y}(\x)-\bxi)|\nonumber\\
&&\times \frac{1}{(t-\sigma)}\sup_{\gamma\in [0,1] }\bar q_c(\sigma,t,\gamma\tilde \btheta_{t,s}^{t,\y}(\x)+(1-\gamma)\tilde \btheta_{t,\sigma}^{t,\y}(\bxi),\y)\nonumber\\
&\le&C (t-\sigma)^{1/2}\{|\T_{t-\sigma}^{-1}( \btheta_{\sigma,s}(\x)-\bxi)|+|\T_{t-\sigma}^{-1}( \tilde \btheta_{\sigma,s}^{t,\y}(\x) -\btheta_{\sigma,s}(\x))|\}\nonumber\\
&&\times \frac{1}{(t-\sigma)}\sup_{\gamma\in [0,1] }\bar q_c(\sigma,t,\gamma\tilde \btheta_{t,s}^{t,\y}(\x)+(1-\gamma)\tilde \btheta_{t,\sigma}^{t,\y}(\bxi),\y),
\label{CTR_SPATIAL_FLOW}
\end{eqnarray}
where the last but one inequality is derived similarly to the first statement in Lemma \ref{BIG_LEMME_FLOW} using the dynamics \eqref{eq:F:240409:3} associated with \eqref{AFFINE}. 
Plugging  equation \eqref{LE_CTR_QUI_SAUVE} from Lemma \ref{LEMME_STAB} into \eqref{CTR_SPATIAL_FLOW} now yields:
\begin{eqnarray}
I_2(\sigma,t,\x,\bxi,\y)\nonumber\\
\le  C\biggl\{ \bsum{k=1}^{n}\frac{|(\btheta_{\sigma,s}(\x)-\bxi)_k|+(\rho^\eta+|s-\sigma|)|(\btheta_{t,s}(\x)-\y)_k|}{\rho^{2+(2k-1)+n^2d}}\frac{1}{\left(\tilde s-\frac{\sigma-s}{\rho^2}\right)^{\frac{2+(2k-1)+n^2d}{2}}  } \biggr\}\nonumber\\
\times \sup_{\gamma\in [0,1]}\exp\left(-c(t-\sigma)\left|\T_{t-\sigma}^{-1}\biggl( \gamma \tilde  \btheta_{t,s}^{t,\y}(\x)+(1-\gamma)\tilde \btheta_{t,\sigma}^{t,\y}(\bxi)  -\y\biggr)\right|^2 \right).\label{BIG_BD_I2}
\end{eqnarray}
Thus, if $|\tilde s-\frac{\sigma-s}{\rho^2}|^{1/2}\ge \tilde c$ , we get recalling \eqref{COND_CZ_LOIN}, \eqref{DEF_J1}, \eqref{DEF_J2}:
\begin{equation*}
\begin{split}
I_2(\sigma,t,\x,\bxi,\y)\le C\Bigl\{\sum_{k=1}^{n} \frac{|(\btheta_{\sigma,s}(\x)-\bxi)_k|}{\rho^{2+(2k-1)+n^2d}}+\frac{1}{\rho^{2-\eta+n^2d}}\Bigr\}\\
\le C\Bigl\{\sum_{k=1}^{n}\frac{\alpha^{2k-1}}{\rho^{2+(2k-1)+n^2d}}+\frac{1}{\rho^{2-\eta+n^2d}} \Bigr\},
\end{split}
\end{equation*}
using \eqref{DEF_J2} for the last inequality.
Recall now from \eqref{DEF_J1}, \eqref{DEF_J2} and \eqref{DEF_RESTES} that 
\begin{eqnarray}\bigl(\gamma \tilde \btheta_{t,s}^{t,\y}(\x) +(1-\gamma)\tilde \btheta_{t,\sigma}^{t,\y}(\bxi)\bigr)-\y\nonumber\\
= 
\gamma 
\tilde \btheta_{t,s}^{t,\y}(\x)
+
(1-\gamma)\tilde \btheta_{t,\sigma}^{t,\y}(\btheta_{\sigma,s}(\x)+\alpha^{-1}\T_{\alpha^2}\bar \x)-\y\nonumber\\
\overset{\eqref{AFFINE}}{=} 
\gamma 
\tilde \btheta_{t,s}^{t,\y}(\x)
+(1-\gamma)\biggl\{\tilde \btheta_{t,\sigma}^{t,\y}(\btheta_{\sigma,s}(\x))+\tilde \gR^{t,\y}(t,\sigma)\alpha^{-1}\T_{\alpha^2}\bar \x\biggr\}-\y \nonumber\\
=\btheta_{t,s}(\x)-\y+\gamma (\tilde \btheta_{t,s}^{t,\y}(\x)-\btheta_{t,s}(\x)) +(1-\gamma)(\tilde \btheta_{t,\sigma}^{t,\y}(\btheta_{\sigma,s}(\x))-\btheta_{t,s}(\x))\nonumber\\
+(1-\gamma)\tilde \gR^{t,\y}(t,\sigma)\alpha^{-1}\T_{\alpha^2}\bar \x\nonumber\\
:=-\rho^{-1}\T_{\rho^2}\{\tilde \x+\gamma \cR_{t,s}^\rho(\x,\y)+(1-\gamma)\cR_{t,\sigma}^\rho(\btheta_{\sigma,s}(\x),\y)\}+(1-\gamma)\tilde \gR^{t,\y}(t,\sigma)\alpha^{-1}\T_{\alpha^2}\bar \x,\nonumber\\
\label{I}
\end{eqnarray} 
where $(\bar s, \bar \x)\in \Sigma_1 $. Observe that, from \eqref{CTR_R} in Lemma \ref{LEMME_STAB} we have for all $j\in \leftB 1,n\rightB $,
\begin{eqnarray}
\label{II}
| (\cR_{t,s}^\rho(\x,\y))_j|+|(\cR_{t,\sigma}^\rho(\btheta_{\sigma,s}(\x),\y))_j|\le C_{\ref{LEMME_STAB}}n(2
{\Lambda}^\eta+(t-\sigma)+(t-s))|\tilde \x_{j_0}|.\nonumber\\
\end{eqnarray}
On the other hand, from the scaling Lemma \ref{scaling_lemma} we obtain that 
\begin{equation*}
(t-\sigma)^{1/2}\T_{t-\sigma}^{-1}\tilde \gR^{t,\y}(t,\sigma) \alpha^{-1}\T_{\alpha^2}\bar \x={\bar  \gR}^{\sigma,t,(t,\y)}_1 (t-\sigma)^{1/2}\T_{t-\sigma}^{-1}\alpha^{-1}\T_{\alpha^2}\bar \x
\end{equation*}
with $| {\bar  \gR}^{\sigma,t,(t,\y)}_1|\le \hat C:=\hat C(T,\A{A}) $. Thus, recalling that from the structure of the linearized system the resolvent is subdiagonal (see \eqref{eq:F:240409:3}, \eqref{AFFINE}), we derive for all $j\in \leftB1,n\rightB$:
\begin{eqnarray}
(t-\sigma)^{1/2}|(\T_{t-\sigma}^{-1}\tilde \gR^{t,\y}(t,\sigma) \alpha^{-1}\T_{\alpha^2}\bar \x)_j|=(t-\sigma)^{1/2}|({\bar  \gR}^{\sigma,t,(t,\y)}_1 \T_{t-\sigma}^{-1}\alpha^{-1}\T_{\alpha^2}\bar \x)_j|\nonumber\\
\le  \hat C \bsum{i=1}^{j}\left( \frac{\alpha^2}{t-\sigma}\right)^{i-1/2}|\bar \x_i|
\le \hat C \sum_{i=1}^j\left(\frac{\alpha^2}{\rho^2\{\tilde s-\frac{\sigma-s}{\rho^2}\}}\right)^{i-1/2}|\bar \x|\nonumber\\
\le \hat C \left(\tilde s-\frac{\sigma-s}{\rho^2} \right)^{-(j-1/2)} \sum_{i=1}^j\left(\frac{\alpha^2}{\rho^2}\right)^{i-1/2}|\bar \x|\label{III}
\end{eqnarray}
as soon as $\tilde c\le 1 $ and $|\tilde s-\frac{\sigma-s}{\rho^2}|^{1/2}\le \tilde c $. In that case, using \eqref{CTR_GRD}, \eqref{I}, \eqref{II}, \eqref{III} we then derive that: 
\begin{eqnarray*}
(t-\sigma)|\T_{t-\sigma}^{-1}(\gamma \tilde \btheta_{t,s}^{t,\y}(\x)+(1-\gamma)\tilde \btheta_{t,\sigma}^{t,\y}(\bxi)-\y)|^2
\\
\ge   \left(\tilde s-\frac{\sigma-s}{\rho^2} \right)^{-(2j_0-1)}\biggl(\frac 12 |\tilde \x_{j_0}|^2- 2\biggl\{(|(\cR_{t,s}^\rho(\x,\y))_{j_0}|+|(\cR_{t,\sigma}^\rho(\btheta_{\sigma,s}(\x),\y))_{j_0}|)^2\\
+ \hat C^2\left\{\sum_{i=1}^{j_0}\left(\frac{\alpha^2}{\rho^2}\right)^{i-1/2}\right\}^2 |\bar \x|^2 \biggr\} \biggr)\\ 
\ge  \left(\tilde s-\frac{\sigma-s}{\rho^2} \right)^{-(2j_0-1)}\left( \frac 12|\tilde \x_{j_0}|^2-2n^2[C_{\ref{LEMME_STAB}}^2(2 
{\Lambda^\eta}+2(t-\sigma\wedge s))^2 +\hat C^2\my_c^{-2}]  \right) 
\\
\ge  \left(\tilde s-\frac{\sigma-s}{\rho^2} \right)^{-(2j_0-1)}\bar c, 
\end{eqnarray*}
where $\bar c>0$ for $T,\Lambda $ small enough and a sufficiently large $\my_c$. 
Plugging this last inequality in \eqref{BIG_BD_I2}, we thus obtain the global bound:
\begin{equation}
\label{CTR_I2}
\exists C_2:=C_2(T,\A{A})>0, \ I_2(\sigma,t,\x,\bxi,\y) \le C_2\left( \sum_{k=1}^{n}\frac{\alpha^{2k-1}}{\rho^{2+(2k-1)+n^2d}}+\frac{1}{\rho^{2-\eta+n^2d}}\right).
\end{equation}
Plugging \eqref{CTR_I1}, \eqref{CTR_I2} into \eqref{DECOUP_INTEGRAND}, and recalling as well that \eqref{COND_CZ_LOIN} holds, gives the point.

Let us now turn to the estimates concerning the \textit{adjoint} kernel $k_{i,j}^{d,*} $. Some additional contributions need to be taken into account. Namely, when investigating the difference
\begin{eqnarray*}
D_{i,j}^{d}((s,\x),(\sigma,\bxi),(t,\y))&:=&k_{i,j}^{d,*}(s,t,\x,\y)-k_{i,j}^{d,*}(\sigma,t,\bxi,\y)\\
&=&k_{i,j}^{d}(t,s,\y,\x)-k_{i,j}^{d}(t,\sigma,\y,\bxi),
\end{eqnarray*}
 we are led to consider the linearized systems $\tilde \btheta_{s,t}^{s,\x}(\y), \tilde \btheta_{\sigma,t}^{\sigma,\bxi}(\y)$. 
Define now  for all $(s,t,u,\x)\in [-T,T]^3\times \R^{nd} $ : 
 \begin{eqnarray}
\label{DEF_H}
\tilde \H^{s,\x}(t,u)& :=\tilde \gR^{s,\x}(t,s) \tilde  \K^{s,\x}(t,u) \tilde \gR^{s,\x}(t,s)^*,\nonumber\\
\tilde  \K^{s,\x}(t,u) &:=\int_t^u \tilde \gR^{s,\x}(s,v) B \varsigma(v)B^*[\tilde \gR^{s,\x}(s,v)]^* dv.
\end{eqnarray}
\begin{REM}
\label{RQ_SUR_DET_HK}
Let us note that for $u=s$ we have $\tilde \K^{s,\x}(t,s)=\tilde \K^\x(t,s) $ introduced after \eqref{DEF_KERN}.
Observe also, from the above definition and the specific structure of the resolvent (see equations \eqref{eq:F:240409:3}-\eqref{AFFINE}) that we actually have ${\rm det}(\tilde \gR^{s,\x}(t,s))=1 $ and therefore ${\rm det} (\tilde \H^{s,\x}(t,u))={\rm det} (\tilde  \K^{s,\x}(t,u))$.
\end{REM}
From the definition in \eqref{DEF_H}  and rewriting  \eqref{PULL_BACK} in the current variables, the exponential bounds write:
 \begin{eqnarray*}
\langle \tilde \K^{\x}(t,s)^{-1} (\tilde \btheta_{s,t}^{s,\x}(\y)-\x),\tilde \btheta_{s,t}^{s,\x}(\y)-\x \rangle\\
=\langle \tilde \gR^{s,\x}(s,t)^*\tilde \K^{\x}(t,s)^{-1} \tilde \gR^{s,\x}(s,t)(\y-\btheta_{t,s}(\x)),\y-\btheta_{t,s}(\x) \rangle\\
=\langle \tilde \H^{s,\x}(t,s)^{-1}(\y-\btheta_{t,s}(\x)),\y-\btheta_{t,s}(\x)\rangle,\\
\langle \tilde \K^{\bxi}(t,\sigma)^{-1} (\tilde \btheta_{\sigma,t}^{\sigma,\bxi}(\y)-\bxi),\tilde \btheta_{\sigma,t}^{\sigma,\bxi}(\y)-\bxi \rangle
=\langle \tilde \H^{\sigma,\bxi}(t,\sigma)^{-1}(\y-\btheta_{t,\sigma}(\bxi)),\y-\btheta_{t,\sigma}(\bxi)\rangle.
\end{eqnarray*}
Introducing, for all $(s,t,u,\x)\in [-T,T]^3\times \R^{nd},\ \z\in \R^{nd} $,
\begin{eqnarray}
\label{DEF_CHECK_K}
\check k^{s,\x,u,t}(\z):=\I_{s-t>0}\left\{ -[\tilde \H^{s,\x}(t,u)^{-1}]_{1,1}+[\tilde \H^{s,\x}(t,u)^{-1} \z]_1^{\otimes 2}\right\}\nonumber\\
\frac{1}{(2\pi)^{nd/2}\det(\tilde \H^{s,\x}(t,u))^{1/2}}\times \exp\left(-\frac 12 \langle \tilde \H^{s,\x}(t,u)^{-1}\z,\z\rangle \right),
\end{eqnarray}
we can rewrite:
\begin{eqnarray*}
|D_{i,j}^{d}((s,\x),(\sigma,\bxi),(t,\y))|=|\check k_{i,j}^{s,\x,s,t}(\y-\btheta_{t,s}(\x))-\check k_{i,j}^{\sigma,\bxi,\sigma,t}(\y-\btheta_{t,\sigma}(\bxi))|\\
\le |\check k_{i,j}^{s,\x,s,t}(\y-\btheta_{t,s}(\x))-\check k_{i,j}^{s,\x,\sigma,t}(\y-\btheta_{t,s}(\x))|\\
+ |\check k_{i,j}^{s,\x,\sigma,t}(\y-\btheta_{t,s}(\x))-\check k_{i,j}^{\sigma,\bxi,\sigma,t}(\y-\btheta_{t,s}(\x))|\\
+|\check k_{i,j}^{\sigma,\bxi,\sigma,t}(\y-\btheta_{t,s}(\x))-\check k_{i,j}^{\sigma,\bxi,\sigma,t}(\y-\btheta_{t,\sigma}(\bxi))|=:\sum_{l=1}^3 |\{D_{i,j}^{d}((s,\x),(\sigma,\bxi),(t,\y))\}_l|.
\end{eqnarray*}
Now the terms $|\{D_{i,j}^{d}((s,\x),(\sigma,\bxi),(t,\y))\}_1|$ and $|\{D_{i,j}^{d}((s,\x),(\sigma,\bxi),(t,\y))\}_3| $ respectively involve time and space sensitivities when the freezing parameters in the covariance matrix are fixed. 
Those contributions can therefore be investigated as terms $I_1 $ and $I_2 $ in \eqref{CTR_I1}, \eqref{CTR_I2}. Once again, the previous splitting is associated w.l.o.g. to the case $|\sigma-s|\le K|t-\sigma|$, see also Remark \ref{DECOUP_ALTERNATIVE}. The term $ |\{D_{i,j}^{d}((s,\x),(\sigma,\bxi),(t,\y))\}_2|$ involves two different covariance matrices observed at the same time but that are respectively associated with the freezing points $(s,\x) $ and $(\sigma,\bxi) $ in the linearization of \eqref{DET_SYST}. To analyze this difference we proceed as in the proof of Lemma 2.4 in \cite{meno:10}. Namely, using \eqref{DEF_H} and the Scaling Lemma \ref{scaling_lemma}, we rewrite:
\begin{eqnarray*}
 \tilde \H^{s,\x}(t,\sigma) &=&\int_{t}^\sigma \tilde \gR^{s,\x}(t,u) B\varsigma (u)B^*[\tilde \gR^{s,\x}(t,u)]^* du\\
                                       &=&\T_{\sigma-t}\int_t^\sigma \bar \gR^{t,\sigma, (s,\x)}_{\frac{t-u}{\sigma-t}} \T_{\sigma-t}^{-1}B\varsigma(u)B^*\T_{\sigma-t}^{-1}[\bar \gR^{t,\sigma,(s,\x)}_{\frac{t-u}{\sigma-t}}]^* du\T_{\sigma-t}\\
                                                                              &=&(\sigma-t)^{-1}\T_{\sigma-t}\left[\frac{1}{\sigma-t}\int_t^\sigma \bar \gR^{t,\sigma,(s,\x)}_{\frac{t-u}{\sigma-t}} B\varsigma(u)B^*[\bar \gR^{t,\sigma,(s,\x)}_{\frac{t-u}{\sigma-t}}]^*du\right]\T_{\sigma-t}.
\end{eqnarray*}
Defining,
\begin{eqnarray}
\bar {\H}^{t,\sigma,(s,\x)}_1:=\left[\frac{1}{\sigma-t}\int_t^\sigma \bar \gR^{t,\sigma, (s,\x)}_{\frac{t-u}{\sigma-t}} B\varsigma(u)B^*[\bar \gR^{t,\sigma,(s,\x)}_{\frac{t-u}{\sigma-t}}]^*du\right],\label{DEF_BARH}
\end{eqnarray} 
yields:                                                                    
\begin{eqnarray*}
 \tilde \H^{s,\x}(t,\sigma) &=& (\sigma-t)^{-1}\T_{\sigma-t}\bar {\H}^{t,\sigma,(s,\x)}_1\T_{\sigma-t}.
\end{eqnarray*}                                                                             
Observe now from Lemma \ref{scaling_lemma} and the non degeneracy assumption on $c$ in \A{A}, that $\bar \H^{t,\sigma,(s,\x)} $ is a bounded uniformly elliptic matrix of $\R^{nd}\otimes \R^{nd} $. 
Similarly,
 \begin{eqnarray}                                                                             
 \tilde \H^{\sigma,\bxi}(t,\sigma)&=&(\sigma-t)^{-1}\T_{\sigma-t}\left[\frac{1}{\sigma-t}\int_t^\sigma \bar \gR^{t,\sigma, (\sigma,\bxi)}_{\frac{t-u}{\sigma-t}} B\varsigma(u)B^*[\bar \gR^{t,\sigma,(\sigma,\bxi)}_{\frac{t-u}{\sigma-t}}]^*du\right]\T_{\sigma-t}\nonumber \\
&=:&(\sigma-t)^{-1}\T_{\sigma-t} {\bar \H}^{t,\sigma, (\sigma,\bxi)}_1\T_{\sigma-t},\label{DEF_BARH_2}
 \end{eqnarray}
where $ {\bar \H}^{t,\sigma,(\sigma,\bxi)}_1$ is again a uniformly elliptic bounded matrix on $\R^{nd}\otimes \R^{nd}$. Thus,
\begin{eqnarray}
\label{AVANT_SCALE}
\langle(\tilde \H^{s,\x}(t,\sigma)-\tilde \H^{\sigma,\bxi}(t,\sigma))(\y-\btheta_{t,s}(\x)),\y-\btheta_{t,s}(\x) \rangle\nonumber\\
=\langle ( {\bar \H}^{t,\sigma,(s,\x)}_1- {\bar \H}^{t,\sigma,(\sigma,\bxi)}_1)( (\sigma-t)^{-1/2}\T_{\sigma-t}(\y-\btheta_{t,s}(\x)) ),(\sigma-t)^{-1/2}\T_{\sigma-t}(\y-\btheta_{t,s}(\x)) \rangle.\nonumber\\
\end{eqnarray}
We now want to control the difference $ ( {\bar \H}^{t,\sigma,(s,\x)}_1- {\bar \H}^{t,\sigma,(\sigma,\bxi)}_1)$ in \eqref{AVANT_SCALE}. 
From the definitions in \eqref{DEF_BARH} and \eqref{DEF_BARH_2}:
\begin{eqnarray}
 {\bar \H}^{t,\sigma,(s,\x)}_1- {\tilde \H}^{t,\sigma,(\sigma,\bxi)}_1
&=&(\sigma-t)^{-1}\int_t^\sigma \left\{ \bar \gR_{\frac{t-u}{\sigma-t}}^{t,\sigma,(s,\x)}B\varsigma(u)B^*  [\bar \gR_{\frac{t-u}{\sigma-t}}^{t,\sigma,(s,\x)}]^*\right.\nonumber\\
&&-\left.\bar \gR_{\frac{t-u}{\sigma-t}}^{t,\sigma,(\sigma,\bxi)}B\varsigma(u)B^*[ \bar \gR_{\frac{t-u}{\sigma-t}}^{t,\sigma,(\sigma,\bxi)}]^* \right\}du,\nonumber\\
| {\bar \H}^{t,\sigma,(s,\x)}_1- {\bar \H}^{t,\sigma,(\sigma,\bxi)}_1|&\le & C(\sigma-t)^{-1}\int_t^\sigma \left|\bar \gR_{\frac{t-u}{\sigma-t}}^{t,\sigma,(s,\x)}-\bar \gR_{\frac{t-u}{\sigma-t}}^{t,\sigma,(\sigma,\bxi)}\right| du .\nonumber\\ \label{DELTA_BAR_H}
\end{eqnarray}
Let us now write, still from Lemma \ref{scaling_lemma} and \eqref{DYN_RES}:
\begin{eqnarray*}
|\bar \gR_{\frac{t-u}{\sigma-t}}^{t,\sigma,(s,\x)}-\bar \gR_{\frac{t-u}{\sigma-t}}^{t,\sigma,(\sigma,\bxi)}|=|\T_{\sigma-t}^{-1}(\tilde \gR^{s,\x}(t,u)-\tilde \gR^{\sigma,\bxi}(t,u))\T_{\sigma-t}|=\\
\biggl|\T_{\sigma-t}^{-1}\int_t^u \biggl\{\tilde \gR^{s,\x}(t,v)D\gF(v,\btheta_{v,s}(\x))-\tilde \gR^{\sigma,\bxi}(t,v)D\gF(v,\btheta_{v,\sigma}(\bxi))\biggr\} dv\T_{\sigma-t}\biggr|\\
\le \biggl|\int_t^u (\bar \gR_{\frac{t-v}{\sigma-t}}^{t,\sigma,(s,\x)}-\bar \gR_{\frac{t-v}{\sigma-t}}^{t,\sigma,(\sigma,\bxi)})\left\{ \T_{\sigma-t}^{-1}D\gF(v,\btheta_{v,s}(\x))  \T_{\sigma-t}\right\}dv\\+
\int_t^u \bar \gR_{\frac{t-v}{\sigma-t}}^{t,\sigma,(\sigma,\bxi)} \T_{\sigma-t}^{-1}(D\gF(v,\btheta_{v,s}(\x))-D\gF(v,\btheta_{v,\sigma}(\bxi))) \T_{\sigma-t}  dv\biggr|\le C |\btheta_{\sigma,s}(\x)-\bxi|^\eta,
\end{eqnarray*}
where $C:=C(\A{A}) $, using the smoothness conditions assumed in \A{S}, the subdiagonal structure of $D\gF$ (see eq. \eqref{eq:F:240409:3}), the Liscphitz property of the flow and Gronwall's Lemma for the last inequality.
From the above equation and \eqref{DELTA_BAR_H}, \eqref{AVANT_SCALE}, we thus derive:
\begin{equation}
\label{CTR_COV_SCALEE}
\begin{split}
&\langle(\tilde \H^{s,\x}(t,\sigma)-\tilde \H^{\sigma,\bxi}(t,\sigma))(\y-\btheta_{t,s}(\x)),\y-\btheta_{t,s}(\x) \rangle\\
&\le C|\btheta_{\sigma,s}(\x)-\bxi|^\eta |(\sigma-t)^{-1/2}\T_{\sigma-t}(\y-\btheta_{t,s}(\x))|^2,\ C:=C(\A{A}). 
\end{split}
\end{equation}
Because of the non-degeneracy of $c$, the inverse matrices $ ( {\bar \H}^{t,\sigma,(s,\x)}_1)^{-1},({\bar \H}^{t,\sigma,(\sigma,\bxi)}_1)^{-1}$ 
have the same spatial H\"older regularity. Indeed, up to a change of coordinates one can assume that one of the two matrices is diagonal at the considered point and that the other has dominant diagonal if $|\btheta_{\sigma,s}(\x)-\bxi| $ is small enough (depending on the ellipticity bounds in \A{A} and the dimension). This reduces to the scalar case. Hence,
\begin{eqnarray}
\label{CTR_INV_COV_SCALEE}
\langle ((\tilde \H^{s,\x}(t,\sigma))^{-1}-(\tilde \H^{\sigma,\bxi}(t,\sigma))^{-1})(\y-\btheta_{t,s}(\x)),\y-\btheta_{t,s}(\x) \rangle\notag\\
=\langle ( ( {\bar \H}^{t,\sigma,(s,\x)}_1)^{-1}-( {\bar \H}^{t,\sigma,(\sigma,\bxi)}_1)^{-1})( (\sigma-t)^{1/2}\T_{\sigma-t}^{-1}(\y-\btheta_{t,s}(\x)) ),(\sigma-t)^{1/2}\T_{\sigma-t}^{-1}(\y-\btheta_{t,s}(\x)) \rangle\notag\\
\le C |\btheta_{\sigma,s}(\x)-\bxi|^\eta |(\sigma-t)^{1/2}\T_{\sigma-t}^{-1}(\y-\btheta_{t,s}(\x))|^2.
\end{eqnarray}
The difference of the determinants can be investigated similarly.
 We therefore derive:
 \begin{eqnarray*}
|\{D_{i,j}^{d}((s,\x),(\sigma,\bxi),(t,\y))\}_2|\le C \frac{{\mathbf d}((s,\x),(\sigma,\bxi))^\eta}{|t-\sigma|}\bar q_c(\sigma,t,\btheta_{t,s}(\x)-\y).
 \end{eqnarray*}
which can be analyzed similarly to $I_2$ (see equation \eqref{DECOUP_INTEGRAND} and page \pageref{I}) in the previous proof and yields:
 \begin{equation}
 \label{CT_RESOLV}
|\{D_{i,j}^{d}((s,\x),(\sigma,\bxi),(t,\y))\}_2|\le C \frac{{\mathbf d}((s,\x),(\sigma,\bxi))^\eta}{{\mathbf d}((s,\x),(t,\y))^{2+n^2 d}}\le C \frac{{\mathbf d}((s,\x),(\sigma,\bxi))^\eta}{{\mathbf d}((s,\x),(t,\y))^{2+\eta+n^2d}},
 \end{equation}
 recalling that ${\mathbf d}((s,\x),(t,\y)\le \Lambda\le 1$. This gives points \textit{ii)} and \textit{iii)}.
The cancellation property \textit{iv)} is the more subtle to derive. Let us first prove:
\begin{eqnarray*}
\sup_{\epsilon>0}|\int_{{\mathbf d}((t,\y),(s,\x))>\epsilon} k_{i,j}^{d}(s,t,\x,\y)dtd\y|
<+\infty.
\end{eqnarray*}
Write with the notation of \eqref{DEF_CHECK_K}:
\begin{eqnarray*}
\int_{{\mathbf d}((t,\y),(s,\x))>\epsilon} k_{i,j}^{d}(s,t,\x,\y)dtd\y
=\int_{  \rho(t-s,\x-\btheta_{s,t}(\y))\in ( \epsilon,\delta)}\check k_{i,j}^{t,\y,t,s}(\x-\btheta_{s,t}(\y))dtd\y\\
+\int_{ \rho(t-s,\x-\btheta_{s,t}(\y))\in ( \delta,2\delta)}\check k_{i,j}^{t,\y,t,s}(\x-\btheta_{s,t}(\y))dtd\y=:O_1^\epsilon+O_2^\epsilon.
\end{eqnarray*}
Recall now from Section \ref{SUBS_EST_CZ} that the kernel involves a cut-off that localizes the singularities. Hence, it is easily seen from \eqref{CTR_HD} and the computations following that equation that $|O_2^\epsilon|\le C:=C(T,\A{A},\delta)  $. Let us now focus on $O_1^\epsilon $.
Set $\z:=\x-\btheta_{s,t}(\y) $ that yields $d\z=\det({\rm Jac}_{\btheta_{s,t}(\y)}) d\y $ where for $T$ small enough $\det({\rm Jac}_{\btheta_{s,t}(\y)})=1+O(|t-s|) $. 
We thus derive:
\begin{eqnarray}
O_1^\epsilon=\int_{ \rho(t-s,\z)\in (\epsilon,\delta)} \exp(-\frac12\langle (\tilde \H^{t,\btheta_{t,s}(\x-\z)}(s,t))^{-1}\z,\z\rangle )\label{DEF_O1_EPS}\\
\times  P_{i,j}((\tilde \H^{t,\btheta_{t,s}(\x-\z)}(s,t))^{-1},\z)\frac{1}{(2\pi)^{nd/2}\det(\tilde \H^{t,\btheta_{t,s}(\x-\z)}(s,t))^{1/2}} dt d\z+O(1),\nonumber
\end{eqnarray} 
denoting for $({\mathbf A},\z)\in \R^{nd}\otimes \R^{nd}\times \R^{nd},\ P_{i,j}({\mathbf A},\z):=\left\{[{\mathbf A} ]_{1,1}+[{\mathbf A}\z]_{1}^{\otimes 2}\right\}_{ij} $.

To conclude the analysis we need an additional regularization of the drift. To this end, we now introduce for given $((s,\x),(t,\z))\in S :$
\begin{equation}
\begin{split}
q^{s,t,\x,(t-s)*}(\z):=\left\{ -[\tilde \H^{t,\btheta_{t,s}(\x-\z),(t-s)*}(s,t)^{-1}]_{1,1}+[\tilde \H^{t,\btheta_{t,s}(\x-\z),{(t-s)*}}(s,t)^{-1} \z]_1^{\otimes 2}\right\}\\
\frac{1}{(2\pi)^{nd/2}\det(\tilde \H^{t,\btheta_{t,s}(\x-\z),{(t-s)*}}(s,t))^{1/2}}\times \exp\left(-\frac 12 \langle \tilde \H^{t,\btheta_{t,s}(\x-\z),(t-s)*}(s,t)^{-1}\z,\z\rangle \right), 
\end{split}
\label{DEF_NOY_REG}
\end{equation}
where 
\begin{equation*}
\begin{split}
 \tilde \H^{t,\btheta_{t,s}(\x-\z),{(t-s)*}}(s,t)& :=
 \int_s^t  \tilde \gR^{t,\btheta_{t,s}(\x-\z),(t-s)*}_{s,u}B\varsigma(u)B^*[\tilde \gR^{t,\btheta_{t,s}(\x-\z),(t-s)*}_{s,u}]^* du,\\
 \partial_u \tilde \gR^{t,\btheta_{t,s}(\x-\z),(t-s)*}_{s,u}
 &=
 -\tilde \gR^{t,\btheta_{t,s}(\x-\z),(t-s)*}_{s,u} D\gF^{(t-s)*}(u,\btheta_{u,s}(\x-\z)),\ u\in [s,t],\\
\tilde  \gR^{t,\btheta_{t,s}(\x-\z),(t-s)*}_{s,s}&=I_{nd\times nd},\\ 
 D\gF^{(t-s)*}(u,\btheta_{u,s}(\x-\z))
& :=
 D\gF(u,\btheta_{u,s}(\cdot))* \zeta_{t-s}(\x-\z),\\
 \end{split}
\end{equation*}
where the last $*$ stands for the spatial convolution and $\zeta_{t-s}:\R^{nd}\rightarrow [0,1] $ is a smooth mollifyer s.t. for $\y\in \R^{nd} $, $\zeta_{t-s}(\y)=1$ if $|\y|\le |t-s|^{1/8} $ and $0$ if $|\y|\ge 2|t-s|^{1/8} $. Hence, there exists  $C>0$ s.t. $|{\mathbf D}_{\y_i} \zeta(\y)|\le C|t-s|^{-1/8},\forall i\in \leftB 1,d\rightB,\ |{\mathbf D}_{\y_i\y_j}^2 \zeta(\y)|\le C|t-s|^{-1/4},\ \forall (i,j)\in \leftB 1, d\rightB^2$.

Under \A{A}, one easily gets that the \textit{mollified} matrix $\tilde \H^{t,\btheta_{t,s}(\x-\z),{(t-s)*}}(s,t) $ satisfies the good scaling property \eqref{GSP}. Computations similar to those leading from \eqref{AVANT_SCALE} to \eqref{CTR_COV_SCALEE}, \eqref{CTR_INV_COV_SCALEE} also yield for all $\z\in \R^{nd} $:
\begin{eqnarray}
|\langle [ (\tilde \H^{t,\btheta_{t,s}(\x-\z),{(t-s)*}}(s,t))^{-1}- (\tilde \H^{t,\btheta_{t,s}(\x-\z)}(s,t))^{-1}] \z,\z\rangle|
\nonumber \\
\le C |t-s|^{\eta/8}|(t-s)^{1/2}\T_{t-s}^{-1}\z|^2,\label{CTR_DIFF_REG}\\
\langle [{\mathbf D}_{\z_1^i}(\tilde \H^{t,\btheta_{t,s}(\x-\z),{(t-s)*}}(s,t))^{-1}+{\mathbf D}_{\z_1^i\z_1^j}^2(\tilde \H^{t,\btheta_{t,s}(\x-\z),{(t-s)*}}(s,t))^{-1}]\z ,\z\rangle \nonumber \\
\le C |t-s|^{-1/4}|(t-s)^{1/2}\T_{t-s}^{-1}\z|^2, \ \forall (i,j)\in \leftB 1,d\rightB^2 \label{CTR_DIFF_SENSI}.
\end{eqnarray}
Similar controls would hold for the difference and sensitivities of the determinants.
Precisely, we observe that the sensitivities of the \textit{mollified} covariance matrices w.r.t. the freezing parameters induce additional integrable singularities.
Write then from \eqref{DEF_O1_EPS}, \eqref{DEF_NOY_REG}:
\begin{eqnarray*}
O_1^\epsilon=\int_{  \rho(t-s,\x-\btheta_{s,t}(\y))\in ( \epsilon,\delta)} [q^{s,t,\x,0}(\z)-q^{s,t,\x,(t-s)*}(\z)]dt d\z\\
+\int_{  \rho(t-s,\x-\btheta_{s,t}(\y))\in ( \epsilon,\delta)}q^{s,t,\x,(t-s)*}(\z)+O(1)
=:O_{11}^\epsilon+O_{12}^\epsilon+O(1),
\end{eqnarray*}
denoting with a slight abuse of notation by $q^{s,t,\x,0}(\z) $ the term in \eqref{DEF_NOY_REG} when there is no convolution.
From the definition of $q^{s,t,\x,\cdot}$ in \eqref{DEF_NOY_REG}, the good scaling property property \eqref{GSP} satisfied by both $\tilde \H^{t,\btheta_{t,s}(\x-\z)}, \tilde \H^{t,\btheta_{t,s}(\x-\z),*(t-s)}$ and equation \eqref{CTR_DIFF_REG},  we get that the contribution $O_{11}^\epsilon=O(1)$. Indeed, the singularities are integrable for that term. On the other hand
\begin{eqnarray*}
q^{s,t,\x,(t-s)*}(\z)\\
=D_{\z_1^i\z_1^j}^2\biggl\{\frac{\exp(-\frac12\langle (\tilde \H^{t,\btheta_{t,s}(\x-\z),*(t-s)}(s,t))^{-1}\z,\z\rangle )}{(2\pi)^{nd}\det(\tilde \H^{t,\btheta_{t,s}(\x-\z),*(t-s)}(s,t))^{1/2}}\biggr\}+{\mathcal R}_{i,j}(s,t,\x,\z).
\end{eqnarray*}
The remainder term ${\mathcal R}_{i,j} $ gathers the contributions deriving from the sensitivities of $(\tilde \H^{t,\btheta_{t,s}(\x-\z),(t-s)*}(s,t))^{-1} $ w.r.t. $\z_1^i,\z_1^j $. Actually, the regularization of the coefficient $D\gF$ is  just required here to differentiate the dynamics of the resolvent.
From \eqref{CTR_DIFF_SENSI}  it can be checked that:
\begin{eqnarray*}
|{\mathcal R}_{i,j}(s,t,\x,\z)|\le \frac{C}{(t-s)^{n^2d/2+3/4}}\exp(-(t-s)|\T_{t-s}^{-1}\z|^2),\ C:=C(\A{A},T).
\end{eqnarray*} 
We thus write
\begin{eqnarray*}
O_{12}^\epsilon&=&\int_{ \rho(t-s,\z)\in (\epsilon,\delta)}D_{\z_1^i\z_1^j}^2\biggl\{ q_{\H,*}(s,t,\z)\biggr\}dt d\z+O(1),\\
q_{\H,*}(s,t,\z)&:=&\frac{\exp(-\frac12\langle (\tilde \H^{t,\btheta_{t,s}(\x-\z),(t-s)*}(s,t))^{-1}\z,\z\rangle )}{(2\pi)^{nd/2}\det(\tilde \H^{t,\btheta_{t,s}(\x-\z),(t-s)*}(s,t))^{1/2}}.
\end{eqnarray*}
By the divergence theorem:
\begin{eqnarray*}
|O_{12}^\epsilon|\le \sum_{\beta\in\{\epsilon,\delta \}}\biggl | \int_{\rho(t-s, \z)= \beta}\partial_{\z_1^i}q_{\H,*}(s,t,\z) n_jd\nu
((t-s), \z)\biggr|.
\end{eqnarray*}
Now, from the metric homogeneity (see Remark \ref{Homogeneity}), the good scaling property \eqref{GSP} that is valid for $\tilde \H^{t,\btheta_{t,s}(\x-\z),(t-s)*} $, it can be shown, changing variables similarly to \eqref{DEF_J1}, that for $\epsilon $ small enough:
\begin{equation}
|O_{12}^\epsilon|\le C\int_{\rho(|\tilde s|,\bar \z)=1} \frac{d\nu(\tilde s,\bar \z)}{|\tilde s|^{n^2 d/2+1/2}}\exp\left(-C|\tilde s||\T_{|\tilde s|}^{-1}\bar \z |^2\right)<+\infty, \label{CTR_M_C}
\end{equation}
and that $O_{12}^\epsilon$ admits a limit when $\epsilon\rightarrow 0 $. 
From \eqref{CTR_M_C} 
we thus get point \textit{iv)} for the kernel $ k_{i,j}^{d}$.
The proof of the cancellation property for the adjoint kernel can be proved similarly exploiting the equivalence of the ``forward" and ``backward" distance on compact sets (see Proposition \ref{PROP_QM}).  \hfill $\square$

\bibliographystyle{alpha}
\bibliography{bibli}

\end{document}